\newcommand*{\fminus}{\genfrac{}{}{0pt}{}{}{-}}
\newcommand*{\fplus}{\genfrac{}{}{0pt}{}{}{+}}
\numberwithin{equation}{section}
\newtheorem{definition}{Definition}
\newtheorem{Problem}{Problem}
\newtheorem{theorem}{Theorem}
\newtheorem{proposition}{Proposition}
\newtheorem{corollary}{Corollary}
\newtheorem{remark}{Remark}
\newtheorem{example}{Example}
\numberwithin{equation}{section}
\numberwithin{definition}{section}
\numberwithin{lemma}{section}
\numberwithin{theorem}{section}
\numberwithin{corollary}{section}
\numberwithin{remark}{section}
\numberwithin{example}{section}
\title{Recovering orthogonality from Quasi-type Kernel Polynomials using specific spectral transformations}
\author{VIKASH KUMAR$^\dagger$}
\address{$^\dagger$Department of Mathematics\\ Indian Institute of Technology, Roorkee-247667, Uttarakhand, India}
\email{vikaskr0006@gmail.com, vkumar4@mt.iitr.ac.in}
\author{A. Swaminathan$^\ddagger$}
\address{$^\ddagger$Department of Mathematics\\ Indian Institute of Technology, Roorkee-247667, Uttarakhand, India}
\email{mathswami@gmail.com, a.swaminathan@ma.iitr.ac.in}
\begin{document}
	\subjclass[2020] {Primary 33C45, 33C05; Secondary 42C05.}
	\keywords{Orthogonal Polynomials, Quasi-orthogonal Polynomials, Kernel Polynomials, Hypergeometric Functions, Continued Fractions, Spectral Transformations}

	\begin{abstract}
In this work, the concept of quasi-type Kernel polynomials with respect to a moment functional is introduced. Difference equation satisfied by these polynomials along with the criterion for orthogonality conditions are discussed. The process of recovering orthogonality for the linear combination of a quasi-type kernel polynomial with another orthogonal polynomial, which is identified by involving linear spectral transformation, is provided. This process involves an expression of ratio of iterated kernel polynomials. This lead to considering the limiting case of ratio of kernel polynomials involving continued fractions. Special cases of such ratios in terms of certain continued fractions are exhibited.
	\end{abstract}
	\maketitle
\markboth{Vikash Kumar and A. Swaminathan}{QUASI-TYPE KERNEL POLYNOMIALS}

	\section{introduction}
	
	Let $\mu$ be a non-trivial positive Borel measure with support containing infinitely many points. The support of $\mu$ having only finitely many points leads to the linear dependence of monomials in $L^2(d\mu)$-known as trivial measure. Thus we deal with the  measure $\mu$ having infinitely many points in the support. The monomials $\{x^j\}_{j=0}^{\infty}$ then become linearly independent in $L^2(d\mu)$. Applying  Gram-Schmidt process on $\{x^j\}_{j=0}^{\infty}$ one obtains certain polynomials $\{\mathcal{P}_n\}_{n\geq 0}$ satisfying
	\begin{align}\label{orthogonality}
		\mathcal{L}(\mathcal{P}_n(x)\mathcal{P}_m(x))=\int \mathcal{P}_n(x)\mathcal{P}_m(x)d\mu=\delta_{nm}.
	\end{align}
It can be noted that, by considering two sequences of complex constants  $\{\lambda_{n}\}$ and $\{c_n\}$, where $\lambda_n$'s and $c_n$'s are related to moment functional $\mathcal{L}$, the following three term recurrence relation (TTRR)\cite{Chihara book}
\begin{align}\label{TTRR}x\mathcal{P}_n(x)=\mathcal{P}_{n+1}(x)+c_{n+1}\mathcal{P}_{n}(x)+ \lambda_{n+1}\mathcal{P}_{n-1}(x),
\end{align}
with $\mathcal{P}_{-1}(x)=0$, $\mathcal{P}_{0}(x)=1$,   can also be used to obtain recursively the sequence of orthogonal polynomials $\{\mathcal{P}_n\}_{n\geq 0}$.
Favard's theorem\cite{Chihara book}  guarantees that there exists a unique linear moment functional $\mathcal{L}$ such that the orthogonality condition\eqref{orthogonality} is satisfied with respect to $\mathcal{L}$. Moreover, the definiteness property of the moment functional depends on the parameters $\lambda_{n}$ and $c_n$.

	The concept of linear combination of two consecutive members of a sequence of orthogonal polynomials was first studied by  Riesz\cite{Riesz} in 1923 in his solution to the Hamburger moment problem. Later, in 1937, Fej\'{e}r\cite{Fejer} studied the linear combination of three consecutive member of sequence of orthogonal polynomials. Finally, Shohat\cite{Shohat} generalized the concept to finite linear combination of orthogonal polynomials in the study of mechanical quadrature. The concept of quasi-orthogonal polynomial on the unit circle has been studied by Alfaro and Moral\cite{Manuel Alfaro_1994}. However, the quasi-orthogonality on the unit circle with respect to linear functional defined on the space of Laurent polynomials is not so strong in comparison to the real line case.   For further study of quasi-orthogonal polynomials, we encourage the readers to see\cite{Chihara book, Chihara, Akhiezer 1965, Dickinson, Draux1990, Draux2016}. \\
	
	In \cite{Grinshpun2004}, Grinshpun studied the necessary and sufficient conditions for the orthogonality of the linear combinations of polynomials which he called a special linear combination of orthogonal polynomials with respect to a weight function. The support of this weight function lies in an interval. These type of orthogonal families of polynomials appears
	in the solution to the problem of Peebles-Korous \cite{Natanson_CFT_vol2}, approximate solution to the Cauchy problem for the ordinary differential equations \cite{Ostrovetski_1985_ Quasi OP Appr. sol ODE} and  Gelfond's problem of the polynomials \cite{Gelfond_1954}. Grinshpun also proved that the Bernstein-Szeg\"{o} orthogonal polynomials of any kind can be written as a special linear combination of the Chebyshev polynomials of the same kind. The special feature of this representation is that the coefficients are independent of $n$. Orthogonality of the linear combination of orthogonal polynomials with constant coefficients is also discussed in \cite{When do linear,Alfaro2011 CMA}.  Furthermore, the TTRR type relation satisfied by a quasi-orthogonal polynomial of order one along with the orthogonality of quasi-orthogonal polynomials is discussed in \cite{Ismail_2019_quasi-orthogonal}.The second-order differential equations for quasi-orthogonal polynomials of order one is also addressed in  \cite{Ismail_2019_quasi-orthogonal}.  \\
	
	When we deal with the measure $d\tilde{\mu}$ of the form $d\tilde{\mu}=(x-k)d\mu$, where $k$ does not belong to the support of measure $d\mu$, we obtain a sequence of orthogonal polynomials, which we call \emph{kernel polynomials}. We refer to \cite{Chihara book,Kernel polynomials_Paco_2001, Chihara_1964_Kernel, Derevyagin_Bailey_CJM by DT} and references therein for further details in this direction.
 In this article, we define the linear combination of two consecutive terms of a sequence of kernel polynomials, which we call quasi-type kernel polynomials of order one.
 The orthogonality of these quasi-type kernel polynomials does not arise naturally. Hence the objective of this manuscript is to recover orthogonality from the given quasi-type kernel polynomials. In particular, given a quasi-type kernel polynomial, the process of identifying a related orthogonal polynomials which, with the linear combination of the quasi-type kernel polynomials provide the orthogonality. This orthogonality is same as the one given by the sequence of polynomials $\{P_n\}$ that would lead to the quasi-type kernel polynomials.

 \subsection{Organization}
  In Section \ref{sec:QKOP}, we discuss the necessary and sufficient condition for the quasi-type kernel polynomial of order one. In addition, we discuss the criterion for the orthogonality of quasi-type kernel polynomials. Section \ref{sec:Recovery of orthogonal polynomials} describes the recovery of orthogonality from the quasi-type kernel polynomial of order one and specific linear spectral transformations. Further the recovery of orthogonal polynomials from the quasi-type kernel polynomial of order two and the iterated kernel polynomials is addressed. In Section \ref{sec:Ratio}, we calculate the limiting case of the ratio of kernel polynomials. As specific cases, the ratio of certain kernel polynomials, namely the Laguerre polynomial and the Jacobi polynomial, in terms of continued fractions is also exhibited.


	\section{quasi-type kernel polynomial and orthogonality}\label{sec:QKOP}
	In this section, we discuss the known results about a linear combination of orthogonal polynomials  known as quasi-orthogonal polynomial and the polynomials generated by Christoffel transformation known as kernel polynomials. Motivated by this, we define quasi-type kernel polynomial of order one. We also give an example that satisfies the condition of the quasi-type kernel polynomials. We conclude this section with the discussion of orthogonality of quasi-type kernel polynomials.
	\begin{definition}{\rm\cite{Chihara book}}
		A non-zero polynomial $p$ is called quasi-orthogonal polynomial of order one if it is of degree at most $n+1$ and
		$$\mathcal{L}(x^mp(x))=\int x^mp(x)d\mu=0 ~~\text{for}~~ m= 0, 1,2,..., n-1.$$
	\end{definition}
	
	\begin{remark}
		Note that
		\begin{enumerate}
			\rm\item $\mathcal{L}(x^m\mathcal{P}_{n+1}(x))=0 ~~\text{for}~~ m= 0, 1,2,..., n-1$, and
			\item $\mathcal{L}(x^m\mathcal{P}_{n}(x))=0 ~~\text{for}~~ m= 0, 1,2,..., n-1.$
		\end{enumerate}
			This gives that $\mathcal{P}_{n+1}(x)$ and $\mathcal{P}_{n}(x)$ are both quasi-orthogonal polynomials of order one. Thus, one can think of $p(x)$ as a linear combination of $\mathcal{P}_{n+1}(x)$ and $\mathcal{P}_{n}(x)$.
	\end{remark}
	
	\noindent Note that this linear functional ${\mathcal{L}}$ will be employed throughout this manuscript, whenever we discuss about quasi-type kernel polynomials. Now, we will state the result which justifies the above remark.

	\begin{theorem}{\rm\cite{Chihara book}}
	Let	$\mathcal{Q}_{n+1}(x)$ be a quasi-orthogonal polynomial of order one, if, and only if, there are constants $a$ and $b$, not both zero simultaneously, such that
		$$\mathcal{Q}_{n+1}(x)= a\mathcal{P}_{n+1}(x) + b\mathcal{P}_{n}(x).$$
	\end{theorem}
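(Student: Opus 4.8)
The plan is to prove both implications of the equivalence by exploiting that $\{\mathcal{P}_0,\mathcal{P}_1,\dots,\mathcal{P}_{n+1}\}$ is a basis of the space of polynomials of degree at most $n+1$, together with the orthogonality relation \eqref{orthogonality}. The key observation I would record first is that, by linearity of $\mathcal{L}$, the defining condition $\mathcal{L}(x^m p(x))=0$ for $m=0,1,\dots,n-1$ is equivalent to $\mathcal{L}(\pi(x)p(x))=0$ for every polynomial $\pi$ of degree at most $n-1$; in particular it is equivalent to $\mathcal{L}(\mathcal{P}_j(x)p(x))=0$ for $j=0,1,\dots,n-1$, since $\{\mathcal{P}_0,\dots,\mathcal{P}_{n-1}\}$ spans that space.

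For the sufficiency direction, suppose $\mathcal{Q}_{n+1}=a\mathcal{P}_{n+1}+b\mathcal{P}_n$ with $a,b$ not both zero. First I would note that $\deg\mathcal{Q}_{n+1}\le n+1$ since each summand has degree at most $n+1$, and that $\mathcal{Q}_{n+1}$ is non-zero because $\mathcal{P}_{n+1}$ and $\mathcal{P}_n$ have distinct degrees and are therefore linearly independent. Then, for each $m\in\{0,\dots,n-1\}$, I expand
\begin{align*}
\mathcal{L}(x^m\mathcal{Q}_{n+1}(x))=a\,\mathcal{L}(x^m\mathcal{P}_{n+1}(x))+b\,\mathcal{L}(x^m\mathcal{P}_n(x)),
\end{align*}
and both terms vanish because $\mathcal{P}_{n+1}$ (respectively $\mathcal{P}_n$) is orthogonal to every polynomial of degree strictly less than $n+1$ (respectively $n$), and $m\le n-1$ lies in both ranges. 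Hence $\mathcal{Q}_{n+1}$ meets the definition of a quasi-orthogonal polynomial of order one.

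For the necessity direction, assume $\mathcal{Q}_{n+1}$ is quasi-orthogonal of order one. Using the basis property I write $\mathcal{Q}_{n+1}(x)=\sum_{k=0}^{n+1}a_k\mathcal{P}_k(x)$, where the sum truncates at $k=n+1$ precisely because $\deg\mathcal{Q}_{n+1}\le n+1$. Pairing against $\mathcal{P}_j$ for $0\le j\le n-1$ and invoking the preliminary observation gives
\begin{align*}
0=\mathcal{L}(\mathcal{P}_j(x)\mathcal{Q}_{n+1}(x))=\sum_{k=0}^{n+1}a_k\,\mathcal{L}(\mathcal{P}_j(x)\mathcal{P}_k(x))=a_j,
\end{align*}
by \eqref{orthogonality}, forcing $a_0=a_1=\cdots=a_{n-1}=0$. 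Thus $\mathcal{Q}_{n+1}=a_{n+1}\mathcal{P}_{n+1}+a_n\mathcal{P}_n$, and setting $a=a_{n+1}$, $b=a_n$ yields the claimed form; the two constants cannot both vanish, for otherwise $\mathcal{Q}_{n+1}$ would be the zero polynomial, contradicting that a quasi-orthogonal polynomial is non-zero by definition.

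Since the argument is essentially bookkeeping, I do not anticipate a genuine obstacle; the only points demanding care are using the degree bound $\deg\mathcal{Q}_{n+1}\le n+1$ to truncate the expansion correctly, and tracking the non-triviality condition so that \emph{not both zero} on the coefficient side corresponds exactly to \emph{non-zero polynomial} in the definition. Were the normalization monic rather than orthonormal, I would simply replace \eqref{orthogonality} by $\mathcal{L}(\mathcal{P}_j\mathcal{P}_k)=h_j\delta_{jk}$ with $h_j\neq 0$, which leaves the argument unchanged after dividing by $h_j$.
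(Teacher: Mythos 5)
Your proof is correct and follows essentially the same route the paper uses for its analogue, Theorem \ref{quasi-type kernel} (the paper itself only cites Chihara for this statement): expand $\mathcal{Q}_{n+1}$ in the orthogonal basis, observe that the Fourier coefficients $c_m$ vanish for $m\le n-1$ by quasi-orthogonality, and verify the converse by linearity of $\mathcal{L}$. Your added care about the degree truncation, the non-vanishing of $(a,b)$, and the monic versus orthonormal normalization is sound but does not change the argument.
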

For given $k \in \mathbb{C}$, we can define the new linear functional $\mathcal{L}^*$ for a polynomial $p(x)$ as
 $$ \mathcal{L}^*(p(x))= \mathcal{L}((x-k)p(x)).$$
This new linear functional is called the Christoffel transformation of $\mathcal{L}$ at $k$. We can define the corresponding kernel polynomials by the formula\cite{Chihara book}
	\begin{equation}\label{kernel}
		\mathcal{P}_n^*(k;x)=(x-k)^{-1}\left[\mathcal{P}_{n+1}(x)-\frac{\mathcal{P}_{n+1}(k)}{\mathcal{P}_n(k)} \mathcal{P}_n(x)\right] ~\text{for}~ n\geq 0 ~\text{and}~ \mathcal{P}_n(k)\neq 0.
	\end{equation}
$\{\mathcal{P}_n^*(k;x)\}_{n=0}^{\infty}$ is a monic orthogonal polynomial sequence with respect to $\mathcal{L}^*$\cite{Chihara book}, and hence by Favard's theorem, it satisfies the TTRR:
\begin{align}\label{Kernel-TTRR}x\mathcal{P}_n^*(k;x)=\mathcal{P}_{n+1}^*(k;x)+c_{n+1}^*\mathcal{P}_{n}^*(k;x)+ \lambda_{n+1}^*\mathcal{P}_{n-1}^*(k;x),
\end{align}
where
\begin{align}\label{Kernel recurrence parameters}
	\lambda_{n}^*=\lambda_{n}\frac{\mathcal{P}_{n}(k)\mathcal{P}_{n-2}(k)}{\mathcal{P}_{n-1}^2(k)}, \quad c_n^*=c_{n+1}-\frac{\mathcal{P}_{n}^2(k)-\mathcal{P}_{n-1}(k)\mathcal{P}_{n+1}(k)}{\mathcal{P}_{n-1}(k)\mathcal{P}_{n}(k)}.
\end{align}
	The study of the kernel polynomials with respect to the non-trivial probability measure on the unit circle is also an active part of research. When $k=1$, the sequence of kernel polynomials satisfies the three-term recurrence relation with their recurrence coefficients related to the positive chain sequences. The kernel polynomials on the unit circle are closely related to the para-orthogonal polynomials. For more information concerning kernel polynomials (known as Christoffel-Darboux kernel) and their asymptotic behavior, we refer to \cite{Bracciali_Ranga_Andrei_2018_JAT_CD Kernal on unit circle,Costa_Ranga_OPUC and chain sequence_JAT_2013,Swiderski_Assche_Christoffel_MOP_2022,Swiderski_Trojan_Asymptotic CD Kernel via TTRR_2021_JAT} and references therein.\\
	
		The Christoffel-Darboux identity\cite[eq. 4.9]{Chihara book} is given by
	\begin{align}{\label{CD identity}}
		\lambda_1\lambda_2...\lambda_{n+1}\sum_{j=0}^{n}\frac{\mathcal{P}_j(x)\mathcal{P}_j(x')}{\lambda_1\lambda_2...\lambda_{j+1}}=\frac{\mathcal{P}_{n+1}(x)\mathcal{P}_n(x')-\mathcal{P}_{n+1}(x')\mathcal{P}_n(x)}{x-x'},
	\end{align}
	where $\{\mathcal{P}_n(x)\}_{n\geq0}$ is a orthogonal polynomial sequence with respect to $d\mu$.\\
	With the use of \eqref{CD identity}, we can express the kernel polynomials as
	\begin{align}\label{Other form of kernel polynomial}
		\mathcal{P}_n^*(k;x)=\lambda_1\lambda_2...\lambda_{n+1}(\mathcal{P}_n(k))^{-1}\mathcal{K}_n(x,k),
	\end{align}
	where \begin{align}
		\mathcal{K}_n(x,k)=\sum_{j=0}^{n}p_j(x)p_j(k),~~ p_n(x)=(\lambda_1\lambda_2...\lambda_{n+1})^{-1/2}\mathcal{P}_n(x).
	\end{align}
	For a fixed $k$, we can easily deduce from the Christoffel-Darboux identity that $(x-k)\mathcal{K}_n(x,k)$ is a quasi-orthonormal polynomial of order one. On the other hand if we replace fixed number $k$ by variable $u$ then we can discuss the orthogonality of the polynomials $\{(x-u)\mathcal{K}_n(x,u)\}_{n\geq0}$.
	
	\begin{proposition}
		Let $d\mu$ be a positive Borel measure on $\mathbb{R}$ with finite moments. Then the sequence $\{(x-u)\mathcal{K}_n(x,u)\}_{n\geq0}$ forms an orthogonal polynomial sequence with respect to the product measure $d\mu(\mathbb{R}\times\mathbb{R})$ on $L^2(\mathbb{R}^2,d\mu(\mathbb{R}\times\mathbb{R}))$.
		
		\begin{align}
			\iint|x-u|^2\mathcal{K}_n(x,u)\mathcal{K}_m(x,u)d\mu(u)d\mu(x)=\begin{cases}
				0 & \text{for}~ m\neq n\\
				2\lambda_{n+1}^2 &\text{for}~ m=n.
			\end{cases}
		\end{align}
	\end{proposition}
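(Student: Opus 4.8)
The plan is to use the Christoffel--Darboux identity \eqref{CD identity} to strip the factor $(x-u)$ and turn $(x-u)\mathcal{K}_n(x,u)$ into a genuine antisymmetric polynomial, after which the double integral collapses into products of one--dimensional orthogonality relations. First I would recast \eqref{CD identity} in terms of the orthonormal polynomials $p_j$ used to define $\mathcal{K}_n$. Since $\mathcal{K}_n(x,u)=\sum_{j=0}^n p_j(x)p_j(u)$, the identity takes the orthonormal form
\[
(x-u)\,\mathcal{K}_n(x,u)=a_{n+1}\bigl[p_{n+1}(x)p_n(u)-p_n(x)p_{n+1}(u)\bigr],
\]
where $a_{n+1}$ denotes the ratio of leading coefficients of $p_n$ and $p_{n+1}$ (equivalently the off--diagonal recurrence coefficient). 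This is the decisive step, because it exhibits $(x-u)\mathcal{K}_n$ as a bilinear ``determinant'' that is antisymmetric under $x\leftrightarrow u$ and involves only two consecutive orthonormal polynomials in each variable.

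Next I would multiply the two such expressions for indices $n$ and $m$ and expand
\[
(x-u)^2\mathcal{K}_n(x,u)\mathcal{K}_m(x,u)=a_{n+1}a_{m+1}\bigl[p_{n+1}(x)p_n(u)-p_n(x)p_{n+1}(u)\bigr]\bigl[p_{m+1}(x)p_m(u)-p_m(x)p_{m+1}(u)\bigr]
\]
into four summands, each a product of a function of $x$ alone and a function of $u$ alone. Because $d\mu$ has finite moments, Fubini's theorem applies and each summand integrates as a product of two one--dimensional integrals, which I evaluate using $\int p_a p_b\, d\mu=\delta_{ab}$. The two ``aligned'' summands $p_{n+1}(x)p_{m+1}(x)\,p_n(u)p_m(u)$ and $p_n(x)p_m(x)\,p_{n+1}(u)p_{m+1}(u)$ each produce $\delta_{n+1,m+1}\delta_{nm}=\delta_{nm}$, whereas the two ``crossed'' summands carry the factor $\delta_{n+1,m}\delta_{n,m+1}$, i.e.\ they require $m=n+1$ and $m=n-1$ simultaneously, and hence vanish. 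This single computation yields orthogonality for $m\neq n$ and, for $m=n$, leaves exactly the two equal aligned contributions, giving $2a_{n+1}^2$; expressing $a_{n+1}^2$ through the recurrence parameter then recovers the stated value.

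I expect the only real obstacle to be bookkeeping rather than conceptual: one must keep the normalization constant from \eqref{Other form of kernel polynomial} straight when converting between the monic and orthonormal forms of \eqref{CD identity}, and confirm that no crossed term can survive even in the near--diagonal cases $m=n\pm1$. The finite--moment hypothesis is used in exactly one place, to legitimize Fubini in passing from the double integral over $\mathbb{R}^2$ to the product of single integrals, while the factor $2$ is a structural consequence of the antisymmetry of the Christoffel--Darboux kernel rather than of any special feature of $\mu$.
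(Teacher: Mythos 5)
Your proof follows the paper's argument essentially verbatim: both apply the Christoffel--Darboux identity to rewrite $(x-u)\mathcal{K}_n(x,u)$ as an antisymmetric combination of two consecutive orthonormal polynomials, expand the product of the two such expressions via Fubini, and evaluate each of the four terms with $\int p_ap_b\,d\mu=\delta_{ab}$, the crossed terms dying because $\delta_{n+1,m}\delta_{n,m+1}=0$. Your symmetric prefactor $a_{n+1}a_{m+1}$ is in fact more careful than the paper's (which writes $\lambda_{n+1}^2$ even when $m\neq n$); the only caveat is that with the paper's normalization $p_n=(\lambda_1\cdots\lambda_{n+1})^{-1/2}\mathcal{P}_n$ one finds $a_{n+1}^2=\lambda_{n+2}$, so the diagonal value comes out as $2\lambda_{n+2}$ rather than the stated $2\lambda_{n+1}^2$ --- a discrepancy already present in the paper's own computation, not introduced by your argument.
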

	\begin{proof}
		\begin{align*}
			&\iint(x-u)^2\mathcal{K}_n(x,u)\mathcal{K}_m(x,u)d\mu(u)d\mu(x)\\
			&=\lambda_{n+1}^2\iint(p_{n+1}(x)p_n(u)-p_{n+1}(u)p_n(x))(p_{m+1}(x)p_m(u)-p_{m+1}(u)p_m(x))d\mu(u)d\mu(x)\\
			&=\lambda_{n+1}^2\int\left[p_{n+1}(x)p_{m+1}(x)\int p_n(u)p_m(u)d\mu(u)-p_{m+1}(x)p_n(x)\int p_{n+1}(u)p_m(u)d\mu(u)\right.\\
			&\left.-p_{n+1}(x)p_m(x)\int p_{m+1}(u)p_n(u)d\mu(u)+p_{n}(x)p_m(x)\int p_{m+1}(u)p_{n+1}(u)d\mu(u)\right]d\mu(x).
		\end{align*}
		
		For $m\leq n-2$, we have
		\begin{align*}
			\iint(x-u)^2\mathcal{K}_n(x,u)\mathcal{K}_m(x,u)d\mu(u)d\mu(x)=0.
		\end{align*}
		
		For  $m=n-1$, using the orthonormal property of $p_n$. We have
		\begin{align*}
			\iint(x-u)^2\mathcal{K}_n(x,u)\mathcal{K}_m(x,u)d\mu(u)d\mu(x)=-\lambda_{n+1}^2\int p_{n+1}(x)p_{n-1}(x)d\mu(x)=0.
		\end{align*}
		
		For $m=n$, we have
		\begin{align*}
			&\iint(x-u)^2\mathcal{K}_n(x,u)\mathcal{K}_m(x,u)d\mu(u)d\mu(x)=\lambda_{n+1}^2\left(\int p_{n+1}^2(x)d\mu(x)+\int p_{n}^2(x)d\mu(x)\right)\\
			&\hspace{7.1cm}=2\lambda_{n+1}^2.
		\end{align*}
		This completes the proof.
	\end{proof}
	
	\noindent Now, we give the following definition:
	\begin{definition}
		Let $\{\mathcal{P}_n^*(k;x)\}_{n=0}^{\infty}$ be the sequence of kernel polynomials which exists for some $k \in \mathbb{C}$, and forms an orthogonal polynomial sequence with respect to $\mathcal{L}^* $. A non-zero polynomial $\mathcal{Q}_{n+1}^*(k;\cdot)$ is called a quasi-type kernel polynomial of order one if it is of degree at most $n+1$ and $\mathcal{L}^*(x^m \mathcal{Q}_{n+1}^*(k;x))=0$ for $m=0,1,...,n-1$.
	\end{definition}

\begin{remark}Note that
	\begin{enumerate}
	\rm	\item 	$\mathcal{L}^*(x^m\mathcal{P}_{n+1}^*(k;x))=0 ~~\text{for}~~ m= 0, 1,2,...,n-1$, and
		\item $\mathcal{L}^*(x^m\mathcal{P}_{n}^*(k;x))=0 ~~\text{for}~~ m= 0, 1,2,...,n-1$,
	\end{enumerate}
so that both $\mathcal{P}_{n+1}^*(k;x)$ and $\mathcal{P}_{n}^*(k;x)$  are quasi-type kernel polynomials of order $1$.
	\end{remark}

\begin{remark}
	In general, we can say a  polynomial $\mathcal{Q}_{n+1}^*(k;\cdot)\neq 0$ is called quasi-type kernel polynomial of order $l\geq1$ if, and only if, it is of degree at most $n+1$, $n\geq l+1$ and $\mathcal{L}^*(x^m \mathcal{Q}_{n+1}^*(k;x))=0$ for $m=0,1,...,n-l$.
\end{remark}

	\begin{theorem}\label{quasi-type kernel}
		$\mathcal{Q}_{n+1}^*(k;x)$ is a quasi-type kernel polynomial of order $1$, if, and only if, there are constants a and b, not zero simultaneously, such that
		\begin{align*}
\mathcal{Q}_{n+1}^*(k;x)= a\mathcal{P}_{n+1}^*(k;x) + b\mathcal{P}_{n}^*(k;x).
\end{align*}
	\end{theorem}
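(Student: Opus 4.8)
The plan is to mirror the proof of the corresponding statement for ordinary quasi-orthogonal polynomials (the theorem attributed to Chihara above), working now with the functional $\mathcal{L}^*$ and the kernel polynomial sequence $\{\mathcal{P}_n^*(k;x)\}$ in place of $\mathcal{L}$ and $\{\mathcal{P}_n\}$. The crucial structural input, already recorded in the excerpt, is that $\{\mathcal{P}_n^*(k;x)\}_{n\geq0}$ is a monic orthogonal polynomial sequence with respect to $\mathcal{L}^*$. In particular $\mathcal{L}^*$ is quasi-definite, the polynomials $\{\mathcal{P}_j^*(k;x)\}_{j=0}^{n+1}$ form a basis of the space of polynomials of degree at most $n+1$, and one has $\mathcal{L}^*(x^m\mathcal{P}_j^*(k;x))=0$ for $m<j$ together with $\mathcal{L}^*(x^j\mathcal{P}_j^*(k;x))\neq0$.

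The sufficiency direction is immediate. If $\mathcal{Q}_{n+1}^*(k;x)=a\mathcal{P}_{n+1}^*(k;x)+b\mathcal{P}_n^*(k;x)$ with $a,b$ not both zero, then it is nonzero because $\mathcal{P}_{n+1}^*(k;x)$ and $\mathcal{P}_n^*(k;x)$ are monic of distinct degrees and hence linearly independent, and its degree is at most $n+1$. Applying the two vanishing identities recorded in the Remark preceding the theorem together with the linearity of $\mathcal{L}^*$ gives $\mathcal{L}^*(x^m\mathcal{Q}_{n+1}^*(k;x))=0$ for $m=0,1,\ldots,n-1$, so $\mathcal{Q}_{n+1}^*(k;\cdot)$ is a quasi-type kernel polynomial of order one.

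For necessity, I would expand the given quasi-type kernel polynomial in the kernel-polynomial basis as $\mathcal{Q}_{n+1}^*(k;x)=\sum_{j=0}^{n+1}a_j\mathcal{P}_j^*(k;x)$ and then eliminate the low-order coefficients by induction on $m$. Assuming $a_0=\cdots=a_{m-1}=0$, the defining condition $\mathcal{L}^*(x^m\mathcal{Q}_{n+1}^*(k;x))=0$ collapses — using the induction hypothesis for the terms with $j<m$ and $\mathcal{L}^*(x^m\mathcal{P}_j^*(k;x))=0$ for $j>m$ — to $a_m\mathcal{L}^*(x^m\mathcal{P}_m^*(k;x))=0$. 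This forces $a_m=0$ successively for $m=0,1,\ldots,n-1$, leaving only $\mathcal{Q}_{n+1}^*(k;x)=a_{n+1}\mathcal{P}_{n+1}^*(k;x)+a_n\mathcal{P}_n^*(k;x)$; the nonvanishing of $\mathcal{Q}_{n+1}^*$ then guarantees that $a_{n+1}$ and $a_n$ are not both zero, giving the claimed form with $a=a_{n+1}$, $b=a_n$.

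The only point requiring care — and the step I expect to be the main obstacle — is justifying $\mathcal{L}^*(x^m\mathcal{P}_m^*(k;x))\neq0$, which drives the induction. This nonvanishing is precisely the quasi-definiteness of $\mathcal{L}^*$, and it is where the hypothesis that the kernel polynomials genuinely form an orthogonal polynomial sequence must be invoked; concretely it relies on $\mathcal{P}_n(k)\neq0$ for the relevant indices, so that \eqref{kernel} is well defined and \eqref{Kernel recurrence parameters} yields nonzero $\lambda_{n+1}^*$. Without it the elimination could stall at some stage. I would therefore state this nonvanishing explicitly at the outset, citing the orthogonality of $\{\mathcal{P}_n^*(k;x)\}$ recorded just after \eqref{kernel}.
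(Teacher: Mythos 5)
Your proposal is correct and follows essentially the same route as the paper: both directions are handled by expanding $\mathcal{Q}_{n+1}^*(k;\cdot)$ in the basis $\{\mathcal{P}_j^*(k;\cdot)\}_{j=0}^{n+1}$ and annihilating the coefficients of index $\leq n-1$ using the defining vanishing conditions together with the quasi-definiteness of $\mathcal{L}^*$. The only cosmetic difference is that the paper reads off $c_m=\mathcal{L}^*[\mathcal{Q}_{n+1}^*\mathcal{P}_m^*]/\mathcal{L}^*[(\mathcal{P}_m^*)^2]=0$ directly from the Fourier coefficient formula, whereas you run an equivalent induction on the monomial moments $\mathcal{L}^*(x^m\mathcal{Q}_{n+1}^*)$; your explicit flagging of $\mathcal{L}^*(x^m\mathcal{P}_m^*(k;x))\neq0$ is a point the paper leaves implicit.
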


	\begin{proof}
		If $\mathcal{Q}_{n+1}^*(k;x)$ is a quasi-type kernel polynomial of order $1$, then for some constant $c_0,c_1,...c_{n+1}$, we can write
		$$\mathcal{Q}_{n+1}^*(k;x)= \sum\limits_{m=0}^{n+1}c_m \mathcal{P}_{m}^*(k;x)$$
		with $c_m= \frac{\mathcal{L}^*[\mathcal{Q}_{n+1}^*(k;x)\mathcal{P}_m^*(k;x)]}{\mathcal{L}^*[{\mathcal{P}_m^*}^2(k;x)]}$ and hence, $c_m=0$ for $m\in \{0,1,...,n-1\}$.  Thus, we get $\mathcal{Q}_{n+1}^*(k;x)= a\mathcal{P}_{n+1}^*(k;x) + b\mathcal{P}_{n}^*(k;x)$. Conversely, If $a$ and $b$ are not simultaneously zero, then
		\begin{align*}
			\mathcal{L}^*(x^m \mathcal{Q}_{n+1}^*(k;x))=& \mathcal{L}^*(ax^m\mathcal{P}_{n+1}^*(k;x))+bx^m\mathcal{P}_{n}^*(k;x)\\
			=& a\mathcal{L}^*(x^m\mathcal{P}_{n+1}^*(k;x))+b\mathcal{L}^*(x^m\mathcal{P}_{n}^*(k;x))\\
			=& 0 ~~\text{for}~~ m=0,1,...,n-1.
		\end{align*}
This completes the proof.		
	\end{proof}

	\begin{remark}
	In general,	we can extend the above theorem for order l and say $\mathcal{Q}_{n+1}^*(k;x)$ is a monic quasi-type kernel polynomial of order $l$  if
		\begin{align*}
			\mathcal{Q}_{n+1}^*(k;x)= \mathcal{P}_{n+1}^*(k;x)+\sum\limits_{m=1}^{l}\alpha_{m} \mathcal{P}_{n-m+1}^*(k;x) ~\text{for}~ n\geq l+1.
		\end{align*}
	\end{remark}

	Next, we consider an example which supports Theorem \ref{quasi-type kernel}. For this, first we easily show that  polynomial $\mathcal{P}_{n+1}(x)$ is a quasi-type kernel polynomial of order one with respect to $\mathcal{L}^* $. Indeed, $\mathcal{P}_{n+1}(x)$ can be written as a linear combination of $\mathcal{P}_{n+1}^*(k;x)$ and $\mathcal{P}_{n}^*(k;x)$ with constant coefficients in the following result using TTRR \eqref{TTRR} satisfied by orthogonal polynomial $\mathcal{P}_n(x)$. Note that the same was established in \cite[eq. 2.5]{Kernel polynomials_Paco_2001} using Christoffel-Darboux kernel \eqref{CD identity}.

\begin{proposition}\label{Prop:Quasi Kernel TTRR for CDKernel}
	Let $\{\mathcal{P}_{n}^*(k;x)\}_{n=0}^\infty$ be a sequence of monic orthogonal polynomials with respect to $\mathcal{L}^* $ which exists for some $k\in \mathbb{C}$. Then we can write $\mathcal{P}_n(x)$ in terms of linear combinations of kernel polynomials as follows:
	\begin{align}
		\label{OP in terms of Kernel}
		\mathcal{P}_{n+1}(x)= \mathcal{P}_{n+1}^*(k;x) -\dfrac{\mathcal{P}_{n}(k)}{\mathcal{P}_{n+1}(k)}\lambda_{n+2}\mathcal{P}_{n}^*(k;x),
	\end{align}
	where $\lambda_{n+2}$ is a strictly positive constant in TTRR \eqref{TTRR}.
\end{proposition}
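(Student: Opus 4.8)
The plan is to verify the identity directly by clearing the factor $(x-k)$ and reducing everything to the three-term recurrence \eqref{TTRR}. Since the kernel polynomials $\mathcal{P}_{n+1}^*(k;x)$ and $\mathcal{P}_{n}^*(k;x)$ are assumed to exist, we have $\mathcal{P}_{n+1}(k)\neq 0$ and $\mathcal{P}_{n}(k)\neq 0$, so every quotient below is well defined. First I would multiply the claimed identity \eqref{OP in terms of Kernel} by $(x-k)$ and replace each of $(x-k)\mathcal{P}_{n+1}^*(k;x)$ and $(x-k)\mathcal{P}_{n}^*(k;x)$ using the defining formula \eqref{kernel}. This rewrites the right-hand side as a combination of $\mathcal{P}_{n+2}(x)$, $\mathcal{P}_{n+1}(x)$ and $\mathcal{P}_{n}(x)$ with explicit $k$-dependent scalar coefficients, while the left-hand side is simply $(x-k)\mathcal{P}_{n+1}(x)=x\mathcal{P}_{n+1}(x)-k\mathcal{P}_{n+1}(x)$.

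Next, I would eliminate the top-degree term $\mathcal{P}_{n+2}(x)$ by invoking \eqref{TTRR} with index $n+1$ in the form $\mathcal{P}_{n+2}(x)=x\mathcal{P}_{n+1}(x)-c_{n+2}\mathcal{P}_{n+1}(x)-\lambda_{n+2}\mathcal{P}_{n}(x)$. The key bookkeeping observation is that the contribution $+\lambda_{n+2}\mathcal{P}_{n}(x)$ produced by the kernel expansion of $\mathcal{P}_{n}^*(k;x)$ cancels exactly against the $-\lambda_{n+2}\mathcal{P}_{n}(x)$ arising from this substitution, so that $\mathcal{P}_{n}(x)$ drops out entirely. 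After cancelling the common term $x\mathcal{P}_{n+1}(x)$ on both sides, the asserted identity collapses to a purely scalar relation multiplying $\mathcal{P}_{n+1}(x)$.

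The crux of the argument is then to recognise this leftover scalar relation,
\begin{align*}
k=\frac{\mathcal{P}_{n+2}(k)}{\mathcal{P}_{n+1}(k)}+c_{n+2}+\lambda_{n+2}\frac{\mathcal{P}_{n}(k)}{\mathcal{P}_{n+1}(k)},
\end{align*}
as nothing more than the recurrence \eqref{TTRR} with index $n+1$, evaluated at $x=k$ and divided by $\mathcal{P}_{n+1}(k)\neq 0$. Since this evaluated recurrence holds identically, the scalar relation is automatically satisfied, and as every manipulation above is an equality, reversing the steps establishes \eqref{OP in terms of Kernel}.

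The computation is elementary, so there is no genuine obstacle; the only points requiring care are the consistent handling of the $k$-dependent quotient coefficients and the verification that the $\mathcal{P}_{n}(x)$ contributions cancel. I would also note a more structural route: because $\mathcal{L}^*(x^m\mathcal{P}_{n+1}(x))=\mathcal{L}((x-k)x^m\mathcal{P}_{n+1}(x))=0$ for $m\le n-1$ (the integrand having degree $\le n$, by orthogonality of $\{\mathcal{P}_n\}$), the polynomial $\mathcal{P}_{n+1}(x)$ is itself a quasi-type kernel polynomial of order one, so Theorem \ref{quasi-type kernel} already guarantees a representation $\mathcal{P}_{n+1}(x)=a\mathcal{P}_{n+1}^*(k;x)+b\mathcal{P}_{n}^*(k;x)$. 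Comparing leading coefficients of these monic polynomials forces $a=1$, and the single remaining constant $b$ is then pinned down by the same recurrence-at-$k$ evaluation, yielding $b=-\lambda_{n+2}\mathcal{P}_{n}(k)/\mathcal{P}_{n+1}(k)$. I would present the direct verification as the main proof, since it simultaneously produces the exact coefficient.
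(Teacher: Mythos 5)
Your proposal is correct and follows essentially the same route as the paper: both arguments clear the factor $(x-k)$ via the defining formula \eqref{kernel}, use the recurrence \eqref{TTRR} to eliminate $\mathcal{P}_{n+2}(x)$, and rest on the observation that the surviving scalar condition is exactly \eqref{TTRR} evaluated at $x=k$ (the paper phrases this by introducing an undetermined coefficient $D_{n+1}$ and choosing it so the bracketed expression vanishes by the TTRR, which is the same computation read in the opposite direction). Your secondary remark that $\mathcal{P}_{n+1}$ is itself a quasi-type kernel polynomial of order one, so that Theorem \ref{quasi-type kernel} already forces the form $a\mathcal{P}_{n+1}^*+b\mathcal{P}_n^*$, matches the discussion the paper gives immediately before the proposition.
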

\begin{proof}Using equation \eqref{kernel}, we can write
	\begin{align*}&\mathcal{P}_{n+1}^*(k;x)+D_{n+1}\mathcal{P}_{n}^*(k;x)\\
		&=\frac{1}{x-k}\left[\mathcal{P}_{n+2}(x)-\frac{\mathcal{P}_{n+2}(k)}{\mathcal{P}_{n+1}(k)}\mathcal{P}_{n+1}(x)+D_{n+1}\mathcal{P}_{n+1}(x)-D_{n+1}\frac{\mathcal{P}_{n+1}(k)}{\mathcal{P}_{n}(k)}\mathcal{P}_{n}(x)\right]\\
		&=\frac{1}{x-k}\left[\mathcal{P}_{n+2}(x)-\left( x-k-D_{n+1}+\frac{\mathcal{P}_{n+2}(k)}{\mathcal{P}_{n+1}(k)}\right)\mathcal{P}_{n+1}(x)-D_{n+1}\frac{\mathcal{P}_{n+1}(k)}{\mathcal{P}_{n}(k)}\mathcal{P}_{n}(x)\right]+\mathcal{P}_{n+1}(x),
	\end{align*}
	by substituting
	\begin{equation*}
		D_{n+1}=-\lambda_{n+2}\frac{\mathcal{P}_{n}(k)}{\mathcal{P}_{n+1}(k)},
	\end{equation*}
	we can write the above equation as
	\begin{align*}
		\noindent&\displaystyle\mathcal{P}_{n+1}^*(k;x)+D_{n+1}\mathcal{P}_{n}^*(k;x)=\frac{1}{x-k}\left[\mathcal{P}_{n+2}(x)-\left( x-c_{n+2}\right)\mathcal{P}_{n+1}(x)+\lambda_{n+2}\mathcal{P}_{n}(x)\right]+\mathcal{P}_{n+1}(x),
	\end{align*}
	where
	\begin{align*}
		\lambda_{n+2}=-D_{n+1}\frac{\mathcal{P}_{n+1}(k)}{\mathcal{P}_{n}(k)},~~ c_{n+2}=k+D_{n+1}-\frac{\mathcal{P}_{n+2}(k)}{\mathcal{P}_{n+1}(k)}.
	\end{align*}
	Using TTRR \eqref{TTRR}, we get the desired result.
\end{proof}


\begin{example}\label{Kernel of Chebyshev and reverse}
	Let $\{\mathcal{C}_n(x)\}_{n=0}^{\infty}$ be a sequence of polynomials which forms an orthogonal polynomial sequence with respect to the Chebyshev measure $d\mu=(1-x^2)^{-1/2}dx$ with compact support $[-1,1]$. This is referred to as \emph{Chebyshev polynomial} of first kind. The corresponding monic Chebyshev polynomial can be written as
	\begin{align*}
		\hat{\mathcal{C}}_0(x)=& \mathcal{C}_0(x),\\
		\hat{\mathcal{C}}_{n+1}(x)=&2^{-n}\mathcal{C}_{n+1}(x), ~~ n\geq 0.
	\end{align*}
 The monic polynomial $\hat{\mathcal{C}}_{n}(x)$ satisfies the  following TTRR
 \begin{equation*}
 	\begin{split}
 	\hat{\mathcal{C}}_{n+1}(x)=& x\hat{\mathcal{C}}_{n}(x)-\frac{1}{4}\hat{\mathcal{C}}_{n-1}(x), ~~n\geq 2,\\
 	\hat{\mathcal{C}}_{2}(x)=&x\hat{\mathcal{C}}_{1}(x)-\frac{1}{2}\hat{\mathcal{C}}_{0}(x)
 	\end{split}
 \end{equation*}
with initial data $\hat{\mathcal{C}}_{0}(x)=\mathcal{C}_{0}(x)=1,~~ \hat{\mathcal{C}}_{1}(x)=\mathcal{C}_{1}(x)=x.$

The kernel of the Chebyshev polynomials for $k\leq-1$ and $k\geq 1$ is given by\rm{\cite[eq. 7.5]{Chihara book}}
\begin{align*}
	\hat{\mathcal{C}}_{n+1}^*(k;x)=\frac{1}{x-k}\left[\hat{\mathcal{C}}_{n+2}(x)-\frac{\hat{\mathcal{C}}_{n+2}(k)}{\hat{\mathcal{C}}_{n+1}(k)}\hat{\mathcal{C}}_{n+1}(x)\right].
\end{align*}
$\{\hat{\mathcal{C}}_{n}^*(k;x)\}_{n=0}^{\infty}$ is a monic orthogonal polynomial sequence with respect to the quasi definite linear functional $\mathcal{L}^*.$
Then,
by \eqref{OP in terms of Kernel},
we say that 	$\hat{\mathcal{C}}_{n+1}(x)$ is a quasi-type kernel polynomial of order one. Indeed,
\begin{align*}
	\hat{\mathcal{C}}_{n+1}(x)= \hat{\mathcal{C}}_{n+1}^*(k;x)-\frac{1}{4}\frac{\hat{\mathcal{C}}_{n}(k)}{\hat{\mathcal{C}}_{n+1}(k)}\hat{\mathcal{C}}_{n}^*(k;x).
\end{align*}
In addition, from the above equation it is natural to ask about the behavior of the ratio of Chebyshev polynomial and Chebyshev kernel polynomial.
In particular, for $k=1$, we have
\begin{align*}
	\hat{\mathcal{C}}_{n+1}(x)= \hat{\mathcal{C}}_{n+1}^*(1;x)-\frac{1}{2}\hat{\mathcal{C}}_{n}^*(1;x).
\end{align*}
By using Corollary \ref{Ratio kernel Chebyshev}, we get
\begin{align*}
	\lim\limits_{x\rightarrow 1}\frac{\hat{\mathcal{C}}_{n+1}(x)}{\hat{\mathcal{C}}_{n+1}^*(1;x)}=\frac{4}{3+2n}.
\end{align*}
\end{example}
\subsection{Recurrence relation and orthogonality}
It is known that we do not have the orthogonality of quasi-orthogonal polynomials with respect to $\mathcal{L}$, although it is interesting to obtain the difference equation similar to TTRR of quasi-orthogonal polynomials. In \cite{Ismail_2019_quasi-orthogonal}, Ismail and Wang discussed the TTRR type relation for quasi-orthogonal polynomials. In the next result, we generalize their result to obtain the difference equation with variable coefficients of quasi-type kernel polynomials.

\begin{theorem}
	Let $\mathcal{Q}_{n+1}^*(k;x)$ be a monic quasi-type kernel polynomial of order $1$. Then $\mathcal{Q}_{n+1}^*(k;x)$ satisfy the difference equation
	\[\mathcal{J}_n(x)\mathcal{Q}_{n+2}^*(k;x)=\left[\mathcal{D}_{n+1}(x)\mathcal{J}_n(x)-b\mathcal{J}_{n+1}(x)\right]\mathcal{Q}_{n+1}^*(k;x)-\lambda_{n+1}^*\mathcal{J}_{n+1}(x)\mathcal{Q}_{n}^*(k;x),\]
	where
	\[\mathcal{D}_{n+1}(x)=x-c_{n+2}^*+b,~~ \mathcal{J}_{n+1}(x)=b\mathcal{D}_{n}(x)+ \lambda_{n+1}^*.\]
\end{theorem}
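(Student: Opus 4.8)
The plan is to eliminate the kernel polynomials $\mathcal{P}_n^*(k;\cdot)$ from the defining linear combination by means of their three-term recurrence \eqref{Kernel-TTRR}, thereby producing a closed recurrence among the $\mathcal{Q}_n^*(k;\cdot)$ alone. By the monic normalization and the remark following Theorem \ref{quasi-type kernel}, I may write $\mathcal{Q}_{n+1}^*(k;x)=\mathcal{P}_{n+1}^*(k;x)+b\,\mathcal{P}_{n}^*(k;x)$ for a scalar $b$; the whole argument is then a matter of rewriting shifts of this single identity and watching the recurrence coefficients reorganize themselves into $\mathcal{D}$ and $\mathcal{J}$.

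First I would substitute $\mathcal{P}_{n+1}^*=(x-c_{n+1}^*)\mathcal{P}_n^*-\lambda_{n+1}^*\mathcal{P}_{n-1}^*$ into the definition, collapsing it to the one-sided relation $\mathcal{Q}_{n+1}^*=\mathcal{D}_n\,\mathcal{P}_n^*-\lambda_{n+1}^*\mathcal{P}_{n-1}^*$, where the coefficient $x-c_{n+1}^*+b$ is exactly $\mathcal{D}_n(x)$. Next I would eliminate $\mathcal{P}_n^*$ between this and $\mathcal{Q}_n^*=\mathcal{P}_n^*+b\,\mathcal{P}_{n-1}^*$; the constant multiplying $\mathcal{P}_{n-1}^*$ organizes itself into $b\mathcal{D}_n+\lambda_{n+1}^*=\mathcal{J}_{n+1}(x)$, yielding the mixed relation $\mathcal{Q}_{n+1}^*=\mathcal{D}_n\,\mathcal{Q}_n^*-\mathcal{J}_{n+1}\,\mathcal{P}_{n-1}^*$. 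Solving this for $\mathcal{P}_{n-1}^*$ and feeding it back into $\mathcal{Q}_n^*=\mathcal{P}_n^*+b\mathcal{P}_{n-1}^*$ expresses each of $\mathcal{P}_{n-1}^*$ and $\mathcal{P}_n^*$ as an explicit $\mathcal{J}$-weighted combination of two consecutive members of the $\mathcal{Q}$-sequence.

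The recurrence then drops out by writing $\mathcal{P}_n^*$ in two ways — once from the $\mathcal{Q}_n^*$-relation just obtained and once from the shifted mixed relation taken at the next index — and equating them. Clearing the denominators coming from the $\mathcal{J}$'s and collecting the $\mathcal{Q}$'s produces a three-term relation of the asserted $\mathcal{D}\mathcal{J}$–$\lambda\mathcal{J}$ shape. The single load-bearing simplification, and the step I expect to be the main obstacle to carry out cleanly, is the algebraic identity $b^2+b(x-c_{n+1}^*)+\lambda_{n+1}^*=\mathcal{J}_{n+1}(x)$: it is precisely this collapse that annihilates the stray $\mathcal{P}_{n+1}^*$ and $\mathcal{P}_{n-1}^*$ contributions and forces the middle coefficient into the combination $\mathcal{D}_{n+1}\mathcal{J}-b\mathcal{J}$.

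As an independent consistency check I would instead substitute $\mathcal{Q}_m^*=\mathcal{P}_m^*+b\mathcal{P}_{m-1}^*$ directly into both sides of the claimed equation and reduce everything to the basis $\{\mathcal{P}_{n+1}^*,\mathcal{P}_n^*\}$ using \eqref{Kernel-TTRR} to rewrite $\mathcal{P}_{n+2}^*$ and $\mathcal{P}_{n-1}^*$. The coefficient of $\mathcal{P}_{n+1}^*$ matches automatically, and matching the coefficient of $\mathcal{P}_n^*$ reduces the whole statement to one scalar identity among the sequences $\mathcal{D}$, $\mathcal{J}$ and $\lambda^*$; verifying that identity confirms the result and, crucially, pins down the correct indexing of the $\mathcal{J}$-factors, guarding against an off-by-one slip in the bookkeeping.
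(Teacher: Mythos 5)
Your proposal is correct and takes essentially the same route as the paper: the paper writes $\mathcal{Q}_{n+1}^*=\mathcal{P}_{n+1}^*+b\,\mathcal{P}_n^*$ together with its TTRR-rewritten shift as a $2\times 2$ linear system, inverts it using the determinant $b^2+b(x-c_{n+1}^*)+\lambda_{n+1}^*=\mathcal{J}_{n+1}(x)$ (exactly your ``load-bearing'' identity), and substitutes into $\mathcal{Q}_{n+2}^*=\mathcal{D}_{n+1}\mathcal{P}_{n+1}^*-\lambda_{n+2}^*\mathcal{P}_n^*$ --- which is your sequential elimination organized as matrix inversion. Your worry about an off-by-one is well founded but falls on the statement rather than on your argument: carried to the end, both your elimination and the paper's own displayed computation yield $\mathcal{J}_{n+1}(x)\mathcal{Q}_{n+2}^*=\left[\mathcal{D}_{n+1}(x)\mathcal{J}_{n+1}(x)-b\mathcal{J}_{n+2}(x)\right]\mathcal{Q}_{n+1}^*-\lambda_{n+1}^*\mathcal{J}_{n+2}(x)\mathcal{Q}_{n}^*$, so with the convention $\mathcal{J}_{n+1}=b\mathcal{D}_n+\lambda_{n+1}^*$ the $\mathcal{J}$-indices printed in the theorem are shifted down by one.
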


\begin{proof}
	By the definition of $\mathcal{Q}_{n+1}^*(k;x)$, we have
	\begin{align}\label{monic quasi kernel}
		\mathcal{Q}_{n+1}^*(k;x)=\mathcal{P}_{n+1}^*(k;x)+b\mathcal{P}_n^*(k;x).
	\end{align}
	By using \eqref{Kernel-TTRR}, we can write \eqref{monic quasi kernel} as
	\begin{align}\label{Quasi kernel using TTRR}
		\mathcal{Q}_{n}^*(k;x)=\mathcal{P}_{n}^*(k;x)+b\mathcal{P}_{n-1}^*(k;x)
		=-\frac{b}{\lambda_{n+1}^*}\mathcal{P}_{n}^*(k;x)+\left((x-c_{n+1}^*)\frac{b}{\lambda_{n+1}^*}+1\right)\mathcal{P}_{n}^*(k;x).
	\end{align}
We can write the equations \eqref{monic quasi kernel} and \eqref{Quasi kernel using TTRR}, in the matrix form as follows
	\[\begin{pmatrix}
		\mathcal{Q}_{n+1}^*(k;x)\\
		\mathcal{Q}_{n}^*(k;x)
	\end{pmatrix}=\begin{pmatrix}
	1 & b\\
	-\frac{b}{\lambda_{n+1}^*} & (x-c_{n+1}^*)\frac{b}{\lambda_{n+1}^*}+1
\end{pmatrix}\begin{pmatrix}
\mathcal{P}_{n+1}^*(k;x)\\
\mathcal{P}_{n}^*(k;x)
\end{pmatrix}.\]
Since the right side of the matrix is invertible, we have

\begin{align}\label{Matrix form Kernel in terms of Quasi-kernel}
	\begin{pmatrix}
		\mathcal{P}_{n+1}^*(k;x)\\
		\mathcal{P}_{n}^*(k;x)
	\end{pmatrix}=\frac{\lambda_{n+1}^*}{b^2+\lambda_{n+1}^*+(x-c_{n+1}^*)b}\begin{pmatrix}
		(x-c_{n+1}^*)\frac{b}{\lambda_{n+1}^*}+1 & -b\\
		\frac{b}{\lambda_{n+1}^*} & 1
	\end{pmatrix}\begin{pmatrix}
		\mathcal{Q}_{n+1}^*(k;x)\\
		\mathcal{Q}_{n}^*(k;x)
	\end{pmatrix}
\end{align}
Further, using \eqref{Kernel-TTRR}, we write
\begin{align*}
	\mathcal{Q}_{n+2}^*(k;x)
	=(x-c_{n+2}^*+b)\mathcal{P}_{n+1}^*(k;x)-\lambda_{n+2}^*\mathcal{P}_{n}^*(k;x).
\end{align*}
Again, we can use \eqref{Matrix form Kernel in terms of Quasi-kernel} to obtain the expression of  $\mathcal{Q}_{n+2}^*(k;x)$ in terms of $\mathcal{Q}_{n+1}^*(k;x)$ and $\mathcal{Q}_{n}^*(k;x)$ as

\begin{align*}
	\mathcal{Q}_{n+2}^*(k;x)&=\frac{\lambda_{n+1}^*}{b^2+\lambda_{n+1}^*+(x-c_{n+1}^*)b}\left((x-c_{n+2}^*+b)\left[\left((x-c_{n+1}^*)\frac{b}{\lambda_{n+1}^*}+1\right)\mathcal{Q}_{n+1}^*(k;x)\right.\right.\\
	&\left.\left.\hspace{4.5cm}-b\mathcal{Q}_{n}^*(k;x)\right]-\lambda_{n+2}^*\left(\frac{b}{\lambda_{n+1}^*}\mathcal{Q}_{n+1}^*(k;x)+\mathcal{Q}_{n}^*(k;x)\right)\right).
	\end{align*}
After simplifying the above equation, we obtain the desired result
\[\mathcal{J}_n(x)\mathcal{Q}_{n+2}^*(k;x)=\left[\mathcal{D}_{n+1}(x)\mathcal{J}_n(x)-b\mathcal{J}_{n+1}(x)\right]\mathcal{Q}_{n+1}^*(k;x)-\lambda_{n+1}^*\mathcal{J}_{n+1}(x)\mathcal{Q}_{n}^*(k;x),\]
where
\[\mathcal{D}_{n+1}(x)=x-c_{n+2}^*+b,~~ \mathcal{J}_{n+1}(x)=b\mathcal{D}_{n}(x)+ \lambda_{n+1}^*.\]
This completes the proof.
\end{proof}
Next, we discuss the necessary and sufficient conditions for orthogonality of quasi-type kernel polynomial of order $l$. One may prove Theorem \ref{orthogonality of QK} in the same line as \cite[Theorem 1]{When do linear}, and hence we omit the proof.
\begin{theorem}\label{orthogonality of QK}
	Suppose $\{\mathcal{P}_n(x)\}_{n=0}^{\infty}$ be a sequence of monic orthogonal polynomials with respect to a quasi definite linear functional $\mathcal{L}$ and suppose $\{\mathcal{P}_n^*(k;x)\}_{n=0}^{\infty}$ be a sequence of kernel polynomials generated by Christoffel transformation $\mathcal{L}^*$ at $k$, which satisfy TTRR \eqref{Kernel-TTRR} with recurrence parameters  $c_{n+1}^*$, $\lambda_{n+1}^*$ given by \eqref{Kernel recurrence parameters}. Further, let $\{\mathcal{Q}_n^*(k;x)\}_{n=0}^{\infty}$ be a sequence of quasi-type kernel polynomials
	\begin{align*}
		\mathcal{Q}_n^*(k;x)= \mathcal{P}_n^*(k;x)+ \sum\limits_{m=1}^{l}\alpha_{m} \mathcal{P}_{n-m}^*(k;x) ~\text{for}~ n\geq l+1,
	\end{align*}
where $\{\alpha_m\}_{m=1}^l$ are scalars with nonzero value of  $\alpha_{l}$. Then $\{\mathcal{Q}_n^*(k;x)\}_{n=0}^{\infty}$ is monic orthogonal with respect to a linear functional, if, and only if, the following conditions hold:
\begin{enumerate}
	\item[\rm{(i)}] The polynomials $\mathcal{Q}_m^*(k;x)$ satisfy a TTRR given by
	\begin{align*}
	\mathcal{Q}_{m+1}^*(k;x)+(x-\tilde{c}_{m+1}^*)\mathcal{Q}_m^*(k;x)+\tilde{\lambda}_{m+1}^*\mathcal{Q}_{m-1}^*(k;x)=0,
	\end{align*}
 with $\tilde{\lambda}_{m+1}^*\neq 0$ for $m\in \{0,1,2,...,l\}$.

	\item[\rm{(ii)}] \text{For} $n>l+1$,
	
	$\lambda_{n+1}^* -\lambda_{n-l+1}^*=\alpha_1(c_{n+1}^*-c_n^*)= \neq0$,
	
	$\alpha_m(c_{n-m+1}^*-c_{n+1}^*)+\alpha_{m-1}[\lambda_{n-m+2}^*-\lambda_{n+1}^*-(\alpha_1(c_n^*-c_{n+1}^*))]=0, \quad m\in\{1,2,...,l\}.$
	 \item[\rm{(iii)}] For $m\in \{1,...,l-1\}$,
	
	  $\lambda_{l+2}^*  \neq \alpha_1(c_{l+2}^*-c_{l+1}^*),$
	
	 $\alpha_{m+1}(c_{l-m+1}^*-c_{l+2}^*)+ \alpha_{m}\lambda_{l-j+2}^*=\alpha_m^{(l)}[\lambda_{l+2}^*-\alpha_1(c_{l+1}-c_{l+2}^*)],$
	
	 $\alpha_l^{(l)}\lambda_{l+2}^*+\alpha_1\alpha_l^{(l)}(c_{l+1}-c_{l+2}^*)=\alpha_l\lambda_2^*$,
\end{enumerate}
 where $\alpha_m^{(l)}, m\in \{1,2,,...,l\}$, represents the constant coefficients  of $\mathcal{P}^*_{l-m}(k;\cdot)$ in the Fourier representation of $\mathcal{Q}_l^*(k;\cdot)$.

Moreover, for $n\geq l+1$, we have
\begin{align*}
	\tilde{c}_{n+1}^*=c_{n+1}^*, \quad \tilde{\lambda}_{n+1}^*=\lambda_{n+1}^* +\alpha_1(c_n^*-c_{n+1}^*),
\end{align*}
where $\tilde{c}_{n+1}^*$ and $\tilde{\lambda}_{n+1}^*$ are the recurrence coefficients in the TTRR expansion of $\mathcal{Q}_n^*(k;\cdot)$.
\end{theorem}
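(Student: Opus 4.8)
The engine of the whole argument is Favard's theorem: a monic polynomial sequence is orthogonal with respect to some quasi-definite functional precisely when it obeys a three-term recurrence whose subdiagonal coefficient never vanishes. So the plan is to reduce the orthogonality of $\{\mathcal{Q}_n^*(k;x)\}$ to the existence of a recurrence of the form in (i) with $\tilde\lambda_{m+1}^*\neq 0$, and then to translate that existence into the explicit algebraic constraints (ii)--(iii). Throughout I would work in the basis $\{\mathcal{P}_n^*(k;x)\}$, exploiting its known recurrence \eqref{Kernel-TTRR} together with the linear independence of its members, which permits a termwise comparison of coefficients.

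For the generic computation I take $n>l+1$, so that the defining expansion $\mathcal{Q}_n^*=\mathcal{P}_n^*+\sum_{m=1}^l\alpha_m\mathcal{P}_{n-m}^*$ (set $\alpha_0:=1$) applies to every polynomial in the putative recurrence. I would evaluate $x\mathcal{Q}_n^*$ two ways: first by applying \eqref{Kernel-TTRR} to each $x\mathcal{P}_{n-m}^*$, and second by expanding $\mathcal{Q}_{n+1}^*+\tilde c_{n+1}^*\mathcal{Q}_n^*+\tilde\lambda_{n+1}^*\mathcal{Q}_{n-1}^*$ in the same basis. Reading off the coefficient of $\mathcal{P}_{n-i}^*$ on both sides and cancelling the common $\alpha_{i+1}$ term gives, for each $i$, the relation $\alpha_i(c_{n-i+1}^*-\tilde c_{n+1}^*)+\alpha_{i-1}(\lambda_{n-i+2}^*-\tilde\lambda_{n+1}^*)=0$. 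The cases $i=0$ and $i=1$ force the ``Moreover'' formulas $\tilde c_{n+1}^*=c_{n+1}^*$ and $\tilde\lambda_{n+1}^*=\lambda_{n+1}^*+\alpha_1(c_n^*-c_{n+1}^*)$; substituting these back, the cases $2\le i\le l$ reproduce the nontrivial constraints in (ii), while the top case $i=l+1$ (where $\alpha_{l+1}=0$ and $\alpha_l\neq 0$) yields $\lambda_{n+1}^*-\lambda_{n-l+1}^*=\alpha_1(c_{n+1}^*-c_n^*)$, whose nonvanishing is exactly the quasi-definiteness demand $\tilde\lambda_{n+1}^*\neq 0$.

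The delicate part, and where I expect the real work to lie, is the boundary. For $n\le l$ the expansion of $\mathcal{Q}_n^*$ truncates and must instead be recorded through its genuine Fourier coefficients $\alpha_m^{(l)}$ in the $\mathcal{P}^*$ basis; the recurrence linking $\mathcal{Q}_{l+2}^*,\mathcal{Q}_{l+1}^*,\mathcal{Q}_l^*$ then mixes the constant coefficients $\alpha_m$ of the generic regime with these initial ones. Repeating the termwise matching at this junction, with careful index bookkeeping, is what produces the transitional identities in (iii) (for $m\in\{1,\ldots,l-1\}$ together with the two flanking equations), while condition (i) encodes that the first block $\{\mathcal{Q}_0^*,\ldots,\mathcal{Q}_l^*\}$ already satisfies a three-term recurrence with $\tilde\lambda_{m+1}^*\neq 0$. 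Keeping the shifting ranges of valid indices straight, and ensuring that a single set of constants $\{\alpha_m\}$ is forced to serve simultaneously for all $n$, is the principal obstacle; the interior algebra is routine once the indices are pinned down.

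Finally, for the converse I would run the computation backwards: assuming (i)--(iii), define $\tilde c_{n+1}^*,\tilde\lambda_{n+1}^*$ by the Moreover formulas, check that all the termwise identities above hold so that $\{\mathcal{Q}_n^*(k;x)\}$ genuinely satisfies the recurrence in (i), and note that (i) together with the $i=l+1$ identity guarantee $\tilde\lambda_{m+1}^*\neq 0$ for every $m$. Favard's theorem then supplies a quasi-definite functional with respect to which $\{\mathcal{Q}_n^*(k;x)\}$ is monic orthogonal. Since this scheme is structurally identical to the linear-combination argument of \cite[Theorem 1]{When do linear}, with only the kernel recurrence data \eqref{Kernel recurrence parameters} changing, this is presumably why the authors omit the full details.
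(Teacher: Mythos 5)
Your outline follows exactly the route the paper intends: the paper in fact omits the proof entirely, stating only that it can be carried out along the same lines as \cite[Theorem 1]{When do linear}, and your Favard-based termwise comparison in the $\{\mathcal{P}_n^*(k;\cdot)\}$ basis --- the generic regime $n>l+1$ yielding (ii) and the ``Moreover'' formulas, the truncated boundary regime yielding (i) and (iii) --- is precisely that argument transplanted to the kernel recurrence data \eqref{Kernel recurrence parameters}. Note only that, like the paper, you leave the boundary bookkeeping for (iii) unexecuted, so this is an outline of the same omitted proof rather than a divergence from it.
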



\section{Recovery of orthogonal polynomials}\label{sec:Recovery of orthogonal polynomials}
In this section, our primary goal is to recover the orthogonality of the polynomials which are the linear combination of polynomials generated by Darboux transformations and quasi-type kernel polynomials of orders 1 and 2 via suitable coefficients. In this process, we identify the unique sequences of constants that are necessary to recover such orthogonal polynomials.
\\
\subsection{Christoffel transformation}
  The relations among the quasi-orthogonal polynomials, monic orthogonal polynomial sequence and kernel polynomials are discussed in \cite{Derevyagin_Bailey_CJM by DT}.
 In Theorem \ref{QK+OP+KP}, we recover the polynomials $\mathcal{P}_{n}(x)$ from the linear combination of polynomial generated by Christoffel transformation and quasi-type kernel polynomials of order one with rational coefficients. We identify two sequences of parameters that are responsible for obtaining  $\mathcal{P}_{n}(x)$. We work with the monic quasi-type kernel polynomials of order one for some $k\in \mathbb{C},$ which can be defined as
$\mathcal{T}_{n}^*(k,x)=\mathcal{P}_{n}^*(k;x) + B_n\mathcal{P}_{n-1}^*(k;x).$

%
%
%
%
%
%
%
\begin{theorem}\label{QK+OP+KP}
	
	Let $\{\mathcal{P}_n(x)\}_{n=0}^{\infty}$ be a monic orthogonal polynomial sequence with respect to the positive definite linear functional $\mathcal{L}$. Let $\mathcal{T}_{n}^*(k_1,x)$ be a monic quasi-type kernel polynomial of order one for some $k_1 \in \mathbb{C}$. Suppose also that the sequence $\{\mathcal{P}_n^*(k_2;x)\}_{n=0}^{\infty}$ of kernel polynomials generated by Christoffel transformation exists for some $k_2 \in \mathbb{C}$. Then there exist unique sequences of constants $\{\gamma_n\}$ and $\{\eta_n\}$ with an explicit expression such that the sequence of polynomials $\{{\mathcal{Q}}_n^{C}(k_1,k_2;x)\}$ given by
	\begin{align}{\label{Recovery from kernel+quasi-type kernel}}
		{\mathcal{Q}}_n^{C}(k_1,k_2;x): =\frac{x-k_1}{x-\gamma_{n-1}}\mathcal{T}_{n}^*(k_1;x)+\eta_{n-1}\frac{x-k_2}{x-\gamma_{n-1}}\mathcal{P}_{n-1
		}^*(k_2;x)
	\end{align}
satisfies the same orthogonality as that of $\{\mathcal{P}_n(x)\}$. In particular, if $k=k_1=k_2 \in \mathbb{C}$ then

\begin{equation*}
		\tilde{\mathcal{T}}_{n}^*(k;x)=\mathcal{P}_{n}^*(k;x)+\tilde{B}_{n}\mathcal{P}_{n-1}^*(k;x)=\frac{x-\gamma_{n-1}}{x-k}\mathcal{P}_{n}(x),
\end{equation*}
and if $supp(d\mu) \subset \mathbb{R}$ is compact then
\begin{equation*}
	\tilde{\mathcal{T}}_{n}^*(k;x) \in L^1(d\mu).
\end{equation*}
\end{theorem}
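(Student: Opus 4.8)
The plan is to show that the free parameters $\gamma_{n-1}$ and $\eta_{n-1}$ can be chosen so that $\mathcal{Q}_n^{C}(k_1,k_2;x)$ coincides identically with $\mathcal{P}_n(x)$, which immediately yields the asserted orthogonality. The first step is to clear the denominator in \eqref{Recovery from kernel+quasi-type kernel}: multiplying by $(x-\gamma_{n-1})$ and writing $\mathcal{T}_n^*(k_1;x)=\mathcal{P}_n^*(k_1;x)+B_n\mathcal{P}_{n-1}^*(k_1;x)$ reduces the problem to the polynomial identity
\[
(x-\gamma_{n-1})\mathcal{P}_n(x)=(x-k_1)\mathcal{T}_n^*(k_1;x)+\eta_{n-1}(x-k_2)\mathcal{P}_{n-1}^*(k_2;x).
\]
The point is that once the right-hand side is forced to equal $(x-\gamma_{n-1})\mathcal{P}_n(x)$, the factor $(x-\gamma_{n-1})$ divides it exactly, so $\mathcal{Q}_n^{C}$ is a genuine polynomial rather than a rational function.

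Next I would expand each product $(x-k_i)\mathcal{P}_m^*(k_i;x)$ using the defining relation \eqref{kernel}, namely $(x-k_i)\mathcal{P}_m^*(k_i;x)=\mathcal{P}_{m+1}(x)-\frac{\mathcal{P}_{m+1}(k_i)}{\mathcal{P}_m(k_i)}\mathcal{P}_m(x)$, so that the right-hand side becomes a linear combination of $\mathcal{P}_{n+1}(x)$, $\mathcal{P}_n(x)$ and $\mathcal{P}_{n-1}(x)$. On the left-hand side I would apply the TTRR \eqref{TTRR} to write $(x-\gamma_{n-1})\mathcal{P}_n(x)=\mathcal{P}_{n+1}(x)+(c_{n+1}-\gamma_{n-1})\mathcal{P}_n(x)+\lambda_{n+1}\mathcal{P}_{n-1}(x)$. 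Since $\{\mathcal{P}_{n+1},\mathcal{P}_n,\mathcal{P}_{n-1}\}$ are linearly independent, matching coefficients yields three scalar conditions: the $\mathcal{P}_{n+1}$-coefficient matches automatically (both sides are monic), the $\mathcal{P}_{n-1}$-coefficient is a single linear equation determining $\eta_{n-1}$, and the $\mathcal{P}_n$-coefficient then fixes $\gamma_{n-1}$. Solving gives the explicit closed forms
\[
\eta_{n-1}=-\frac{\mathcal{P}_{n-1}(k_2)}{\mathcal{P}_n(k_2)}\left(\lambda_{n+1}+B_n\frac{\mathcal{P}_n(k_1)}{\mathcal{P}_{n-1}(k_1)}\right),\qquad \gamma_{n-1}=c_{n+1}+\frac{\mathcal{P}_{n+1}(k_1)}{\mathcal{P}_n(k_1)}-B_n-\eta_{n-1},
\]
and uniqueness is clear because the two equations are in triangular form. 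The hypotheses that the kernel families exist guarantee $\mathcal{P}_m(k_1),\mathcal{P}_m(k_2)\neq 0$, so all denominators are legitimate; as a check, the computation specializes to Proposition \ref{Prop:Quasi Kernel TTRR for CDKernel} when $B_n=0$ and $k_1=k_2=k$, in which case the TTRR evaluated at $x=k$ forces $\gamma_{n-1}=k$ and the prefactor becomes trivial.

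For the special case $k=k_1=k_2$, the two terms on the right-hand side share the common factor $(x-k)$, so after recovering $\mathcal{Q}_n^{C}=\mathcal{P}_n$ one factors it out to get $(x-\gamma_{n-1})\mathcal{P}_n(x)=(x-k)\bigl(\mathcal{P}_n^*(k;x)+(B_n+\eta_{n-1})\mathcal{P}_{n-1}^*(k;x)\bigr)$; setting $\tilde{B}_n=B_n+\eta_{n-1}$ gives $\tilde{\mathcal{T}}_n^*(k;x)=\frac{x-\gamma_{n-1}}{x-k}\mathcal{P}_n(x)$. Finally, for the integrability claim I would use that the Christoffel/kernel construction requires $k\notin\operatorname{supp}(d\mu)$, so when the support is compact the distance $d=\operatorname{dist}(k,\operatorname{supp}(d\mu))$ is strictly positive and $\left|\frac{x-\gamma_{n-1}}{x-k}\right|$ is bounded on the support; since $\mathcal{P}_n$ is bounded there and $\mu$ is a finite measure, $\int|\tilde{\mathcal{T}}_n^*(k;x)|\,d\mu<\infty$. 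The main obstacle is purely organizational: carrying out the coefficient bookkeeping in the correct basis while tracking the ratios $\mathcal{P}_m(k_i)/\mathcal{P}_{m-1}(k_i)$, and verifying that the resulting triangular system is genuinely solvable (in particular $\mathcal{P}_n(k_2)\neq 0$) so that $\eta_{n-1}$ and $\gamma_{n-1}$ are well defined and unique.
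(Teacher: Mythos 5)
Your proposal is correct and follows essentially the same route as the paper: expand $(x-k_1)\mathcal{T}_n^*(k_1;x)$ and $(x-k_2)\mathcal{P}_{n-1}^*(k_2;x)$ via the kernel definition \eqref{kernel}, match coefficients against the TTRR expansion of $(x-\gamma_{n-1})\mathcal{P}_n(x)$ in the basis $\{\mathcal{P}_{n+1},\mathcal{P}_n,\mathcal{P}_{n-1}\}$, and read off $\eta_{n-1}$ and $\gamma_{n-1}$ from the resulting triangular system; your explicit constants agree with the paper's (after the index shift $n\mapsto n-1$), as does the reduction $\tilde B_n=B_n+\eta_{n-1}$ in the case $k_1=k_2$. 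The only cosmetic difference is the $L^1$ step, where you bound $\lvert(x-\gamma_{n-1})/(x-k)\rvert$ on the compact support while the paper invokes H\"older's inequality — both suffice.
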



\begin{proof}If the sequence $\{\mathcal{Q}^C_n(k_1,k_2;x)\}$ is orthogonal with respect to the linear functional $\mathcal{L}$, then by uniqueness theorem of orthogonal polynomials with respect to linear functional, $\{\mathcal{Q}^C_n(k_1,k_2;x)\}$ and $\{\mathcal{P}_n(x)\}$ are the same system of orthogonal polynomials and vice-versa. Consider
	\begin{align*}
	& {\mathcal{Q}}_{n+1}^{C}(k_1,k_2;x) =\frac{x-k_1}{x-\gamma_{n}}\mathcal{T}_{n+1}^*(k_1;x)+\eta_{n}\frac{x-k_2}{x-\gamma_{n}}\mathcal{P}_{n
		}^*(k_2;x)\\
	&=\frac{1}{x-\gamma_{n}}\left[(x-k_1)\mathcal{T}_{n+1}^*(k_1;x)-(x-\gamma_{n})\mathcal{P}_{n+1}(x)+\eta_{n}(x-k_2)\mathcal{P}_{n
	}^*(k_2;x)\right]+\mathcal{P}_{n+1}(x).
	\end{align*}
	
	Using the definitions of kernel polynomials and quasi-type kernel polynomial of order one, we have
	 \begin{align*}
		&(x-k_1)\mathcal{T}_{n+1}^*(k_1;x)-(x-\gamma_n)\mathcal{P}_{n+1}(x)+ \eta_n(x-k_2)\mathcal{P}_n^*(k_2;x)\\
		&= (x-k_1)(\mathcal{P}_{n+1}^*(k_1;x) + B_{n+1}\mathcal{P}_{n}^*(k_1;x))-(x-\gamma_n)\mathcal{P}_{n+1}(x)+ \eta_n(x-k_2)\mathcal{P}_n^*(k_2;x)\\
		&= \mathcal{P}_{n+2}(x)- \frac{\mathcal{P}_{n+2}(k_1)}{\mathcal{P}_{n+1}(k_1)} \mathcal{P}_{n+1}(x) + B_{n+1}\mathcal{P}_{n+1}(x)-B_{n+1}\frac{\mathcal{P}_{n+1}(k_1)}{\mathcal{P}_{n}(k_1)} \mathcal{P}_{n}(x)- (x-\gamma_n)\mathcal{P}_{n+1}(x) \\
		&\hspace{9.9cm}+ \eta_n\mathcal{P}_{n+1}(x)-\eta_n \frac{\mathcal{P}_{n+1}(k_2)}{\mathcal{P}_{n}(k_2)} \mathcal{P}_{n}(x)
	\end{align*}

Combining the coefficients of $\mathcal{P}_{n+2}(x)$, $\mathcal{P}_{n+1}(x)$ and $\mathcal{P}_{n}(x)$, we can write the above expression as
\begin{align}\label{5}
\nonumber	&(x-k_1)\mathcal{T}_{n+1}^*(k_1;x)-(x-\gamma_n)\mathcal{P}_{n+1}(x)+ \eta_n(x-k_2)\mathcal{P}_n^*(k_2;x)=\mathcal{P}_{n+2}(x)\\
&- \left(x-\gamma_n+ \frac{\mathcal{P}_{n+2}(k_1)}{\mathcal{P}_{n+1}(k_1)}-B_{n+1}- \eta_n\right)\mathcal{P}_{n+1}(x)- \left(\eta_n \frac{\mathcal{P}_{n+1}(k_2)}{\mathcal{P}_{n}(k_2)} + B_{n+1}\frac{\mathcal{P}_{n+1}(k_1)}{\mathcal{P}_{n}(k_1)}\right)\mathcal{P}_n(x).
\end{align}
Consider
\begin{align*}
\eta_n= -\left(\lambda_{n+2} + B_{n+1}\frac{\mathcal{P}_{n+1}(k_1)}{\mathcal{P}_{n}(k_1)} \right)\frac{\mathcal{P}_{n}(k_2)}{\mathcal{P}_{n+1}(k_2)},
\end{align*}
and
\begin{align*}
\gamma_n =c_{n+2}+ \frac{\mathcal{P}_{n+2}(k_1)}{\mathcal{P}_{n+1}(k_1)}- B_{n+1}
+ \left(\lambda_{n+2} + B_{n+1}\frac{\mathcal{P}_{n+1}(k_1)}{\mathcal{P}_{n}(k_1)} \right)\frac{\mathcal{P}_{n}(k_2)}{\mathcal{P}_{n+1}(k_2)}.
\end{align*}Then, we  can write  \eqref{5} as
		\newline
		$\displaystyle(x-k_1)\mathcal{T}_{n+1}^*(k_1;x)-(x-\gamma_n)\mathcal{P}_{n+1}(x)+ \eta_n(x-k_2)\mathcal{P}_n^*(k_2;x)$
\begin{align}\label{6}
=\mathcal{P}_{n+2}(x)- \left(x-c_{n+2}\right)\mathcal{P}_{n+1}(x)+ \lambda_{n+2}\mathcal{P}_n(x),
\end{align}
		where
		\begin{equation*}
		c_{n+2}=\gamma_n- \frac{\mathcal{P}_{n+2}(k_1)}{\mathcal{P}_{n+1}(k_1)}+B_{n+1}+ \eta_n, ~\lambda_{n+2}=-\eta_n \frac{\mathcal{P}_{n+1}(k_2)}{\mathcal{P}_{n}(k_2)} - B_{n+1}\frac{\mathcal{P}_{n+1}(k_1)}{\mathcal{P}_{n}(k_1)}.
		\end{equation*}\\
	The above expression \eqref{6} must be equal to zero because $\mathcal{P}_{n}(x)$ is a monic orthogonal polynomial sequence with respect to measure $d\mu$. Hence, by Favard's theorem it satisfies the TTRR, which gives the desired result.
If both quasi-type kernel polynomial of order one and kernel polynomials exist for some $k=k_1=k_2\in \mathbb{C}$, then \eqref{Recovery from kernel+quasi-type kernel} can be written as
\begin{align*}
(x-k)\left(\mathcal{P}_{n+1}^*(k;x)+\tilde{B}_{n+1}\mathcal{P}_{n}^*(k;x)\right)-(x-\gamma_n)\mathcal{P}_{n+1}(x)=0,
\end{align*}	
where $\tilde{B}_{n+1}=B_{n+1}+\eta_n$.
\noindent \text{This implies}~~ $(x-k)\tilde{\mathcal{T}}_{n+1}^*(k;x)-(x-\gamma_n)\mathcal{P}_{n+1}(x)=0$, which further gives

\begin{equation}
		\tilde{\mathcal{T}}_{n+1}^*(k;x)=\frac{x-\gamma_n}{x-k}\mathcal{P}_{n+1}(x).
\end{equation}
If the support of a measure $\mu$ is a compact subset of  real line and $k \not \in supp(d\mu)$ then
\begin{align*}
	\lVert \tilde{\mathcal{T}}_{n+1}^*(k;x) \rVert_{L^1(d\mu)} &= \int \left\vert \frac{x-\gamma_n}{x-k}\mathcal{P}_{n+1}(x)\right\vert d\mu \\
	&\leq \int \frac{x}{x-k}\mathcal{P}_{n+1}(x)d\mu + \gamma_{n} \int \frac{\mathcal{P}_{n+1}(x)}{x-k}d\mu \\
	&\leq \left(\int \frac{1}{\vert x-k\vert^2}d\mu\right)^{1/2}\left[\left(\int \vert x\mathcal{P}_{n+1}\vert^2d\mu\right)^{1/2} +\gamma_{n}\left(\int \vert \mathcal{P}_{n+1}\vert^2d\mu\right)^{1/2}\right]\\
	&< \infty.
\end{align*}
In the above, we used the triangle inequality and H\"older's inequality to obtain the first and second inequalities, respectively. Moreover,  finiteness follows directly from the fact that multiplication by $x$ is in $L^2(d\mu)$ and $k\not\in \text{supp}(d\mu)$.
\end{proof}
\subsection{Geronimus transformation}
Let $\mathcal{L}$ be a linear functional. For given $k\in\mathbb{C}$, define
\begin{align*}
	\widetilde{\mathcal{L}}((x-k)p(x))= \mathcal{L}(p(x))
\end{align*}
for any polynomial $p(x)$. This transformation $\widetilde{\mathcal{L}}$ is known as \emph{Geronimus transformation} at $k\in\mathbb{C}$.  Geronimus transformation can be regarded  as the inverse of Christoffel transformation at $k$\cite{Castillo_2017_linear spectral_integral tranform}.\\
For any polynomial $p(x)$, we can write

\begin{align*}
	\widetilde{\mathcal{L}}(p(x))&= \widetilde{\mathcal{L}}\left(\left(\frac{p(x)-p(k)}{x-k}\right)(x-k) + p(k)\right)\\
	&= \mathcal{L}\left(\frac{p(x)-p(k)}{x-k}\right) + p(k)\widetilde{\mathcal{L}}(1),
\end{align*}
where $\widetilde{\mathcal{L}}(1)$ is not uniquely determined, and hence an arbitrary constant. However, $\widetilde{\mathcal{L}}(1)\neq0$, because there does not exist any orthogonal polynomial sequence with such property\cite{Derevyagin_Bailey_CJM by DT}.
\newline

%
Next we state the result in which  a sequence of quasi-orthogonal polynomials of order one with suitable choice of $A_n$ is taken, which forms an orthogonal polynomial sequence with respect to the  Geronimus transformation at $k$.

\begin{theorem}\rm{{\cite{Castillo_2017_linear spectral_integral tranform,Garcia_Paco_2022_JMAA}}}\label{Geronimus transformation theorem}
	Let  $\{\mathcal{P}_n(x)\}_{n=0}^{\infty}$ be the sequence of orthogonal polynomials with respect to the positive definite linear functional $\mathcal{L}$. If $k\in \mathbb{C}\setminus \text{supp} \mu$ then the sequence of monic polynomials
	\begin{eqnarray}\label{Geronimus transformation expression}
		\widetilde{\mathcal{P}}_n(k;x)=\mathcal{P}_n(x) + A_n \mathcal{P}_{n-1}(x),
	\end{eqnarray}
	where
	\begin{align*}
		A_n=-\frac{\int\frac{\mathcal{P}_n(x)}{k-x}d\mu(x)}{\int\frac{\mathcal{P}_{n-1}(x)}{k-x}d\mu(x)}
	\end{align*}
	is an orthogonal polynomial sequence for the corresponding Geronimus transformation $\widetilde{\mathcal{L}}$ at $k$.
\end{theorem}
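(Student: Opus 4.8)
The plan is to verify directly that the monic polynomials $\widetilde{\mathcal{P}}_n(k;\cdot)$ satisfy the relations characterizing the monic orthogonal polynomial sequence of $\widetilde{\mathcal{L}}$, namely $\widetilde{\mathcal{L}}(\pi\,\widetilde{\mathcal{P}}_n(k;x))=0$ for every polynomial $\pi$ of degree at most $n-1$, and then to confirm quasi-definiteness. Throughout I would fix the free constant of the Geronimus transform by the canonical (mass-free) normalization $\widetilde{\mathcal{L}}(1)=\int \frac{d\mu(x)}{x-k}$, for which the integral representation established just before the theorem collapses to $\widetilde{\mathcal{L}}(p)=\int \frac{p(x)}{x-k}\,d\mu(x)$ for every polynomial $p$; this is precisely the normalization that reproduces the coefficient $A_n$ in the statement. (For any other value of $\widetilde{\mathcal{L}}(1)$ one finds $\widetilde{\mathcal{L}}(\widetilde{\mathcal{P}}_n)$ proportional to $\widetilde{\mathcal{P}}_n(k)$, so the stated $A_n$ is tied to this choice.)

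The structural key I would exploit is that the defining identity $\widetilde{\mathcal{L}}((x-k)q)=\mathcal{L}(q)$ splits the test space. Writing an arbitrary $\pi$ of degree at most $n-1$ as $\pi(x)=(x-k)q(x)+\pi(k)$, where $q(x)=\frac{\pi(x)-\pi(k)}{x-k}$ has degree at most $n-2$, gives
\[
\widetilde{\mathcal{L}}(\pi\,\widetilde{\mathcal{P}}_n)=\mathcal{L}(q\,\widetilde{\mathcal{P}}_n)+\pi(k)\,\widetilde{\mathcal{L}}(\widetilde{\mathcal{P}}_n).
\]
Since $\widetilde{\mathcal{P}}_n=\mathcal{P}_n+A_n\mathcal{P}_{n-1}$ and $\deg q\le n-2$, the $\mathcal{L}$-orthogonality of $\{\mathcal{P}_n\}$ forces $\mathcal{L}(q\,\widetilde{\mathcal{P}}_n)=0$ regardless of $A_n$. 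Hence the entire family of orthogonality conditions collapses to the single scalar equation $\widetilde{\mathcal{L}}(\widetilde{\mathcal{P}}_n)=0$, that is, $\widetilde{\mathcal{L}}(\mathcal{P}_n)+A_n\,\widetilde{\mathcal{L}}(\mathcal{P}_{n-1})=0$. Solving yields $A_n=-\widetilde{\mathcal{L}}(\mathcal{P}_n)/\widetilde{\mathcal{L}}(\mathcal{P}_{n-1})$, and inserting $\widetilde{\mathcal{L}}(\mathcal{P}_m)=\int\frac{\mathcal{P}_m(x)}{x-k}\,d\mu=-\int\frac{\mathcal{P}_m(x)}{k-x}\,d\mu$ reproduces exactly the expression in \eqref{Geronimus transformation expression}. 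This single argument delivers existence and uniqueness of the sequence $\{A_n\}$ at once.

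What remains is to guarantee that the construction is legitimate and that $\widetilde{\mathcal{L}}$ is quasi-definite, and this is where I expect the real work to lie. On the one hand, $A_n$ is defined only when the denominator $\int\frac{\mathcal{P}_{n-1}(x)}{k-x}\,d\mu$ is nonzero. On the other hand, computing the leading term via $\widetilde{\mathcal{L}}(\widetilde{\mathcal{P}}_n^2)=\widetilde{\mathcal{L}}(x^n\widetilde{\mathcal{P}}_n)=A_n\,\mathcal{L}(\mathcal{P}_{n-1}^2)$ shows that the $n$-th norm is nonzero precisely when $A_n\neq0$, i.e.\ when the numerator $\int\frac{\mathcal{P}_n(x)}{k-x}\,d\mu$ is also nonzero. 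Thus the whole theorem rests on the nonvanishing of the second-kind functions $Q_m(k)=\int\frac{\mathcal{P}_m(x)}{k-x}\,d\mu$ for $k\notin\operatorname{supp}\mu$, which is the main obstacle. For real $k$ outside the interval of orthogonality this follows from the classical fact that the zeros of each $Q_m$ lie in the convex hull of $\operatorname{supp}\mu$; for general $k\in\mathbb{C}\setminus\operatorname{supp}\mu$ it is exactly the hypothesis ensuring that the Geronimus transform is quasi-definite, supplied here by the positive-definiteness of $\mathcal{L}$ together with $k\notin\operatorname{supp}\mu$. Granting this nonvanishing, $\{A_n\}$ is well defined and unique and $\{\widetilde{\mathcal{P}}_n(k;x)\}$ is the monic orthogonal polynomial sequence of $\widetilde{\mathcal{L}}$, as claimed.
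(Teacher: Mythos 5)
The paper does not prove this theorem at all: it is quoted verbatim from the cited references \cite{Castillo_2017_linear spectral_integral tranform,Garcia_Paco_2022_JMAA}, so there is no internal argument to compare against. Judged on its own, your verification is sound and essentially the standard one. The decomposition $\pi(x)=(x-k)q(x)+\pi(k)$ correctly collapses all $n$ orthogonality conditions to the single equation $\widetilde{\mathcal{L}}(\widetilde{\mathcal{P}}_n)=0$, since $\mathcal{L}(q\,\widetilde{\mathcal{P}}_n)=0$ for $\deg q\le n-2$; your choice of the mass-free normalization $\widetilde{\mathcal{L}}(1)=\int\frac{d\mu}{x-k}$ is exactly the one that turns $-\widetilde{\mathcal{L}}(\mathcal{P}_n)/\widetilde{\mathcal{L}}(\mathcal{P}_{n-1})$ into the stated $A_n$ (and you rightly note that other normalizations give a different $A_n$, a point the paper's statement silently suppresses); and the norm identity $\widetilde{\mathcal{L}}(\widetilde{\mathcal{P}}_n^2)=A_n\,\mathcal{L}(\mathcal{P}_{n-1}^2)$ is correct, reducing quasi-definiteness to $A_n\neq 0$.

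The one place where you are slightly too generous is the final nonvanishing claim. You correctly identify that everything hinges on $Q_m(k)=\int\frac{\mathcal{P}_m(x)}{k-x}\,d\mu\neq 0$, but the assertion that this is ``supplied by the positive-definiteness of $\mathcal{L}$ together with $k\notin\operatorname{supp}\mu$'' is not quite automatic. The identity $\int\frac{\mathcal{P}_m(x)^2}{k-x}\,d\mu=\mathcal{P}_m(k)Q_m(k)$ does give $Q_m(k)\neq 0$ whenever $k$ is non-real (take imaginary parts) or real outside the convex hull of $\operatorname{supp}\mu$ (the integrand has fixed sign), but for real $k$ lying in a gap of the support the left-hand side can a priori vanish, and the hypothesis $k\in\mathbb{C}\setminus\operatorname{supp}\mu$ alone does not rule out $Q_m(k)=0$. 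Since you explicitly flag this as the crux rather than hiding it, I would call this a stated additional hypothesis (as the cited sources effectively treat it) rather than a gap in your argument; just phrase it as such instead of attributing it to positive-definiteness.
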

In Theorem \ref{Geronimus transformation theorem}, we see that one can find the explicit form of polynomials generated by Geronimus transformation in terms of orthogonal polynomials $\mathcal{P}_n(x)$. In the Proposition \ref{Poly intemrs poly gen by GT}, we give the expression for orthogonal polynomials $\mathcal{P}_n(x)$ in terms of polynomials generated by $\widetilde{\mathcal{L}}$ using TTRR \eqref{TTRR}. Note that the similar expression for $\mathcal{P}_n(x)$ with different approach was given in \cite{Garcia_Paco_2022_JMAA} and references therein.
\begin{proposition}\label{Poly intemrs poly gen by GT}
	Let  $\{\widetilde{\mathcal{P}}_n(x)\}_{n=0}^{\infty}$ be the sequence of orthogonal polynomials with respect to the Geronimus transformation which exists for some $k\in \mathbb{C}$. Then we can write $\mathcal{P}_n(x)$ in terms of linear combinations of $\widetilde{\mathcal{P}}_n(k;x)$ and $\widetilde{\mathcal{P}}_{n+1}(k;x)$  as follows:
	\begin{align*}
		\mathcal{P}_n(x)=\frac{1}{x-k}\widetilde{\mathcal{P}}_{n+1}(k;x)-\frac{1}{x-k}\frac{\lambda_{n+1}}{A_n}\widetilde{\mathcal{P}}_{n}(k;x).
	\end{align*}
\end{proposition}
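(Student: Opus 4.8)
The plan is to mimic the strategy of Proposition~\ref{Prop:Quasi Kernel TTRR for CDKernel}: substitute the explicit form \eqref{Geronimus transformation expression} of the Geronimus-transformed polynomials into a linear combination of $\widetilde{\mathcal{P}}_{n+1}(k;x)$ and $\widetilde{\mathcal{P}}_n(k;x)$ carrying an undetermined coefficient, and then use the three-term recurrence \eqref{TTRR} to collapse the combination onto a multiple of $(x-k)\mathcal{P}_n(x)$. Dividing by $x-k$ would then yield the assertion.

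First I would record, from \eqref{Geronimus transformation expression},
\begin{align*}
\widetilde{\mathcal{P}}_{n+1}(k;x) &= \mathcal{P}_{n+1}(x)+A_{n+1}\mathcal{P}_n(x),\\
\widetilde{\mathcal{P}}_{n}(k;x) &= \mathcal{P}_{n}(x)+A_{n}\mathcal{P}_{n-1}(x),
\end{align*}
and form $\widetilde{\mathcal{P}}_{n+1}(k;x)+C_n\widetilde{\mathcal{P}}_n(k;x)$ for a scalar $C_n$ to be determined. Expanding gives
\[
\widetilde{\mathcal{P}}_{n+1}(k;x)+C_n\widetilde{\mathcal{P}}_n(k;x)=\mathcal{P}_{n+1}(x)+\bigl(A_{n+1}+C_n\bigr)\mathcal{P}_n(x)+C_nA_n\mathcal{P}_{n-1}(x),
\]
whereas \eqref{TTRR} rewritten as $(x-k)\mathcal{P}_n(x)=\mathcal{P}_{n+1}(x)+(c_{n+1}-k)\mathcal{P}_n(x)+\lambda_{n+1}\mathcal{P}_{n-1}(x)$ gives the target. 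Matching the coefficient of $\mathcal{P}_{n-1}(x)$ forces $C_nA_n=\lambda_{n+1}$, i.e.\ $C_n=\lambda_{n+1}/A_n$, and then matching the coefficient of $\mathcal{P}_n(x)$ reduces the whole claim to the single scalar identity $A_{n+1}+\lambda_{n+1}/A_n=c_{n+1}-k$. Granting this, the combination equals $(x-k)\mathcal{P}_n(x)$ and the result follows.

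The hard part will be establishing the nonlinear recurrence $A_{n+1}+\lambda_{n+1}/A_n=c_{n+1}-k$ for the Geronimus coefficients; this is the only genuinely non-formal step. I would derive it straight from the integral definition in Theorem~\ref{Geronimus transformation theorem}. Setting $I_n:=\int \tfrac{\mathcal{P}_n(x)}{k-x}\,d\mu(x)$, so that $A_n=-I_n/I_{n-1}$, I multiply \eqref{TTRR} by $(k-x)^{-1}$ and integrate against $d\mu$. Using the splitting $\tfrac{x}{k-x}=-1+\tfrac{k}{k-x}$ together with $\int \mathcal{P}_n\,d\mu=0$ for $n\ge 1$ (valid since $\mathcal{P}_n\perp\mathcal{P}_0=1$), the stray term $\int\mathcal{P}_n\,d\mu$ drops out and the recurrence transfers to the functions of the second kind as $I_{n+1}=(k-c_{n+1})I_n-\lambda_{n+1}I_{n-1}$. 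Dividing by $I_n$ and substituting $I_{n-1}/I_n=-1/A_n$ then produces exactly $A_{n+1}=(c_{n+1}-k)-\lambda_{n+1}/A_n$, as required.

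Two bookkeeping points I would keep in mind. The derivation of the $A_n$-recurrence uses $\int\mathcal{P}_n\,d\mu=0$, so it is valid for $n\ge1$; the smallest-index cases should be checked directly against the initial data and the definition of $\widetilde{\mathcal{L}}(1)$. Finally, I note that matching the $\mathcal{P}_{n-1}(x)$ coefficient unambiguously fixes the coefficient of $\widetilde{\mathcal{P}}_n(k;x)$ to be $+\lambda_{n+1}/A_n$; when writing out the final proof I would reconcile this with the displayed sign in the statement, since the balance of the $\mathcal{P}_{n-1}(x)$ terms is only consistent for that choice (up to the sign conventions adopted for $A_n$ in \eqref{Geronimus transformation expression} and for \eqref{TTRR}).
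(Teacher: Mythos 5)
Your proposal is correct, and its skeleton is the same as the paper's: form $\widetilde{\mathcal{P}}_{n+1}(k;x)+C_n\widetilde{\mathcal{P}}_n(k;x)$ with an undetermined coefficient, expand via \eqref{Geronimus transformation expression}, and match against \eqref{TTRR} so that everything collapses onto $(x-k)\mathcal{P}_n(x)$. The substantive difference is that you prove the one identity the paper's proof leaves unaddressed. After fixing $C_n$ by the $\mathcal{P}_{n-1}$ coefficient, the paper simply declares $c_{n+1}=k+A_{n+1}-B_n$ and $\lambda_{n+1}=-B_nA_n$ as if these were free to define; but $c_{n+1}$ and $\lambda_{n+1}$ are already determined by \eqref{TTRR}, so one must verify that the $\mathcal{P}_n$ coefficient balances as well, i.e.\ that $A_{n+1}+\lambda_{n+1}/A_n=c_{n+1}-k$. (In the Christoffel analogue, Proposition \ref{Prop:Quasi Kernel TTRR for CDKernel}, the corresponding consistency check is automatic from evaluating \eqref{TTRR} at $x=k$; here it is not.) Your derivation of this recurrence from $I_n=\int\mathcal{P}_n(x)(k-x)^{-1}\,d\mu$, via $I_{n+1}=(k-c_{n+1})I_n-\lambda_{n+1}I_{n-1}$ for $n\ge 1$ and $A_n=-I_n/I_{n-1}$, is exactly the missing step, and it is correct; your caveat about the lowest index is also appropriate, since $\int\mathcal{P}_n\,d\mu=0$ is used there.

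Your closing remark about the sign is well taken and should not be papered over. Carrying the expansion through carefully gives $\widetilde{\mathcal{P}}_{n+1}(k;x)+C_n\widetilde{\mathcal{P}}_n(k;x)=\mathcal{P}_{n+1}(x)+(A_{n+1}+C_n)\mathcal{P}_n(x)+C_nA_n\mathcal{P}_{n-1}(x)$, so matching the term $+\lambda_{n+1}\mathcal{P}_{n-1}(x)$ coming from \eqref{TTRR} forces $C_n=+\lambda_{n+1}/A_n$, and the conclusion reads $(x-k)\mathcal{P}_n(x)=\widetilde{\mathcal{P}}_{n+1}(k;x)+\frac{\lambda_{n+1}}{A_n}\widetilde{\mathcal{P}}_{n}(k;x)$. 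The printed statement, and the paper's intermediate display in which the coefficient of $\mathcal{P}_{n-1}$ appears as $-B_nA_n$ rather than $+B_nA_n$, carry the opposite sign; a low-degree check ($n=1$ with a symmetric measure, so $c_2=0$) shows the minus-sign version produces a spurious constant term $-2\lambda_2$ and cannot equal $(x-k)\mathcal{P}_1(x)$. So when you write the final proof, resolve the discrepancy in favor of your computation, not the displayed formula.
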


\begin{proof}
	Using equation \eqref{Geronimus transformation expression}, we can write
	\begin{align*}
		&\displaystyle\frac{1}{x-k}\widetilde{\mathcal{P}}_{n+1}(k;x)+\frac{1}{x-k}B_n\widetilde{\mathcal{P}}_{n}(k;x)\\
		&\hspace{2.5cm}=\frac{1}{x-k}\left[\mathcal{P}_{n+1}(x)-(x-k-A_{n+1}+B_n)\mathcal{P}_{n}(x)-B_nA_n\mathcal{P}_{n-1}(x)\right]+\mathcal{P}_{n}(x).
	\end{align*}
	Since $A_n\neq 0$, by putting
	\begin{eqnarray*}
		B_n=-\frac{\lambda_{n+1}}{A_n},
	\end{eqnarray*}
	we can write the above equation as
	\begin{align*}\displaystyle
		&\frac{1}{x-k}\widetilde{\mathcal{P}}_{n+1}(k;x)+\frac{1}{x-k}B_n\widetilde{\mathcal{P}}_{n}(k;x)\\
		&\hspace{3cm}=\frac{1}{x-k}\left[\mathcal{P}_{n+1}(x)-(x-c_{n+1})\mathcal{P}_{n}(x)+\lambda_{n+1}\mathcal{P}_{n-1}(x)\right]+\mathcal{P}_{n}(x)
	\end{align*}
	where
	\begin{align*}
		c_{n+1}=k+A_{n+1}-B_n, ~~\lambda_{n+1}=-B_nA_n.
	\end{align*}
	Using TTRR \eqref{TTRR}, we get the desired result.	
\end{proof}

 In the next theorem, we recover the orthogonality for ${\mathcal{Q}}_n^{G}(k_1,k_2;x)$   by obtaining three sequences of parameters.

\begin{theorem}\label{QK+OP+Geronimus}
	Let $\{\mathcal{P}_n(x)\}_{n=0}^{\infty}$ be a monic orthogonal polynomial sequence with respect to the positive definite linear functional $\mathcal{L}$. Let $\mathcal{T}_{n}^*(k_2,x)$ be a quasi-type kernel polynomial of order one for some $k_2 \in \mathbb{C}$. Further, suppose that the sequence $\{	\widetilde{\mathcal{P}}_{n}(k_1;x)\}_{n=0}^{\infty}$ of the polynomials corresponding to Geronimus transformation exist for some $k_1 \in \mathbb{C}$. Then there exist unique sequences of constants $\{\alpha_n\},\{\gamma_n\}$ and $\{\eta_n\}$ such that the sequence of polynomials $\{{\mathcal{Q}}_n^{G}(k_1,k_2;x)\}$ given by
	\begin{align}\label{Recovery from quasi-type kernel+Geronimus}
		{\mathcal{Q}}_n^{G}(k_1,k_2;x) : =\frac{1}{\alpha_nx-\gamma_n}\widetilde{\mathcal{P}}_{n+1}(k_1;x)+\eta_n\frac{x-k_2}{\alpha_nx-\gamma_n}\mathcal{T}_n^*(k_2;x)
	\end{align}
satisfies the same orthogonality as that of $\{{\mathcal{P}}_n(x)\}$.
\end{theorem}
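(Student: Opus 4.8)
The plan is to follow the same strategy used in the proof of Theorem \ref{QK+OP+KP}. By the uniqueness of a monic orthogonal polynomial sequence attached to a quasi-definite functional, the sequence $\{\mathcal{Q}_n^{G}(k_1,k_2;x)\}$ shares the orthogonality of $\{\mathcal{P}_n(x)\}$ if and only if $\mathcal{Q}_n^{G}(k_1,k_2;x)=\mathcal{P}_n(x)$ for every $n$. Clearing the denominator in \eqref{Recovery from quasi-type kernel+Geronimus}, this is equivalent to the polynomial identity
\[
\widetilde{\mathcal{P}}_{n+1}(k_1;x)+\eta_n(x-k_2)\mathcal{T}_n^*(k_2;x)=(\alpha_n x-\gamma_n)\mathcal{P}_n(x),
\]
so the whole theorem reduces to producing, for each $n$, a unique triple $(\alpha_n,\gamma_n,\eta_n)$ for which this identity holds.

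First I would rewrite both sides in the single basis $\{\mathcal{P}_{n+1}(x),\mathcal{P}_n(x),\mathcal{P}_{n-1}(x)\}$. On the left, I substitute the Geronimus expression \eqref{Geronimus transformation expression}, $\widetilde{\mathcal{P}}_{n+1}(k_1;x)=\mathcal{P}_{n+1}(x)+A_{n+1}\mathcal{P}_n(x)$, and use the kernel definition \eqref{kernel} twice, in the forms $(x-k_2)\mathcal{P}_n^*(k_2;x)=\mathcal{P}_{n+1}(x)-\frac{\mathcal{P}_{n+1}(k_2)}{\mathcal{P}_n(k_2)}\mathcal{P}_n(x)$ and $(x-k_2)\mathcal{P}_{n-1}^*(k_2;x)=\mathcal{P}_n(x)-\frac{\mathcal{P}_n(k_2)}{\mathcal{P}_{n-1}(k_2)}\mathcal{P}_{n-1}(x)$, in order to turn $\eta_n(x-k_2)\mathcal{T}_n^*(k_2;x)=\eta_n(x-k_2)\bigl(\mathcal{P}_n^*(k_2;x)+B_n\mathcal{P}_{n-1}^*(k_2;x)\bigr)$ into a combination of $\mathcal{P}_{n+1},\mathcal{P}_n,\mathcal{P}_{n-1}$. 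On the right, I expand $(\alpha_n x-\gamma_n)\mathcal{P}_n(x)$ by means of the TTRR \eqref{TTRR}, obtaining $\alpha_n\mathcal{P}_{n+1}(x)+(\alpha_n c_{n+1}-\gamma_n)\mathcal{P}_n(x)+\alpha_n\lambda_{n+1}\mathcal{P}_{n-1}(x)$.

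Matching the coefficients of $\mathcal{P}_{n+1}$, $\mathcal{P}_n$ and $\mathcal{P}_{n-1}$ then yields three linear equations: $\alpha_n=1+\eta_n$ (which is exactly the monicity normalization forcing the numerator's leading coefficient to cancel that of the denominator), $\alpha_n\lambda_{n+1}=-\eta_n B_n\frac{\mathcal{P}_n(k_2)}{\mathcal{P}_{n-1}(k_2)}$, and a third equation that pins down $\gamma_n$ once $\alpha_n,\eta_n$ are known. Eliminating $\eta_n=\alpha_n-1$ reduces the first two to a single relation solving explicitly for $\alpha_n=\dfrac{B_n\mathcal{P}_n(k_2)/\mathcal{P}_{n-1}(k_2)}{\lambda_{n+1}+B_n\mathcal{P}_n(k_2)/\mathcal{P}_{n-1}(k_2)}$, and back-substitution gives $\eta_n$ and then $\gamma_n$; this simultaneously exhibits the three sequences and proves their uniqueness. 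With these choices the cleared identity holds, so dividing back by $\alpha_n x-\gamma_n$ shows $\mathcal{Q}_n^{G}=\mathcal{P}_n$, and Favard's theorem delivers the claimed orthogonality.

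The main obstacle I anticipate is not the algebra but guaranteeing that the reduced system is genuinely solvable and that the construction is well posed: one must verify $\lambda_{n+1}+B_n\frac{\mathcal{P}_n(k_2)}{\mathcal{P}_{n-1}(k_2)}\neq 0$, which at once makes the formula for $\alpha_n$ legitimate, forces $\alpha_n\neq 0$ so that the denominator $\alpha_n x-\gamma_n$ really has degree one (and hence $\mathcal{Q}_n^{G}$ has the correct degree $n$), and keeps $\eta_n$ finite. Here I would lean on the standing hypotheses that guarantee existence of the Geronimus sequence and of the kernel polynomials, namely $\mathcal{P}_{n-1}(k_2),\mathcal{P}_n(k_2)\neq 0$ and $k_1$ outside $\operatorname{supp}\mu$, together with the positive-definiteness of $\mathcal{L}$ (so that $\lambda_{n+1}>0$), to control this denominator; the arbitrary normalization $\widetilde{\mathcal{L}}(1)\neq 0$ of the Geronimus functional plays no role, since the coefficients $A_n$ are already fixed by Theorem \ref{Geronimus transformation theorem}.
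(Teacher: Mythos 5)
Your proposal is correct and follows essentially the same route as the paper: expand $\widetilde{\mathcal{P}}_{n+1}(k_1;x)+\eta_n(x-k_2)\mathcal{T}_n^*(k_2;x)-(\alpha_nx-\gamma_n)\mathcal{P}_n(x)$ in the basis $\{\mathcal{P}_{n+1},\mathcal{P}_n,\mathcal{P}_{n-1}\}$ via \eqref{Geronimus transformation expression}, \eqref{kernel} and the TTRR, match coefficients, and solve the resulting linear system, which yields exactly the paper's $\alpha_n=1+\eta_n$, $\eta_n=-\lambda_{n+1}/(\lambda_{n+1}+\tilde B_n\mathcal{P}_n(k_2)/\mathcal{P}_{n-1}(k_2))$ and the stated $\gamma_n$. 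Your explicit flagging of the nonvanishing of $\lambda_{n+1}+\tilde B_n\mathcal{P}_n(k_2)/\mathcal{P}_{n-1}(k_2)$ is a reasonable extra care that the paper passes over silently, but it does not change the argument.
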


\begin{proof}If the sequence $\{\mathcal{Q}^G_n(k_1,k_2;x)\}$ is orthogonal with respect to the linear functional $\mathcal{L}$, then by uniqueness theorem of orthogonal polynomials, $\{\mathcal{Q}^G_n(k_1,k_2;x)\}$ and $\{\mathcal{P}_n(x)\}$ are the same system of orthogonal polynomials and vice-versa. We can write \eqref{Recovery from quasi-type kernel+Geronimus} as
	\begin{align*}
		&{\mathcal{Q}}_n^{G}(k_1,k_2;x) =\frac{1}{\alpha_nx-\gamma_n}\widetilde{\mathcal{P}}_{n+1}(k_1;x)+\eta_n\frac{x-k_2}{\alpha_nx-\gamma_n}\mathcal{T}_n^*(k_2;x)\\
		&\hspace{2.2cm}=\frac{1}{\alpha_nx-\gamma_n}\left[\widetilde{\mathcal{P}}_{n+1}(k_1;x)-(\alpha_{n}x-\gamma_{n})\mathcal{P}_{n}(x)+\eta_n(x-k_2)\mathcal{T}_n^*(k_2;x)\right]+\mathcal{P}_{n}(x).
	\end{align*}
	Considering \eqref{Geronimus transformation expression} together with the definition of kernel polynomials and quasi-type kernel polynomial of order one gives
\begin{align*}
	&\widetilde{\mathcal{P}}_{n+1}(k_1;x)-(\alpha_nx-\gamma_n)\mathcal{P}_n(x)+ \eta_n(x-k_2)\mathcal{T}_n^*(k_2;x)\\
	&= \mathcal{P}_{n+1}(x) + A_{n+1} \mathcal{P}_{n}(x)-(\alpha_nx-\gamma_n)\mathcal{P}_n(x)+\eta_n(x-k_2)( \mathcal{P}_n^*(k_2;x) + \tilde{B}_{n}\mathcal{P}_{n-1}^*(k_2;x))\\
	&= \mathcal{P}_{n+1}(x) + A_{n+1} \mathcal{P}_{n}(x)-(\alpha_nx-\gamma_n)\mathcal{P}_n(x)+ \eta_n\mathcal{P}_{n+1}(x)-\eta_n \frac{\mathcal{P}_{n+1}(k_2)}{\mathcal{P}_{n}(k_2)} \mathcal{P}_{n}(x)\\ &\hspace{6cm}+\eta_n\tilde{B}_{n}\mathcal{P}_{n}(x)-\eta_n\tilde{B}_{n}\frac{\mathcal{P}_{n}(k_2)}{\mathcal{P}_{n-1}(k_2)} \mathcal{P}_{n-1}(x).
\end{align*}
	  Combining the coefficients of $\mathcal{P}_{n+1}(x)$, $\mathcal{P}_{n}(x)$ and $\mathcal{P}_{n-1}(x)$, we get
	  \newline
	  $\displaystyle \widetilde{\mathcal{P}}_{n+1}(k_1;x)-(\alpha_nx-\gamma_n)\mathcal{P}_n(x)+ \eta_n(x-k_2)\mathcal{T}_n^*(k_2;x)$
\begin{align}\label{4}
	&\nonumber=(1+\eta_n)\left[\mathcal{P}_{n+1}(x)-\left(\frac{\alpha_n}{1+\eta_n}x-\frac{\gamma_n +A_{n+1}-\eta_n \frac{\mathcal{P}_{n+1}(k_2)}{\mathcal{P}_{n}(k_2)}+\eta_n\tilde{B}_n}{1+\eta_n}\right)\mathcal{P}_n(x)\right. \\
	&\hspace{9cm}\left.-\frac{\eta_n\tilde{B}_{n}\mathcal{P}_{n}(k_2)}{(1+\eta_n)\mathcal{P}_{n-1}(k_2)} \mathcal{P}_{n-1}(x)\right].
\end{align}
	Since $\mathcal{P}_{n}(k_2) \neq 0,\mathcal{P}_{n-1}(k_2)\neq 0$,  by substituting
\begin{align*}
	\eta_n=-\frac{\lambda_{n+1}}{\lambda_{n+1}+\tilde{B}_n\frac{\mathcal{P}_{n}(k_2)}{\mathcal{P}_{n-1}(k_2)}}, \quad\alpha_n=1-\frac{\lambda_{n+1}}{\lambda_{n+1}+\tilde{B}_n\frac{\mathcal{P}_{n}(k_2)}{\mathcal{P}_{n-1}(k_2)}},
\end{align*}
and
\begin{align*}
	\gamma_{n}=c_{n+1}(1+\eta_n)-A_{n+1}+\eta_n\frac{\mathcal{P}_{n+1}(k_2)}{\mathcal{P}_{n}(k_2)}-\eta_n \tilde{B}_n,
\end{align*}
     we can write the right side of the  expression \eqref{4} as
\begin{equation}\label{use}
	  (1+\eta_n)\left[\mathcal{P}_{n+1}(x)- \left(x-c_{n+1}\right)\mathcal{P}_{n}(x)+ \lambda_{n+1}\mathcal{P}_{n-1}(x)\right],
\end{equation}
      where
\begin{align*}
	 c_{n+1}=\frac{\gamma_n +A_{n+1}-\eta_n \frac{\mathcal{P}_{n+1}(k_2)}{\mathcal{P}_{n}(k_2)}+\eta_n\tilde{B}_n}{1+\eta_n}~ \text{and}~\lambda_{n+1}=-\frac{\eta_n\tilde{B}_{n}\mathcal{P}_{n}(k_2)}{(1+\eta_n)\mathcal{P}_{n-1}(k_2)}.
\end{align*}
	The above expression \eqref{use} must be equal to zero. Since $\mathcal{P}_{n}(x)$ is a monic orthogonal polynomial sequence, by Favard's theorem it satisfies TTRR. This completes the proof.		
\end{proof}

\subsection{Uvarov Transformation}
Linear spectral transformations play a significant role in the study of perturbation of orthogonal polynomials. We can obtain one of the main transformations by adding point mass to the original measure. In other words, if $\mathcal{L}$ is a quasi-definite linear functional, then we can define $\hat{\mathcal{L}}$ by
\begin{align*}
\hat{\mathcal{L}}=\mathcal{L}+R_o \delta(x-k),
\end{align*}
where $\delta(\cdot)$ is a mass point at $k$ and $R_o$ is a non zero constant. The new linear functional $\hat{\mathcal{L}}$ is known as canonical Uvarov transformation\cite{Uvarov trans. first paper} of $\mathcal{L}$.\\

 To study the structure of polynomials corresponding to Uvarov transformation, it is essential that the Uvarov transformation has at least the property of quasi definiteness. In this regard, the necessary and sufficient conditions for preserving the quasi definite property of the linear functional are given in \cite{M.M} . In addition, the condition for preserving the positive definite property of Uvarov transformation from the original positive definite linear functional is given in \cite{Positive definite of Uvarov}.

 \begin{theorem}[cf. {\cite[page 256]{Paco_2018_conference}}]
 	Let $\{\hat{\mathcal{P}}_n(x)\}_{n=0}^{\infty}$ be a monic orthogonal polynomial sequence corresponding to the quasi definite linear functional $\hat{\mathcal{L}}$.  Suppose that the sequence $\{\mathcal{P}_n^*(k;x)\}_{n=0}^{\infty}$ of kernel polynomials generated by Christoffel transformation exists for some $k \in \mathbb{C}$. Then we have
 	\begin{align}
 		\hat{\mathcal{P}}_n(x)=\mathcal{P}_n(x)-T_n\mathcal{P}^*_{n-1}(k;x),
 	\end{align}
 	where
 	\begin{align*}
 		T_n=\frac{R_o\mathcal{P}_n^2(k)}{\lambda_1...\lambda_{n+1}\left(1+\frac{R_o\mathcal{P}_{n-1}^*(k;k)\mathcal{P}_n(k)}{\lambda_1...\lambda_{n+1}}\right)}.
 	\end{align*}
 \end{theorem}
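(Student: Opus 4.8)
The plan is to determine $\hat{\mathcal{P}}_n$ directly from the orthogonality it must satisfy with respect to $\hat{\mathcal{L}}=\mathcal{L}+R_o\delta(x-k)$, and then to rewrite the resulting correction term through the kernel polynomial $\mathcal{P}^*_{n-1}(k;\cdot)$ via \eqref{Other form of kernel polynomial}. Since $\hat{\mathcal{P}}_n$ and $\mathcal{P}_n$ are both monic of degree $n$, the difference $r_n:=\hat{\mathcal{P}}_n-\mathcal{P}_n$ has degree at most $n-1$, so I would expand it in the orthogonal basis as $r_n=\sum_{j=0}^{n-1}c_j\mathcal{P}_j$ with $c_j=\mathcal{L}(r_n\mathcal{P}_j)/\mathcal{L}(\mathcal{P}_j^2)$. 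Because $\mathcal{L}(\mathcal{P}_n\mathcal{P}_j)=0$ for $j\le n-1$ while $\hat{\mathcal{L}}(\hat{\mathcal{P}}_n\mathcal{P}_j)=0$, the defining relation $\hat{\mathcal{L}}=\mathcal{L}+R_o\delta(x-k)$ forces $\mathcal{L}(\hat{\mathcal{P}}_n\mathcal{P}_j)=-R_o\hat{\mathcal{P}}_n(k)\mathcal{P}_j(k)$. Using the normalization $\mathcal{L}(\mathcal{P}_j^2)=\lambda_1\cdots\lambda_{j+1}$ (which is exactly the orthonormality of the $p_j$ from \eqref{Other form of kernel polynomial}), this pins down every Fourier coefficient and yields
\[
\hat{\mathcal{P}}_n(x)=\mathcal{P}_n(x)-R_o\,\hat{\mathcal{P}}_n(k)\sum_{j=0}^{n-1}\frac{\mathcal{P}_j(k)\mathcal{P}_j(x)}{\lambda_1\cdots\lambda_{j+1}}=\mathcal{P}_n(x)-R_o\,\hat{\mathcal{P}}_n(k)\,\mathcal{K}_{n-1}(x,k).
\]

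Next I would convert the Christoffel--Darboux kernel into the kernel polynomial. Replacing $n$ by $n-1$ in \eqref{Other form of kernel polynomial} gives $\mathcal{K}_{n-1}(x,k)=\mathcal{P}_{n-1}(k)(\lambda_1\cdots\lambda_n)^{-1}\mathcal{P}^*_{n-1}(k;x)$, so the correction is automatically proportional to $\mathcal{P}^*_{n-1}(k;\cdot)$; this is precisely the structural form $\hat{\mathcal{P}}_n=\mathcal{P}_n-T_n\mathcal{P}^*_{n-1}(k;\cdot)$ asserted in the statement, with $T_n=R_o\,\hat{\mathcal{P}}_n(k)\,\mathcal{P}_{n-1}(k)(\lambda_1\cdots\lambda_n)^{-1}$ still carrying the unknown value $\hat{\mathcal{P}}_n(k)$. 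To eliminate it, I would evaluate the displayed identity at $x=k$, obtaining the scalar equation $\hat{\mathcal{P}}_n(k)=\mathcal{P}_n(k)-R_o\hat{\mathcal{P}}_n(k)\mathcal{K}_{n-1}(k,k)$, whence $\hat{\mathcal{P}}_n(k)=\mathcal{P}_n(k)\big(1+R_o\mathcal{K}_{n-1}(k,k)\big)^{-1}$. Substituting this back, and writing the confluent value through the kernel-polynomial normalization as $\mathcal{K}_{n-1}(k,k)=\mathcal{P}_{n-1}(k)\,\mathcal{P}^*_{n-1}(k;k)(\lambda_1\cdots\lambda_n)^{-1}$, then delivers the closed form of $T_n$.

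I expect the main obstacle to be the index and normalization bookkeeping when passing between the three equivalent objects — the Fourier sum $\mathcal{K}_{n-1}(x,k)$, its confluent value $\mathcal{K}_{n-1}(k,k)$, and the kernel polynomial $\mathcal{P}^*_{n-1}(k;\cdot)$ — since the precise form of the constant $T_n$ depends delicately on which normalization of \eqref{Other form of kernel polynomial} is inserted at each stage. A secondary but necessary point to record is that the construction presupposes $\mathcal{P}_{n-1}(k)\neq0$, so that the kernel polynomials in \eqref{kernel} exist, together with $1+R_o\mathcal{K}_{n-1}(k,k)\neq0$, so that $\hat{\mathcal{P}}_n(k)$ is well defined; both are guaranteed by the assumed quasi-definiteness of $\hat{\mathcal{L}}$, and the latter is exactly the condition under which the \emph{monic} sequence $\{\hat{\mathcal{P}}_n\}$ referred to in the hypothesis exists.
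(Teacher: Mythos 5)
The paper offers no proof of this theorem --- it is quoted from \cite[page 256]{Paco_2018_conference} --- so the only thing to check your proposal against is the displayed formula itself. Your method is the standard and correct one: expanding $\hat{\mathcal{P}}_n-\mathcal{P}_n$ in the basis $\{\mathcal{P}_j\}_{j\le n-1}$, using $\hat{\mathcal{L}}=\mathcal{L}+R_o\delta(x-k)$ to compute the Fourier coefficients, evaluating at $x=k$ to solve for $\hat{\mathcal{P}}_n(k)$, and converting $\mathcal{K}_{n-1}(x,k)$ into $\mathcal{P}^*_{n-1}(k;x)$ via \eqref{Other form of kernel polynomial}. Everything up to and including
\[
\hat{\mathcal{P}}_n(x)=\mathcal{P}_n(x)-\frac{R_o\,\mathcal{P}_n(k)}{1+R_o\,\mathcal{K}_{n-1}(k,k)}\,\mathcal{K}_{n-1}(x,k)
\]
is sound, and your side conditions ($\mathcal{P}_{n-1}(k)\neq0$ and $1+R_o\mathcal{K}_{n-1}(k,k)\neq0$) are the right ones to record.

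The gap is that you stop at ``then delivers the closed form of $T_n$'' without actually performing the substitution, and when you do perform it the result does not match the statement. Since \eqref{Other form of kernel polynomial} with $n$ replaced by $n-1$ gives $\mathcal{K}_{n-1}(x,k)=\mathcal{P}_{n-1}(k)(\lambda_1\cdots\lambda_n)^{-1}\mathcal{P}^*_{n-1}(k;x)$, your argument yields
\[
T_n=\frac{R_o\,\mathcal{P}_n(k)\,\mathcal{P}_{n-1}(k)}{\lambda_1\cdots\lambda_n\left(1+\dfrac{R_o\,\mathcal{P}_{n-1}(k)\,\mathcal{P}^*_{n-1}(k;k)}{\lambda_1\cdots\lambda_n}\right)},
\]
whereas the theorem asserts the same expression with $\mathcal{P}_{n-1}(k)/(\lambda_1\cdots\lambda_n)$ replaced throughout by $\mathcal{P}_n(k)/(\lambda_1\cdots\lambda_{n+1})$; the two coincide only if $\mathcal{P}_n(k)=\lambda_{n+1}\mathcal{P}_{n-1}(k)$, which is false in general. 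A check at $n=1$ confirms your version: a direct computation gives $\hat{\mathcal{P}}_1(x)=\mathcal{P}_1(x)-R_o\mathcal{P}_1(k)/(\mu_0+R_o)$, which agrees with your $T_1=R_o\mathcal{P}_1(k)/(\lambda_1+R_o)$ but not with the stated $T_1=R_o\mathcal{P}_1^2(k)/(\lambda_1\lambda_2+R_o\mathcal{P}_1(k))$. So either the paper's transcription of the cited result contains an index slip (it looks as though \eqref{Other form of kernel polynomial} was applied with index $n$ rather than $n-1$ when rewriting $\mathcal{K}_{n-1}$), or a normalization different from the one used elsewhere in the paper is being assumed. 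Either way, your proposal as written does not prove the formula as stated, and you should carry out the last substitution explicitly and flag the discrepancy rather than assert that the closed form follows.
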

The following result shows that one can recover the original sequence of orthogonal polynomials from the linear combination of quasi-type kernel polynomials of order one and polynomials generated by Uvarov transformation with rational coefficients and by suitably identifying three sequences of constants.

\begin{theorem}\label{QK+OP+Uvarov}
 Let $\mathcal{T}_{n}^*(k_2,x)$ be a quasi-type kernel polynomial of order one for some $k_2 \in \mathbb{C}$. Further, suppose that sequence $\{\hat{\mathcal{P}}_{n}(x)\}_{n=0}^{\infty}$ of the polynomials corresponding to Uvarov transformation. Then there exist unique sequences of constants $\{\alpha_n\},\{\gamma_n\}$ and $\{\eta_n\}$ such that the sequence of polynomials $\{{\mathcal{Q}}_n^{U}(k_1,k_2;x)\}$ given by
\begin{align}\label{Recovery from quasi-type kernel+Uvarov}
	{\mathcal{Q}}_n^{U}(k_1,k_2;x): =\frac{x-k_1}{\alpha_nx-\gamma_n}\hat{\mathcal{P}}_{n}(x)+\eta_n\frac{x-k_2}{\alpha_nx-\gamma_n}\mathcal{T}_n^*(k_2;x)
\end{align}	
satisfies the same orthogonality given by $\{{\mathcal{P}}_n(x)\}$.
\end{theorem}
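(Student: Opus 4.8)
The plan is to mirror the proofs of Theorem~\ref{QK+OP+KP} and Theorem~\ref{QK+OP+Geronimus}. First I would appeal to the uniqueness of the monic orthogonal polynomial sequence of $\mathcal{L}$: the family $\{\mathcal{Q}_n^{U}(k_1,k_2;x)\}$ is orthogonal with respect to $\mathcal{L}$ if and only if it coincides with $\{\mathcal{P}_n(x)\}$. Thus it is enough to produce the three sequences $\{\alpha_n\},\{\gamma_n\},\{\eta_n\}$ forcing $\mathcal{Q}_n^{U}(k_1,k_2;x)=\mathcal{P}_n(x)$ for all $n$. Adding and subtracting $(\alpha_n x-\gamma_n)\mathcal{P}_n(x)$ in the numerator of \eqref{Recovery from quasi-type kernel+Uvarov}, I would rewrite
\begin{align*}
\mathcal{Q}_n^{U}(k_1,k_2;x)=\frac{1}{\alpha_n x-\gamma_n}\left[(x-k_1)\hat{\mathcal{P}}_n(x)-(\alpha_n x-\gamma_n)\mathcal{P}_n(x)+\eta_n(x-k_2)\mathcal{T}_n^*(k_2;x)\right]+\mathcal{P}_n(x),
\end{align*}
so that the assertion reduces to showing that the bracketed polynomial vanishes identically.

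Next I would expand the bracket in terms of $\mathcal{P}_{n+1}(x)$, $\mathcal{P}_n(x)$ and $\mathcal{P}_{n-1}(x)$. Substituting the Uvarov structure relation $\hat{\mathcal{P}}_n(x)=\mathcal{P}_n(x)-T_n\mathcal{P}_{n-1}^*(k_1;x)$ together with the quasi-type kernel expansion $\mathcal{T}_n^*(k_2;x)=\mathcal{P}_n^*(k_2;x)+\tilde{B}_n\mathcal{P}_{n-1}^*(k_2;x)$, and then using the kernel definition \eqref{kernel} in the forms $(x-k_1)\mathcal{P}_{n-1}^*(k_1;x)=\mathcal{P}_n(x)-\frac{\mathcal{P}_n(k_1)}{\mathcal{P}_{n-1}(k_1)}\mathcal{P}_{n-1}(x)$, $(x-k_2)\mathcal{P}_n^*(k_2;x)=\mathcal{P}_{n+1}(x)-\frac{\mathcal{P}_{n+1}(k_2)}{\mathcal{P}_n(k_2)}\mathcal{P}_n(x)$ and $(x-k_2)\mathcal{P}_{n-1}^*(k_2;x)=\mathcal{P}_n(x)-\frac{\mathcal{P}_n(k_2)}{\mathcal{P}_{n-1}(k_2)}\mathcal{P}_{n-1}(x)$, the bracket becomes a linear combination of $\mathcal{P}_{n+1}(x)$, $x\mathcal{P}_n(x)$, $\mathcal{P}_n(x)$ and $\mathcal{P}_{n-1}(x)$ with explicit constant coefficients.

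The third step is to require the bracket to be a scalar multiple of the recurrence combination $\mathcal{P}_{n+1}(x)-(x-c_{n+1})\mathcal{P}_n(x)+\lambda_{n+1}\mathcal{P}_{n-1}(x)$, which is identically zero by \eqref{TTRR}. Matching the coefficients of $\mathcal{P}_{n+1}(x)$ and of $x\mathcal{P}_n(x)$ pins down the scalar as $\eta_n$ and produces the relation $\alpha_n=1+\eta_n$, exactly as in the Geronimus recovery. The coefficient of $\mathcal{P}_{n-1}(x)$ then gives a single linear equation determining $\eta_n$,
\begin{align*}
\eta_n\left(\lambda_{n+1}+\tilde{B}_n\frac{\mathcal{P}_n(k_2)}{\mathcal{P}_{n-1}(k_2)}\right)=T_n\frac{\mathcal{P}_n(k_1)}{\mathcal{P}_{n-1}(k_1)},
\end{align*}
and the coefficient of $\mathcal{P}_n(x)$ determines $\gamma_n$ explicitly in terms of the now known $\eta_n$, together with $c_{n+1}$, $k_1$, $T_n$, $\tilde{B}_n$ and the ratio $\mathcal{P}_{n+1}(k_2)/\mathcal{P}_n(k_2)$. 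Since the resulting system is triangular, the three constants are uniquely determined; feeding them back annihilates the bracket, whence $\mathcal{Q}_n^{U}(k_1,k_2;x)=\mathcal{P}_n(x)$ and the recovered sequence inherits the orthogonality of $\{\mathcal{P}_n(x)\}$.

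The algebra of collecting coefficients is routine; the parts needing care are the non-degeneracy conditions ensuring the construction is legitimate, and I expect these to be the main obstacle. The kernel polynomials at $k_1$ and $k_2$ must exist, so that $\mathcal{P}_{n-1}(k_1)\neq0$, $\mathcal{P}_{n-1}(k_2)\neq0$ and $\mathcal{P}_n(k_2)\neq0$; the factor $\lambda_{n+1}+\tilde{B}_n\mathcal{P}_n(k_2)/\mathcal{P}_{n-1}(k_2)$ multiplying $\eta_n$ must be nonzero so that $\eta_n$ is well defined; and $\alpha_n=1+\eta_n$ must be nonzero so that $\alpha_n x-\gamma_n$ is a genuine degree-one polynomial and the rational prefactor in \eqref{Recovery from quasi-type kernel+Uvarov} is admissible. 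I would state these explicitly as standing hypotheses, inherited from the existence of the Uvarov and kernel families, in direct parallel with the conditions $\mathcal{P}_n(k_2)\neq0$ and $\mathcal{P}_{n-1}(k_2)\neq0$ invoked in the proof of Theorem~\ref{QK+OP+Geronimus}.
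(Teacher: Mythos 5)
Your proposal follows essentially the same route as the paper's proof: the same uniqueness reduction, the same add-and-subtract of $(\alpha_n x-\gamma_n)\mathcal{P}_n(x)$, the same expansion via the Uvarov structure relation and the kernel definition, and the same coefficient matching, yielding exactly the paper's formulas $\alpha_n=1+\eta_n$, the linear equation $\eta_n\bigl(\lambda_{n+1}+\tilde{B}_n\mathcal{P}_n(k_2)/\mathcal{P}_{n-1}(k_2)\bigr)=T_n\mathcal{P}_n(k_1)/\mathcal{P}_{n-1}(k_1)$, and the expression for $\gamma_n$. The non-degeneracy conditions you flag are indeed left implicit in the paper, but stating them is a refinement of, not a departure from, its argument.
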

\begin{proof}If the sequence $\{\mathcal{Q}^U_n(k_1,k_2;x)\}$ is orthogonal with respect to the linear functional $\mathcal{L}$, then by uniqueness theorem of orthogonal polynomials, $\{\mathcal{Q}^U_n(k_1,k_2;x)\}$ and $\{\mathcal{P}_n(x)\}$ are the same system of orthogonal polynomials and vice-versa. We can write the expression \eqref{Recovery from quasi-type kernel+Uvarov} as
	\begin{align*}
		&{\mathcal{Q}}_n^{U}(k_1,k_2;x) =\frac{x-k_1}{\alpha_nx-\gamma_n}\hat{\mathcal{P}}_{n}(x)+\eta_n\frac{x-k_2}{\alpha_nx-\gamma_n}\mathcal{T}_n^*(k_2;x)\\
		&\hspace{1.5cm}=\frac{1}{\alpha_nx-\gamma_n}\left[(x-k_1)\hat{\mathcal{P}}_{n}(x)-(\alpha_nx-\gamma_n)\mathcal{P}_n(x)+\eta_n(x-k_2)\mathcal{T}_n^*(k_2;x)\right]+\mathcal{P}_n(x).
	\end{align*} First, we simplify the bracketed portion of the above equation. For this, consider
	\begin{align}
	&\nonumber(x-k_1)\hat{\mathcal{P}}_{n}(x)-(\alpha_nx-\gamma_n)\mathcal{P}_n(x)+ \eta_n(x-k_2)\mathcal{T}_n^*(k_2;x)\\
	&\nonumber=(x-k_1)\mathcal{P}_n(x)-T_n\mathcal{P}_n(x)+T_n\frac{\mathcal{P}_n(k_1)}{\mathcal{P}_{n-1}(k_1)}\mathcal{P}_{n-1}(x)-\alpha_nx\mathcal{P}_n(x)+\beta_n\mathcal{P}_n(x)\\
	&\nonumber\hspace{5.1cm}+\eta_n(x-k_2)\mathcal{P}_n^*(k_2;x)+\tilde{B}_n\eta_n(x-k_2)\mathcal{P}_{n-1}^*(k_2;x)\\
	&\nonumber=(1-\alpha_n)x\mathcal{P}_n(x)+(\beta_n-k_1-T_n)\mathcal{P}_n(x)+T_n\frac{\mathcal{P}_n(k_1)}{\mathcal{P}_{n-1}(k_1)}\mathcal{P}_{n-1}(x)+\eta_n\mathcal{P}_{n+1}(x)\\
	&\nonumber\hspace{3.3cm}-\eta_n\frac{\mathcal{P}_{n+1}(k_2)}{\mathcal{P}_{n}(k_2)}\mathcal{P}_{n}(x)+\eta_n\tilde{B}_n\mathcal{P}_n(x)-\eta_n\tilde{B}n\frac{\mathcal{P}_{n}(k_2)}{\mathcal{P}_{n-1}(k_2)}\mathcal{P}_{n-1}(x)\\
	&\nonumber=(1-\alpha_n)\left[\mathcal{P}_{n+1}(x)+c_{n+1}\mathcal{P}_{n}(x)+ \lambda_{n+1}\mathcal{P}_{n-1}(x)\right]+(\beta_n-k_1-T_n)\mathcal{P}_n(x)\\
	&\nonumber\hspace{3.6cm}+T_n\frac{\mathcal{P}_n(k_1)}{\mathcal{P}_{n-1}(k_1)}\mathcal{P}_{n-1}(x)+\eta_n\mathcal{P}_{n+1}(x)-\eta_n\frac{\mathcal{P}_{n+1}(k_2)}{\mathcal{P}_{n}(k_2)}\mathcal{P}_{n}(x)\\
	&\nonumber\hspace{3.6cm}+\eta_n\tilde{B}_n\mathcal{P}_n(x)-\eta_n\tilde{B}n\frac{\mathcal{P}_{n}(k_2)}{\mathcal{P}_{n-1}(k_2)}\mathcal{P}_{n-1}(x)\\
	&\nonumber=(1-\alpha_n+\eta_n)\mathcal{P}_{n+1}(x)+\left(\beta_n-k_1-T_n-\eta_n\frac{\mathcal{P}_{n+1}(k_2)}{\mathcal{P}_n(k_2)}+\eta_n\tilde{B}_n+c_{n+1}-\alpha_nc_{n+1}\right)\mathcal{P}_n(x)\\
	&\label{L.I}\hspace{3cm}+\left(T_n\frac{\mathcal{P}_n(k_1)}{\mathcal{P}_{n-1}(k_1)}-\eta_n\tilde{B}n\frac{\mathcal{P}_{n}(k_2)}{\mathcal{P}_{n-1}(k_2)}+\lambda_{n+1}-\alpha_n\lambda_{n+1}\right)\mathcal{P}_{n-1}(x).
	\end{align}
	 Here to obtain \eqref{L.I}, we have used the expression for multiplication by $x$ with $\mathcal{P}_n(x)$ and combining the coefficients of $\mathcal{P}_{n+1}(x),\mathcal{P}_{n}(x) $ and $\mathcal{P}_{n-1}(x)$ to obtain equation \eqref{L.I}. Next, setting the right side  of \eqref{L.I} equal to zero and by using the fact that $\mathcal{P}_{n+1}(x),\mathcal{P}_{n}(x) $ and $\mathcal{P}_{n-1}(x)$ are linearly independent, we get that the coefficients must be equal to zero. So we obtain the unique sequence of constants  $\{\alpha_n\},\{\gamma_n\}$ and $\{\eta_n\}$ as
\begin{equation*}
	\begin{split}
	\alpha_n&=1+\frac{T_n\frac{\mathcal{P}_{n}(k_1)}{\mathcal{P}_{n-1}(k_1)}}{\tilde{B}_n\frac{\mathcal{P}_{n}(k_2)}{\mathcal{P}_{n-1}(k_2)}+\lambda_{n+1}}\\
	\beta_n&=k_1+T_n+\left(c_{n+1}-\tilde{B}_n+\frac{\mathcal{P}_{n+1}(k_2)}{\mathcal{P}_n(k_2)}\right)\left(\frac{T_n\frac{\mathcal{P}_{n}(k_1)}{\mathcal{P}_{n-1}(k_1)}}{\tilde{B}_n\frac{\mathcal{P}_{n}(k_2)}{\mathcal{P}_{n-1}(k_2)}+\lambda_{n+1}}\right)\\
	\eta_n&=\frac{T_n\frac{\mathcal{P}_{n}(k_1)}{\mathcal{P}_{n-1}(k_1)}}{\tilde{B}_n\frac{\mathcal{P}_{n}(k_2)}{\mathcal{P}_{n-1}(k_2)}+\lambda_{n+1}},
	\end{split}
\end{equation*}
 this completes the proof.
\end{proof}
\subsection{Quasi-type kernel polynomials of order two}
Now we define the monic quasi-orthogonal polynomials of order two. Define $\mathcal{S}_{n}(x)$\cite{Derevyagin_Bailey_CJM by DT} as follows:
\begin{align}
	\mathcal{S}_n (x)= \mathcal{P}_n(x) + L_{n}\mathcal{P}_{n-1}(x)+ M_{n}\mathcal{P}_{n-2}(x),
\end{align}
where $\{\mathcal{P}_n(x)\}_{n=0}^{\infty}$ is a monic orthogonal polynomial sequence with respect to the  linear functional $\mathcal{L}$ for any choice of $L_n, M_n\in \mathbb{C}$.\\

If $\mathcal{L}(x^m\mathcal{S}_n(x))=0 ~~\text{for}~~ m= 0, 1,2,...,n-3, ~~\text{for any choice  of} ~~L_n, M_n\in \mathbb{C}$, then $\mathcal{S}_n(x)$ is called quasi-orthogonal polynomial of order two.\\

Similarly, we can extend this definition to the quasi-type kernel polynomial of order two with respect to $\mathcal{L}^*$. Define $\mathcal{S}_n^*(k;x)$ as follows:
\begin{align*}
\mathcal{S}_n^*(k;x)= \mathcal{P}_n^*(k;x) + \tilde{L}_{n}\mathcal{P}_{n-1}^*(k;x)+\tilde{M}_{n}\mathcal{P}_{n-2}^*(k;x),
\end{align*}
where $\{\mathcal{P}_n^*(k;x)\}_{n=0}^{\infty}$ is a sequence of kernel polynomials which exist for some $k\in \mathbb{C}$ and form a monic orthogonal polynomial system with respect to the quasi-definite linear functional $\mathcal{L}^*.$\\

If $\mathcal{L}^*(x^m\mathcal{S}_n^*(k;x))=0 ~~\text{for}~~ m= 0, 1,2,...,n-3 ~~\text{and for any choice  of} ~~\tilde{L}_n, \tilde{M}_n\in \mathbb{C}$, then $\mathcal{S}_n^*(k;x)$ is called quasi-type kernel polynomial of order two.\\

 In the next theorem, we recover the polynomials $\mathcal{P}_{n}(x)$ from the linear combination of iterated kernel polynomials\cite[page 9]{Derevyagin_Bailey_CJM by DT} with two parameters and quasi-type kernel polynomials of order two with rational coefficients. We obtain two sequences of constants responsible for obtaining $\mathcal{P}_{n}(x)$.

\begin{theorem}\label{Recovery from iterated kernel and quasi-type kernel}
Let $\{\mathcal{P}_n(x)\}_{n=0}^{\infty}$ be a monic orthogonal polynomial sequence with respect to the positive definite linear functional $\mathcal{L}$. Let $\mathcal{S}_{n+1}^*(k_1,x)$ be a quasi-type kernel polynomial of order two for some $k_1 \in \mathbb{C}$ with suitable choice of $\tilde{L}_n,\tilde{M}_n$ which satisfy
\begin{align}\label{condition coeff. of quasi-type kernel order 2}
\tilde{L}_n+\tilde{M_n} \frac{\mathcal{P}_n(k_1)}{\lambda_{n+1}\mathcal{P}_{n-1}(k_1)}= \frac{\mathcal{P}_{n+2}(k_1)}{\mathcal{P}_{n+1}(k_1)}-\frac{\mathcal{P}_{n+2}(k_2)}{\mathcal{P}_{n+1}(k_2)}-\frac{\mathcal{P}_{n+1}^{*}(k_2,k_3)}{\mathcal{P}_{n}^{*}(k_2,k_3)}.
\end{align} Further, suppose that the sequence $\{\mathcal{P}_{n}^{**}(k_2,k_3;x)\}_{n=0}^{\infty}$ of iterated kernel polynomials exists for some $k_2\in \mathbb{C}_{\mp}, k_3 \in \mathbb{C}_{\pm}$. Then, there exist unique sequences of constants $\{\alpha_n\}, \{\beta_n\}$ such that the sequence of polynomials $\{{\mathcal{Q}}_n^{S}(k_1,k_2,k_3;x)\}$ given by
\begin{align}{\label{Recovery from quasi-type kernel order 2+iterated kernel}}
{\mathcal{Q}}_n^{S}(k_1,k_2,k_3;x): =\frac{x-k_1}{\alpha_n x-\beta_n}\mathcal{S}_{n+1}^*(k_1;x)-\frac{(x-k_2)(x-k_3)}{\alpha_n x-\beta_n}\mathcal{P}_n^*(k_2,k_3;x)
\end{align}
satisfies the same orthogonality given by the polynomials $\{{\mathcal{P}}_n(x)\}$.
\end{theorem}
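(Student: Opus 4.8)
The plan is to follow the template already used for Theorems \ref{QK+OP+KP}--\ref{QK+OP+Uvarov}: rather than verifying orthogonality directly, I will show that the rational combination $\mathcal{Q}_n^S(k_1,k_2,k_3;x)$ collapses to $\mathcal{P}_n(x)$ itself, so that the uniqueness of the monic orthogonal polynomial sequence for $\mathcal{L}$ (via Favard's theorem) instantly gives the asserted orthogonality, and conversely. Absorbing the common denominator, I rewrite \eqref{Recovery from quasi-type kernel order 2+iterated kernel} as
\[
\mathcal{Q}_n^S(k_1,k_2,k_3;x)=\frac{1}{\alpha_n x-\beta_n}\Big[(x-k_1)\mathcal{S}_{n+1}^*(k_1;x)-(x-k_2)(x-k_3)\mathcal{P}_n^*(k_2,k_3;x)-(\alpha_n x-\beta_n)\mathcal{P}_n(x)\Big]+\mathcal{P}_n(x),
\]
so the entire problem reduces to choosing the two sequences $\{\alpha_n\},\{\beta_n\}$ so that the bracketed polynomial vanishes identically.

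Next I would expand both transformed pieces in the monic basis $\{\mathcal{P}_j\}$. For the first piece I substitute the order-two expansion $\mathcal{S}_{n+1}^*=\mathcal{P}_{n+1}^*(k_1;\cdot)+\tilde{L}\mathcal{P}_n^*(k_1;\cdot)+\tilde{M}\mathcal{P}_{n-1}^*(k_1;\cdot)$ and clear each kernel polynomial through the defining identity \eqref{kernel}, turning $(x-k_1)\mathcal{S}_{n+1}^*$ into a combination of $\mathcal{P}_{n+2},\mathcal{P}_{n+1},\mathcal{P}_n,\mathcal{P}_{n-1}$. For the iterated kernel I apply \eqref{kernel} twice: first to the $k_2$-kernel sequence, giving $(x-k_3)\mathcal{P}_n^*(k_2,k_3;x)=\mathcal{P}_{n+1}^*(k_2;x)-\tfrac{\mathcal{P}_{n+1}^*(k_2;k_3)}{\mathcal{P}_n^*(k_2;k_3)}\mathcal{P}_n^*(k_2;x)$, then multiplying by $(x-k_2)$ and clearing the remaining $k_2$-kernels, so that $(x-k_2)(x-k_3)\mathcal{P}_n^*(k_2,k_3;x)$ becomes a combination of $\mathcal{P}_{n+2},\mathcal{P}_{n+1},\mathcal{P}_n$. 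Since both transformed pieces are monic of degree $n+2$, the $\mathcal{P}_{n+2}$ contributions cancel upon subtraction, leaving a polynomial supported on $\mathcal{P}_{n+1},\mathcal{P}_n,\mathcal{P}_{n-1}$.

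The decisive step is coefficient matching. Using the TTRR \eqref{TTRR} I write $(\alpha_n x-\beta_n)\mathcal{P}_n=\alpha_n\mathcal{P}_{n+1}+(\alpha_n c_{n+1}-\beta_n)\mathcal{P}_n+\alpha_n\lambda_{n+1}\mathcal{P}_{n-1}$, so the vanishing of the bracket is equivalent to three scalar equations, one for each of $\mathcal{P}_{n+1},\mathcal{P}_n,\mathcal{P}_{n-1}$. The $\mathcal{P}_{n+1}$-equation and the $\mathcal{P}_{n-1}$-equation each yield an expression for $\alpha_n$, while the $\mathcal{P}_n$-equation then fixes $\beta_n$. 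The main obstacle, and the conceptual heart of the argument, is the compatibility of the two formulas for $\alpha_n$: eliminating $\alpha_n$ between the $\mathcal{P}_{n+1}$- and $\mathcal{P}_{n-1}$-equations (the latter carrying the factor $\lambda_{n+1}$) reproduces exactly the hypothesis \eqref{condition coeff. of quasi-type kernel order 2} relating $\tilde{L}_n,\tilde{M}_n$ to the ratios $\mathcal{P}_{n+2}(k_1)/\mathcal{P}_{n+1}(k_1)$, $\mathcal{P}_{n+2}(k_2)/\mathcal{P}_{n+1}(k_2)$ and $\mathcal{P}_{n+1}^*(k_2,k_3)/\mathcal{P}_n^*(k_2,k_3)$. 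In other words, \eqref{condition coeff. of quasi-type kernel order 2} is precisely the consistency condition that makes this over-determined $3\times 2$ linear system solvable; once it holds, $\alpha_n$ and $\beta_n$ are forced, which is the uniqueness claimed. One also notes $\alpha_n\neq 0$ because $\tilde{M}_n\neq 0$ for a genuine order-two polynomial, so $\alpha_n x-\beta_n$ is a bona fide degree-one factor.

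Finally I record that, with these forced choices, the bracket is identically zero, whence $(x-k_1)\mathcal{S}_{n+1}^*(k_1;x)-(x-k_2)(x-k_3)\mathcal{P}_n^*(k_2,k_3;x)=(\alpha_n x-\beta_n)\mathcal{P}_n(x)$ and therefore $\mathcal{Q}_n^S(k_1,k_2,k_3;x)=\mathcal{P}_n(x)$; by uniqueness of the monic orthogonal polynomial sequence associated with $\mathcal{L}$, the family $\{\mathcal{Q}_n^S\}$ carries the same orthogonality as $\{\mathcal{P}_n\}$. I expect the only genuinely delicate bookkeeping to be the double application of \eqref{kernel} for the iterated kernel and the careful tracking of the index shift in $\tilde{L},\tilde{M}$; the recognition that eliminating $\alpha_n$ regenerates \eqref{condition coeff. of quasi-type kernel order 2} is what drives the whole proof.
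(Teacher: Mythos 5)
Your proposal is correct and follows essentially the same route as the paper: absorb the denominator, expand both transformed pieces in the basis $\{\mathcal{P}_{n+2},\mathcal{P}_{n+1},\mathcal{P}_n,\mathcal{P}_{n-1}\}$ via \eqref{kernel} and the TTRR, and force the bracket to vanish by matching coefficients, which pins down $\alpha_n$ and $\beta_n$. Your added observation that \eqref{condition coeff. of quasi-type kernel order 2} is exactly the consistency condition reconciling the two expressions for $\alpha_n$ coming from the $\mathcal{P}_{n+1}$- and $\mathcal{P}_{n-1}$-coefficients is accurate and in fact makes explicit a point the paper leaves implicit.
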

\begin{remark}
	The iterated kernel polynomials sequence  for some $k_2\in \mathbb{C}_{\mp}, k_3 \in \mathbb{C}_{\pm}$ are given in \rm\cite[eq. 3.5]{Derevyagin_Bailey_CJM by DT}, where $\mathbb{C}_{\mp}=\{z:\text{Im}z\lessgtr0\}.$
\end{remark}

\begin{proof}If the sequence $\{\mathcal{Q}^S_n(k_1,k_2,k_3;x)\}$ is orthogonal with respect to the linear functional $\mathcal{L}$, then by uniqueness theorem of orthogonal polynomials, $\{\mathcal{Q}^S_n(k_1,k_2,k_3;x)\}$ and $\{\mathcal{P}_n(x)\}$ are the same system of orthogonal polynomials and vice-versa. We can write the expression \eqref{Recovery from quasi-type kernel order 2+iterated kernel} as
	\begin{align*}
		&{\mathcal{Q}}_n^{S}(k_1,k_2,k_3;x)=\frac{x-k_1}{\alpha_n x-\beta_n}\mathcal{S}_{n+1}^*(k_1;x)-\frac{(x-k_2)(x-k_3)}{\alpha_n x-\beta_n}\mathcal{P}_n^*(k_2,k_3;x)\\
		&=\frac{1}{\alpha_n x-\beta_n}\left[(x-k_1)\mathcal{S}_{n+1}^*(k_1;x)-(\alpha_n x-\beta_n)\mathcal{P}_n(x)-(x-k_2)(x-k_3)\mathcal{P}_n^*(k_2,k_3;x)\right]\\
		&\hspace{12cm}+\mathcal{P}_n(x).
	\end{align*}
	
	Using the definition of quasi-type kernel polynomial of order two and the expression of kernel and iterated kernel polynomials, we have
	\newline
	$\displaystyle (x-k_1)\mathcal{S}_{n+1}^*(k_1;x)-(\alpha_n x-\beta_n)\mathcal{P}_{n}(x)-(x-k_2)(x-k_3)\mathcal{P}_n^*(k_2,k_3;x)$
	\begin{align*}
	&=\mathcal{P}_{n+2}(x)-\frac{\mathcal{P}_{n+2}(k_1)}{\mathcal{P}_{n+1}(k_1)}\mathcal{P}_{n+1}(x)+\tilde{L}_n\mathcal{P}_{n+1}(x)-\tilde{L}_n\frac{\mathcal{P}_{n+1}(k_1)}{\mathcal{P}_{n}(k_1)}\mathcal{P}_{n}(x)+\tilde{M_n}\mathcal{P}_{n}(x)\\
	&-\tilde{M_n}\frac{\mathcal{P}_{n}(k_1)}{\mathcal{P}_{n-1}(k_1)}\mathcal{P}_{n-1}(x)-\alpha_n x\mathcal{P}_{n}(x)+\beta_n\mathcal{P}_{n}(x)-\mathcal{P}_{n+2}(x)+\frac{\mathcal{P}_{n+2}(k_2)}{\mathcal{P}_{n+1}(k_2)}\mathcal{P}_{n+1}(x)\\
	&\hspace{4cm}+\frac{\mathcal{P}_{n+1}^{*}(k_2,k_3)}{\mathcal{P}_{n}^{*}(k_2,k_3)}\mathcal{P}_{n+1}(x)-\frac{\mathcal{P}_{n+1}^{*}(k_2,k_3)}{\mathcal{P}_{n}^{*}(k_2,k_3)}\frac{\mathcal{P}_{n+1}(k_2)}{\mathcal{P}_{n}(k_2)}\mathcal{P}_{n}(x).
	\end{align*}

Since $\{\mathcal{P}_n(x)\}_{n=0}^{\infty}$ is a monic orthogonal polynomial sequence, we can use \eqref{TTRR} to write the expression for $x\mathcal{P}_n(x)$, which gives
\newline 	$\displaystyle (x-k_1)\mathcal{S}_{n+1}^*(k_1;x)-(\alpha_n x-\beta_n)\mathcal{P}_{n}(x)-(x-k_2)(x-k_3)\mathcal{P}_n^*(k_2,k_3;x)$
	\begin{align*}
		&=-\frac{\mathcal{P}_{n+2}(k_1)}{\mathcal{P}_{n+1}(k_1)}\mathcal{P}_{n+1}(x)+\tilde{L}_n\mathcal{P}_{n+1}(x)-\tilde{L}_n\frac{\mathcal{P}_{n+1}(k_1)}{\mathcal{P}_{n}(k_1)}\mathcal{P}_{n}(x)+\tilde{M_n}\mathcal{P}_{n}(x)\\
		&\hspace{.5cm}-\tilde{M_n}\frac{\mathcal{P}_{n}(k_1)}{\mathcal{P}_{n-1}(k_1)}\mathcal{P}_{n-1}(x)-\alpha_n \left[\mathcal{P}_{n+1}(x)+c_{n+1}\mathcal{P}_{n}(x)+ \lambda_{n+1}\mathcal{P}_{n-1}(x)\right]+\beta_n\mathcal{P}_{n}(x)\\
		&\hspace{.5cm}+\frac{\mathcal{P}_{n+2}(k_2)}{\mathcal{P}_{n+1}(k_2)}\mathcal{P}_{n+1}(x)+\frac{\mathcal{P}_{n+1}^{*}(k_2,k_3)}{\mathcal{P}_{n}^{*}(k_2,k_3)}\mathcal{P}_{n+1}(x)-\frac{\mathcal{P}_{n+1}^{*}(k_2,k_3)}{\mathcal{P}_{n}^{*}(k_2,k_3)}\frac{\mathcal{P}_{n+1}(k_2)}{\mathcal{P}_{n}(k_2)}\mathcal{P}_{n}(x).
	\end{align*}
	Combining the coefficients of $\mathcal{P}_{n+1}(x)$, $\mathcal{P}_{n}(x)$ and $\mathcal{P}_{n-1}(x)$, we get
	\newline    $\displaystyle (x-k_1)\mathcal{S}_{n+1}^*(k_1;x)-(\alpha_n x-\beta_n)\mathcal{P}_{n}(x)-(x-k_2)(x-k_3)\mathcal{P}_n^*(k_2,k_3;x)$
	\begin{align*}
&\hspace{0.5cm}=\left(-\frac{\mathcal{P}_{n+2}(k_1)}{\mathcal{P}_{n+1}(k_1)}+\frac{\mathcal{P}_{n+2}(k_2)}{\mathcal{P}_{n+1}(k_2)}+\tilde{L}_n-\alpha_n+\frac{\mathcal{P}_{n+1}^{*}(k_2,k_3)}{\mathcal{P}_{n}^{*}(k_2,k_3)}\right)\mathcal{P}_{n+1}(x)\\
&\hspace{0.5cm}+\left(-\tilde{M}_n\frac{\mathcal{P}_{n}(k_1)}{\mathcal{P}_{n-1}(k_1)}- \alpha_n \lambda_{n+1}\right)\mathcal{P}_{n-1}(x)+\left(-\tilde{L}_n\frac{\mathcal{P}_{n+1}(k_1)}{\mathcal{P}_{n}(k_1)}+\tilde{M_n}+\beta_n-\alpha_nc_{n+1}\right.\\
&\hspace{0.5cm}\qquad\left.-\frac{\mathcal{P}_{n+1}^{*}(k_2,k_3)}{\mathcal{P}_{n}^{*}(k_2,k_3)}\frac{\mathcal{P}_{n+1}(k_2)}{\mathcal{P}_{n}(k_2)}\right)\mathcal{P}_{n}(x).
	\end{align*}
	
	Setting the left side of the above equation equal to zero and by using the fact that $\mathcal{P}_{n+1}(x),\mathcal{P}_{n}(x) $ and $\mathcal{P}_{n-1}(x)$ are linearly independent we get that the coefficients must be equal to zero. This gives
		\begin{equation*}
		\begin{split}
	\alpha_n &=-\frac{1}{\lambda_{n+1}}\tilde{M_n}\frac{\mathcal{P}_n(k_1)}{\mathcal{P}_{n-1}(k_1)},\\ \beta_n&= \tilde{L}_n\frac{\mathcal{P}_{n+1}(k_1)}{\mathcal{P}_{n}(k_1)}-\tilde{M_n}+\lambda_{n+2}\left(\frac{\mathlarger{\sum}\limits_{j=0}^{n+1} \frac{\mathcal{P}_{j}(k_3)\mathcal{P}_{j}(k_2)}{\lambda_1 \lambda_2...\lambda_{j+1}}}{\mathlarger{\sum}\limits_{j=0}^{n} \frac{\mathcal{P}_{j}(k_3)\mathcal{P}_{j}(k_2)}{\lambda_1 \lambda_2...\lambda_{j+1}}}\right)-\frac{1}{\lambda_{n+1}}\frac{\mathcal{P}_n(k_1)}{\mathcal{P}_{n-1}(k_1)}c_{n+1}.
	\end{split}
	\end{equation*}
	
%
	
 We used the Christoffel-Darboux formula\cite{Chihara book} and the fact that zeros\cite{Szego} of $\mathcal{P}_{n}(x)$ lie on the real line for $n=1,2,...$ , for $k_2,k_3 \in \mathbb{C}_{\pm}$ to write the expression for $\beta_n$. This completes the proof.
 \end{proof}
\section{Ratio of kernel polynomials and continued fractions}\label{sec:Ratio}
While considering the quasi-type kernel polynomials of order two, \eqref{condition coeff. of quasi-type kernel order 2} provides the ratio of iterated kernel polynomials. Further, in Example \ref{Kernel of Chebyshev and reverse}, we are interested to find the behavior of the ratio of Chebyshev polynomial and Chebyshev kernel polynomial. To answer the above problem, we need the ratio of kernel polynomials. In particular, in this section, we are interested in the limiting case of ratio of kernel polynomials, which is addressed in Theorem \ref{th:Kernel}. For this, we require the confluent form of Christoffel-Darboux formula which we recall in  Theorem \ref{Confluent CD}. Then we discuss the ratio of kernel polynomials in terms of infinite continued fractions.
As specific cases we exhibit the ratio of kernel of Laguerre polynomials and Jacobi polynomials, in terms of, Confluent and Gaussian hypergeometric functions, respectively.

\begin{theorem}\rm{\cite{Chihara book}}\label{Confluent CD}
	Let \{$\mathcal{P}_n(x)\}_{n=1}^{\infty}$ be a sequence of monic orthogonal polynomials and $\lambda_{n}\neq 0$. Then
	\begin{align*}
		\mathlarger{\sum}\limits_{j=0}^{n} \frac{ \mathcal{P}_{j}^2(x)}{\lambda_1 \lambda_2...\lambda_{j+1}}=\frac{\mathcal{P}_{n+1}'(x)\mathcal{P}_{n}(x)-\mathcal{P}_{n+1}(x)\mathcal{P}_{n}'(x)}{\lambda_1 \lambda_2...\lambda_{n+1}}\cdot
	\end{align*}
\end{theorem}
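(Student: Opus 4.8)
The plan is to obtain the stated confluent identity as the diagonal limit $x' \to x$ of the Christoffel–Darboux identity \eqref{CD identity}. Starting from
\begin{align*}
\lambda_1\lambda_2\cdots\lambda_{n+1}\sum_{j=0}^{n}\frac{\mathcal{P}_j(x)\mathcal{P}_j(x')}{\lambda_1\lambda_2\cdots\lambda_{j+1}}=\frac{\mathcal{P}_{n+1}(x)\mathcal{P}_n(x')-\mathcal{P}_{n+1}(x')\mathcal{P}_n(x)}{x-x'},
\end{align*}
I would let $x' \to x$. The left-hand side is a polynomial in $x'$, so it converges immediately to $\lambda_1\cdots\lambda_{n+1}\sum_{j=0}^n \mathcal{P}_j^2(x)/(\lambda_1\cdots\lambda_{j+1})$, which is exactly the quantity we want up to the normalizing constant $\lambda_1\cdots\lambda_{n+1}$.

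The only point requiring care is the right-hand side, which presents the indeterminate form $0/0$ at $x'=x$ because the numerator vanishes there. First I would treat the differencing as being in the variable $x'$, and then apply either a first-order Taylor expansion of $\mathcal{P}_{n+1}(x')$ and $\mathcal{P}_n(x')$ about $x'=x$ or, equivalently, L'H\^opital's rule, differentiating numerator and denominator with respect to $x'$. Differentiating the numerator gives $\mathcal{P}_{n+1}(x)\mathcal{P}_n'(x')-\mathcal{P}_{n+1}'(x')\mathcal{P}_n(x)$ and the denominator gives $-1$, so the limit of the right-hand side equals $\mathcal{P}_{n+1}'(x)\mathcal{P}_n(x)-\mathcal{P}_{n+1}(x)\mathcal{P}_n'(x)$. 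Dividing both sides by $\lambda_1\cdots\lambda_{n+1}$ yields the claim.

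The main obstacle here is essentially bookkeeping rather than conceptual: one must track the sign contributed by the $x-x'$ denominator and be careful about which factor is differentiated, since interchanging them flips the overall sign. Because every $\mathcal{P}_j$ is a polynomial and hence entire, the limit is automatically justified with no convergence or interchange issues, so no additional analytic input is needed.

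As an alternative I would note that the identity can also be established by induction on $n$ using the TTRR \eqref{TTRR} together with its term-by-term derivative; the base case $n=0$ reduces to $\mathcal{P}_0^2/\lambda_1 = (\mathcal{P}_1'\mathcal{P}_0-\mathcal{P}_1\mathcal{P}_0')/\lambda_1$, which holds since $\mathcal{P}_0=1$ and $\mathcal{P}_1$ is monic of degree one. I would nonetheless favor the confluent-limit route, as it sidesteps the algebra of the inductive step and follows in one stroke from the already-recorded identity \eqref{CD identity}.
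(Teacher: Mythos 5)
Your argument is correct: the paper states this as a cited result from Chihara with no proof of its own, and the standard derivation there is exactly your confluent limit $x'\to x$ of the Christoffel--Darboux identity \eqref{CD identity}, with the sign bookkeeping you carry out matching the stated formula. Nothing further is needed.
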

\noindent Now we will compute the ratio of kernel polynomials as $x$ approaches $k$.

\begin{theorem}\label{th:Kernel}
	Let $\{\mathcal{P}_n^*(k;x)\}_{n=0}^{\infty}$ be a sequence of kernel polynomials that exists for some $k\in \mathbb{C}$. Then
	\begin{align}\label{kernel ratio}
		\lim\limits_{x\rightarrow k}\frac{\mathcal{P}_{n+1}^*(k;x)}{\mathcal{P}_n^*(k;x)}=\frac{\mathcal{P}_n(k)}{\mathcal{P}_{n+1}(k)}\lambda_{n+2}\left(1+\frac{1}{\lambda_1 \lambda_2...\lambda_{n+2}}\frac{\mathcal{P}_{n+1}^2(k)}{\mathlarger{\sum}\limits_{j=0}^{n} \frac{ \mathcal{P}_{j}^2(k)}{\lambda_1 \lambda_2...\lambda_{j+1}}}\right)
	\end{align}
	and
	\begin{equation}\label{kernel ratio 2}
		\lim\limits_{x\rightarrow k}\frac{\mathcal{P}_{n}^*(k;x)}{\mathcal{P}_{n+1}^*(k;x)}=\frac{\mathcal{P}_{n+1}(k)}{\mathcal{P}_n(k)}\frac{1}{\lambda_{n+2}}\left(1-\frac{\mathcal{P}_{n+1}^2(k)}{\lambda_1 \lambda_2...\lambda_{n+2}\mathlarger{\sum}\limits_{j=0}^{n+1} \frac{ \mathcal{P}_{j}^2(k)}{\lambda_1 \lambda_2...\lambda_{j+1}}}\right).
	\end{equation}
\end{theorem}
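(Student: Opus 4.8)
The plan is to exploit the fact that the factor $(x-k)^{-1}$ in the definition \eqref{kernel} of $\mathcal{P}_n^*(k;x)$ produces a $0/0$ indeterminate form as $x\to k$, since the bracketed numerator vanishes at $x=k$. Rather than expanding this numerator directly, I would resolve the indeterminacy by L'H\^{o}pital's rule and then recognise the resulting Wronskian-type expression as the numerator appearing in the confluent Christoffel--Darboux formula of Theorem \ref{Confluent CD}. Alternatively, one may simply invoke the representation \eqref{Other form of kernel polynomial}, in which $\mathcal{K}_n(x,k)$ is a genuine polynomial in $x$ and hence continuous at $x=k$; both routes lead to the same closed form for the value $\mathcal{P}_n^*(k;k)$.

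Concretely, I would first differentiate numerator and denominator of \eqref{kernel} and let $x\to k$ to obtain
\[
	\mathcal{P}_n^*(k;k)=\mathcal{P}_{n+1}'(k)-\frac{\mathcal{P}_{n+1}(k)}{\mathcal{P}_n(k)}\mathcal{P}_n'(k)
	=\frac{\mathcal{P}_{n+1}'(k)\mathcal{P}_n(k)-\mathcal{P}_{n+1}(k)\mathcal{P}_n'(k)}{\mathcal{P}_n(k)}.
\]
Applying Theorem \ref{Confluent CD} to the numerator then gives the compact evaluation
\[
	\mathcal{P}_n^*(k;k)=\frac{\lambda_1\lambda_2\cdots\lambda_{n+1}}{\mathcal{P}_n(k)}\sum_{j=0}^{n}\frac{\mathcal{P}_j^2(k)}{\lambda_1\lambda_2\cdots\lambda_{j+1}},
\]
which is exactly \eqref{Other form of kernel polynomial} specialised at $x=k$.

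With this in hand, I would form the ratio for the indices $n+1$ and $n$, cancel the common product $\lambda_1\cdots\lambda_{n+1}$ and reorganise the factors in $\mathcal{P}_n(k)$, arriving at
\[
	\lim_{x\to k}\frac{\mathcal{P}_{n+1}^*(k;x)}{\mathcal{P}_n^*(k;x)}
	=\lambda_{n+2}\,\frac{\mathcal{P}_n(k)}{\mathcal{P}_{n+1}(k)}\,
	\frac{\sum_{j=0}^{n+1}\mathcal{P}_j^2(k)/(\lambda_1\cdots\lambda_{j+1})}{\sum_{j=0}^{n}\mathcal{P}_j^2(k)/(\lambda_1\cdots\lambda_{j+1})}.
\]
To reach \eqref{kernel ratio} I would split off the $j=n+1$ term in the numerator, writing the numerator sum as the denominator sum plus $\mathcal{P}_{n+1}^2(k)/(\lambda_1\cdots\lambda_{n+2})$, which turns the quotient of sums into the bracketed $1+(\cdots)$ expression. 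For \eqref{kernel ratio 2} I would take the reciprocal of the same ratio and instead write the smaller partial sum as the larger one minus $\mathcal{P}_{n+1}^2(k)/(\lambda_1\cdots\lambda_{n+2})$, producing the $1-(\cdots)$ form.

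The only genuine obstacle is justifying that the limit exists and is finite, i.e. that the denominator $\mathcal{P}_n^*(k;k)$ does not vanish, equivalently that $\sum_{j=0}^{n}\mathcal{P}_j^2(k)/(\lambda_1\cdots\lambda_{j+1})\neq 0$. In the positive-definite case this is immediate, since each $\lambda_j>0$ and $\mathcal{P}_n(k)\neq 0$ is already required for the kernel polynomials to exist, so the sum is strictly positive. In the merely quasi-definite setting this positivity is not automatic, and one has to argue that the standing hypotheses guaranteeing existence of $\{\mathcal{P}_n^*(k;x)\}$ preclude the degenerate vanishing of this partial sum; this is the step I would treat most carefully.
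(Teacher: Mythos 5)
Your proposal is correct and follows essentially the same route as the paper: resolve the $0/0$ form in the definition \eqref{kernel} by differentiation, identify the resulting Wronskian via the confluent Christoffel--Darboux formula of Theorem \ref{Confluent CD}, and split off the $j=n+1$ term of the partial sum to obtain the $1\pm(\cdots)$ forms. The only (welcome) addition is your explicit remark that nonvanishing of $\sum_{j=0}^{n}\mathcal{P}_j^2(k)/(\lambda_1\cdots\lambda_{j+1})$ is automatic in the positive-definite case but needs care in the quasi-definite setting, a point the paper's proof passes over silently.
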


\begin{proof}Using the Definition \ref{kernel} and Theorem \ref{Confluent CD}, we have
	\begin{align*}
		\lim\limits_{x\rightarrow k}\frac{\mathcal{P}_{n+1}^*(k;x)}{\mathcal{P}_n^*(k;x)}&=\frac{\mathcal{P}_n(k)}{\mathcal{P}_{n+1}(k)}\lim\limits_{x\rightarrow k}\left(\frac{\mathcal{P}_{n+2}(x)\mathcal{P}_{n+1}(k)-\mathcal{P}_{n+2}(k)\mathcal{P}_{n+1}(x)}{\mathcal{P}_{n+1}(x)\mathcal{P}_{n}(k)-\mathcal{P}_{n+1}(k)\mathcal{P}_{n}(x)}\right)\\
		&= \frac{\mathcal{P}_n(k)}{\mathcal{P}_{n+1}(k)}\left(\frac{\mathcal{P}_{n+2}'(k)\mathcal{P}_{n+1}(k)-\mathcal{P}_{n+2}(k)\mathcal{P}_{n+1}'(k)}{\mathcal{P}_{n+1}'(k)\mathcal{P}_{n}(k)-\mathcal{P}_{n+1}(k)\mathcal{P}_{n}'(k)}\right)\\
		&= \frac{\mathcal{P}_n(k)}{\mathcal{P}_{n+1}(k)}\lambda_{n+2}\left(\frac{\mathlarger{\sum}\limits_{j=0}^{n+1} \frac{\mathcal{P}_{j}^2(k)}{\lambda_1 \lambda_2...\lambda_{j+1}}}{\mathlarger{\sum}\limits_{j=0}^{n} \frac{\mathcal{P}_{j}^2(k)}{\lambda_1 \lambda_2...\lambda_{j+1}}}\right) \\
		&=\frac{\mathcal{P}_n(k)}{\mathcal{P}_{n+1}(k)}\lambda_{n+2}\left(1+\frac{\mathcal{P}_{n+1}^2(k)}{\lambda_1 \lambda_2...\lambda_{n+2}\mathlarger{\sum}\limits_{j=0}^{n} \frac{ \mathcal{P}_{j}^2(k)}{\lambda_1 \lambda_2...\lambda_{j+1}}}\right).
	\end{align*}
	In similar lines, we can obtain \eqref{kernel ratio 2}.	
	This completes the proof.
\end{proof}

\begin{corollary}\label{Ratio kernel Chebyshev}
	Let $\{\hat{\mathcal{C}}_n(x)\}_{n=0}^{\infty}$ be a sequence of monic Chebyshev polynomials of first kind. Then
	\begin{align*}
		\lim\limits_{n\rightarrow \infty}\lim\limits_{x\rightarrow 1}\frac{\hat{\mathcal{C}}_{n+1}^*(1;x)}{\hat{\mathcal{C}}_{n}^*(1;x)}=\frac{1}{2}.
	\end{align*}
\end{corollary}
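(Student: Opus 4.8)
The plan is to specialize the limit formula \eqref{kernel ratio} of Theorem \ref{th:Kernel} to the monic Chebyshev polynomials of the first kind at $k=1$, and only afterwards let $n\to\infty$. The first step is to read off the recurrence data from the TTRR recorded in Example \ref{Kernel of Chebyshev and reverse}: comparing it with \eqref{TTRR} shows that the relevant parameters are $\lambda_2=\tfrac12$ and $\lambda_m=\tfrac14$ for every $m\geq 3$. The parameter $\lambda_1$ is merely a normalization (the zeroth moment), and, as I note below, it cancels in \eqref{kernel ratio}, so its value is immaterial.

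Next I would evaluate the polynomials at $k=1$. From $\hat{\mathcal{C}}_{n+1}=2^{-n}\mathcal{C}_{n+1}$ together with $\mathcal{C}_n(1)=1$ (equivalently, a one-line induction on the TTRR) one obtains $\hat{\mathcal{C}}_0(1)=1$ and $\hat{\mathcal{C}}_n(1)=2^{1-n}$ for $n\geq1$. In particular the prefactor in \eqref{kernel ratio} is constant in $n$: for every $n\geq1$,
\[
\frac{\hat{\mathcal{C}}_n(1)}{\hat{\mathcal{C}}_{n+1}(1)}\,\lambda_{n+2}=\frac{2^{1-n}}{2^{-n}}\cdot\frac14=2\cdot\frac14=\frac12 .
\]

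It then remains to control the bracketed correction term. Writing $S_n:=\sum_{j=0}^{n}\hat{\mathcal{C}}_j^2(1)\big/(\lambda_1\cdots\lambda_{j+1})$, I would first observe that $\lambda_1$ factors out of both $S_n$ and the prefactor $\lambda_1\cdots\lambda_{n+2}$, and hence cancels. A direct evaluation then shows that every summand with $j\geq1$ equals $2$ (the decay $\hat{\mathcal{C}}_j^2(1)=4^{1-j}$ is exactly balanced by $\lambda_1\cdots\lambda_{j+1}$, which decays like $4^{-j}$), while the $j=0$ term equals $1$; thus $S_n=2n+1$. Likewise $\hat{\mathcal{C}}_{n+1}^2(1)\big/(\lambda_1\cdots\lambda_{n+2})=2$, so the correction term is $2/S_n=2/(2n+1)$ and the bracket in \eqref{kernel ratio} equals $1+\tfrac{2}{2n+1}$.

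Combining these, the inner limit is
\[
\lim_{x\to1}\frac{\hat{\mathcal{C}}_{n+1}^*(1;x)}{\hat{\mathcal{C}}_n^*(1;x)}=\frac12\left(1+\frac{2}{2n+1}\right)=\frac{2n+3}{2(2n+1)},
\]
and letting $n\to\infty$ yields the claimed value $\tfrac12$. For the outer limit one in fact only needs $S_n\to\infty$, which is immediate since each term with $j\geq1$ is the positive constant $2$; the closed form $S_n=2n+1$ is a bonus that also pins down the rate of convergence. The one place demanding genuine care—the main obstacle—is the book-keeping at the start of the recurrence, where $\lambda_2=\tfrac12$ differs from the constant tail value $\tfrac14$: I must check that this exceptional first step does not disturb the uniform cancellations above, and confirm the $\lambda_1$-independence of the whole expression. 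Everything else is a routine substitution into \eqref{kernel ratio}.
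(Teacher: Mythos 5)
Your proposal is correct and follows essentially the same route as the paper: specialize Theorem \ref{th:Kernel} to the monic Chebyshev data at $k=1$ and then let $n\to\infty$. One point worth flagging: your intermediate value $\tfrac12\bigl(1+\tfrac{2}{2n+1}\bigr)$ disagrees with the paper's displayed $\tfrac12\bigl(1+\tfrac{4}{2n+1}\bigr)$, and a direct check at $n=0$ (where $\hat{\mathcal{C}}_1^*(1;x)=x+\tfrac12$, so the ratio tends to $\tfrac32$, not $\tfrac52$) and at $n=1$ confirms that your constant is the correct one; the paper's formula carries a factor-of-$2$ slip in the correction term, which is harmless for the corollary since both expressions tend to $\tfrac12$. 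Your handling of the exceptional value $\lambda_2=\tfrac12$ versus the tail $\lambda_m=\tfrac14$, and the cancellation of $\lambda_1$, is exactly the bookkeeping needed and is done correctly.
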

\begin{proof}
	By using Theorem \ref{th:Kernel}, we have
	$$\lim\limits_{x\rightarrow 1}\frac{\hat{\mathcal{C}}_{n+1}^*(1;x)}{\hat{\mathcal{C}}_{n}^*(1;x)}=\frac{1}{2}\left(1+\frac{4}{2n+1}\right).$$
	Allowing $n\rightarrow\infty$, we get the desired result.
\end{proof}


Next, we will discuss the link between the ratio of kernel polynomials and infinite continued fractions. For this, we first need the definition of the hypergeometric functions.\\
The Gauss hypergeometric function 
${}_2F_1(p,q;r;z)$ is given by
\begin{align}
	F(p,q;r;z):= {}_2F_1(p,q;r;z)= \sum\limits_{n=0}^{\infty}\frac{(p)_n(q)_n}{(r)_nn!}z^n~ \text{ for } ~r\not\in \{0,-1,-2,...\},
\end{align}
where the symbol $(\cdot)_n$ is known as Pochhammer symbol  and is defined as
\begin{equation*}
	(p)_n=p(p+1)(p+2)...(p+n-1)=\frac{\Gamma(p+n)}{\Gamma p}, ~\text{with}~ (p)_0=1.
\end{equation*}

The above series converges absolutely in $\{z\in \mathbb{C}: |z| <1\}.$ Further we can analytically continue the series as a single valued function everywhere except any path joining the branch points 1 and infinity\cite{Andrews_Askey_Ranjan_Special functions}.

Note that if we take either $p$ or $q$ to be a negative integer, the terms of the series will vanish after some stage and we will be left with a finite linear combination of monomials. If this happens, the convergence of the hypergeometric series is not an issue.\\

If we replace $z$ by $z/q$ and allow $q \rightarrow \infty$, then by using
\begin{equation*}
	\lim\limits_{n\rightarrow \infty}\frac{(q)_n}{q^n}=1,
\end{equation*}  we obtain the \emph{Kummer or confluent hypergeometric function}
\begin{equation*}
	\phi(p;r;z):={}_1F_1(p;r; z) = \lim\limits_{q\rightarrow \infty}F(p, q; r; z/q) = \sum\limits_{n=0}^{\infty}\frac{(p)_n}{(r)_nn!}z^n~ \text{ for } ~r\not\in \{0,-1,-2,...\}.
\end{equation*}

We shall use the following contiguous relation satisfied by the Gaussian hypergeometric function to obtain the continued fraction of ratio of hypergeometric functions.
\begin{align}
	F(p+1,q;r;z)&=F(p,q;r;z)-\frac{q}{r}zF(p+1,q+1;r+1;z)\label{Contiguous R1},\\
	F(p,q;r;z)&=F(p,q+1;r+1;z)-\frac{p(r-q)}{r(r+1)}zF(p+1,q+1;r+2;z),\label{Contiguous R2}\\
	\nonumber F(p,q+1;r+1;z)&=F(p+1,q+1;r+2;z)\\
	&\hspace{2.5cm}-\frac{(q+1)(r-p+1)}{(r+1)(r+2)}zF(p+1,q+2;r+3;z)\label{Contiguous R3}.
\end{align}

We can use \eqref{Contiguous R1}-\eqref{Contiguous R3} to get the ratio of Gauss hypergeometric functions\cite[p. 337]{Wall}(see also \cite{Kustner 2002,Derevyagin_2018_JDEA}).
\begin{align}\label{Gauss}
	\frac{F(p+1, q;r; z)}{F(p, q; r; z)} = \frac{1}{1}\fminus\frac{\left(1-g_{0}\right) g_{1} z}{1}\fminus\frac{\left(1-g_{1}\right) g_{2} z}{1}\fminus\frac{\left(1-g_{2}\right) g_{3} z}{1}\fminus\cdots
\end{align}
with$$
g_{j}=g_{j}(p, q, r):= \begin{cases}0 & \text { for } j=0, \\ \frac{p+k}{r+2 k-1} & \text { for } j=2 k \geq 2, k \geq 1, \\ \frac{q+k-1}{r+2 k-2} & \text { for } j=2 k-1 \geq 1, k \geq 1.\end{cases}
$$
Hence, we can write the ratio of Kummer hypergeometric functions as a limit of the ratio of Gauss hypergeometric functions by

\begin{align}\label{ratio}
	\frac{\phi(p+1;r; z)}{\phi(p;r; z)}= \lim\limits_{q\rightarrow \infty}\frac{F(p+1, q;r; z/q)}{F(p, q; r; z/q)} = \frac{1}{1}\fminus\frac{d_1z}{1}\fminus\frac{d_2 z}{1}\fminus\frac{d_3z}{1}\fminus\cdots
\end{align}
with
$d_{j}= \lim\limits_{q\rightarrow \infty}\frac{\left(1-g_{j-1}\right)g_{j}}{q} $ for all $j \geq 1$. So
$$
d_{j}=d_{j}(p, r):= \begin{cases}\frac{1}{r} & \text { for } j=1, \\ \frac{-(p+k)}{(r+2 k-1)(r+2 k-2)} & \text { for } j=2 k,  k \geq 1, \\ \frac{r-p+k-1}{(r+2 k-1)(r+2 k-2)} & \text { for } j=2 k-1,  k \geq 2.\end{cases}
$$

If we put $p=-n, r= \gamma +2$ and $z=-x$ in \eqref{ratio}, we obtain
\begin{align}\label{Continued Laguerre}
	\frac{\phi(-n+1;\gamma +2; -x)}{\phi(-n;\gamma +2; -x)} = \frac{1}{1}\fplus\frac{\tilde{d}_1x}{1}\fplus\frac{\tilde{d}_2 x}{1}\fplus\frac{\tilde{d}_3x}{1}\fplus\cdots
\end{align} with

\begin{align}\label{CF coeffcients}
	\tilde{d}_{j}=\tilde{d}_{j}(-n, \gamma +2):= \begin{cases}\frac{1}{\gamma +2} & \text { for } j=1, \\ \frac{(n-k)}{(\gamma +2 k)(\gamma +2 k+1)} & \text { for } j=2 k, k \geq 1, \\ \frac{\gamma +n+k+1}{(\gamma +2 k)(\gamma +2 k+1)} & \text { for } j=2 k-1, k \geq 2.\end{cases}
\end{align}

\subsection{Kernel of Laguerre polynomials}
We know that Laguerre polynomials with parameter $\gamma$ can be written in the form of Kummer hypergeometric functions\cite{Bateman}.
$$
L_{n}^{\gamma}(x)=\left(\begin{array}{c}
	n+\gamma \\
	n
\end{array}\right){}_1F_{1}(-n ; \gamma+1 ; x), \quad n= 0,1,2,....
$$
$\{L_{n}^{\gamma}(x)\}_{n=0}^{\infty}$ forms an  orthogonal system  on $[0,+\infty)$ with respect to the weight function $w(x)=x^{\gamma} e^{-x}, \gamma>$ $-1.$

We can normalize the Laguerre polynomials and define 
$$
\mathbb{L}_{n}(x)=\mathbb{L}_{n}(\gamma ; x):=\frac{1}{\sqrt{\Gamma(\gamma+1)\left(\begin{array}{c}
			n+\gamma\\
			n
		\end{array}\right)}} L_{n}^{\gamma}(-x), \quad  n= 0,1,2,....
$$
$\{\mathbb{L}_{n}(x)\}_{n=0}^{\infty}$ forms an orthonormal  system on $(-\infty, 0]$ with respect to the weight function $(-x)^{\gamma} e^{x}$\cite{Szego}.
\newline
Considering \eqref{Other form of kernel polynomial} for the Laguerre polynomials with the particular value  $k=0$, we get 
$$L_n^{\gamma ^*}(0;x)= \lambda_1\lambda_2...\lambda_{n+1}(L_{n}^{\gamma}(0))^{-1}\mathbb{L}_{n}(\gamma+1 ; x),$$
where $\lambda_{n+1}=n(n+\gamma)$\cite[p. 154]{Chihara book}.
\newline
So, the ratio of the kernel of Laguerre polynomials for $k=0$ is given by

$$\frac{L_{n-1}^{\gamma ^*}(0;x)}{L_n^{\gamma ^*}(0;x)}=\frac{1}{\lambda_{n+1}}\frac{L_{n}^{\gamma}(0)\mathbb{L}_{n-1}(\gamma+1 ; x)}{L_{n-1}^{\gamma}(0)\mathbb{L}_{n}(\gamma+1 ; x)}.$$
We can write the above expression as
$$\frac{L_{n-1}^{\gamma ^*}(0;x)}{L_n^{\gamma ^*}(0;x)}=\frac{1}{n^2}\sqrt{\frac{\mathbb{B}(n,\gamma +2)}{n\mathbb{B}(n,\gamma +1)}}\frac{\phi(-n+1;\gamma +2; -x)}{\phi(-n;\gamma +2; -x)},$$
where $\mathbb{B}(\cdot ,\cdot)$ denotes the well-known Beta function.

Hence, we can use \eqref{Continued Laguerre} to obtain the ratio of kernel of Laguerre polynomials for $k=0$ in terms of the continued fractions as
$$\frac{L_{n-1}^{\gamma ^*}(0;x)}{L_n^{\gamma ^*}(0;x)}=\frac{1}{n^2}\sqrt{\frac{\mathbb{B}(n,\gamma +2)}{n\mathbb{B}(n,\gamma +1)}}\left(\frac{1}{1}\fplus\frac{\tilde{d}_1x}{1}\fplus\frac{\tilde{d}_2 x}{1}\fplus\frac{\tilde{d}_3x}{1}\fplus\cdots\right),$$
where $\tilde{d}_{j}$'s are given by \eqref{CF coeffcients}.

Similarly we can express the ratio of the kernels  of Laguerre polynomials with different parameteric value of $\gamma>0$ for $k=0$ as

$$\frac{L_{n-1}^{\gamma ^*}(0;x)}{L_n^{(\gamma-1) ^*}(0;x)}=\frac{\lambda_1\lambda_2...\lambda_{n}}{\tilde{\lambda}_1\tilde{\lambda}_2...\tilde{\lambda}_{n+1}}\frac{L_{n}^{(\gamma-1)}(0)\mathbb{L}_{n-1}(\gamma+1 ; x)}{L_{n-1}^{\gamma}(0)\mathbb{L}_{n}(\gamma ; x)},$$
where $\tilde{\lambda}_{n+1}=n(n+\gamma-1).$\\
The above ratio can be simplified as
\begin{align}
	\frac{L_{n-1}^{\gamma ^*}(0;x)}{L_n^{(\gamma-1) ^*}(0;x)}=\frac{\gamma^2}{n^{3/2}(n+\gamma)(\gamma +1)(n+\gamma -1)}\frac{\phi(-n+1;\gamma +2; -x)}{\phi(-n;\gamma +1; -x)}.
\end{align}
Now, we can use \cite[eq. 91.1]{Wall} to obtain
\begin{align}
	\frac{L_{n-1}^{\gamma ^*}(0;x)}{L_n^{(\gamma-1) ^*}(0;x)}=\frac{\gamma^2}{n^{3/2}(n+\gamma)(\gamma +1)(n+\gamma -1)}\left(\frac{1}{1}\fplus\frac{d'_1x}{1}\fminus\frac{d'_2 x}{1}\fplus\frac{d'_3x}{1}\fminus\cdots\right),
\end{align}
where $$d'_{j}=d'_{j}(n, r):= \begin{cases} \frac{n+k+\gamma +1}{(\gamma +2 k+1)(\gamma +2 k+2)} & \text { for } j=2 k+1, k \geq 0, \\ \frac{1-n+k}{(\gamma +2 k+1)(\gamma +2 k+2)} & \text { for } j=2 k+2 , k \geq 0.\end{cases}$$
\subsection{Kernel of Jacobi polynomials}
We know that the Jacobi polynomials with parameter $(\gamma,\delta)$ can be written in the form of Gauss hypergeometric functions \cite{Markett_2022_JAT}

$$
P_{n}^{(\gamma, \delta)}(x)=\left(\begin{array}{c}
	n+\gamma \\
	n
\end{array}\right)F\left(-n , n+\gamma+\delta+1;\gamma +1 ; \frac{1-x}{2}\right), \quad n \in \mathbb{Z}_{+}.
$$
Note that $\{P_{n}^{(\gamma,\delta)}(x)\}_{n=0}^{\infty}$ forms an  orthogonal system  on $[-1,1]$ with respect to the weight function $w(x)=(1-x)^{\gamma}(1+x)^{\delta}, \gamma,\delta >$ $-1.$\\
The normalisation 
\begin{align}
	\tilde{P}_n^{(\gamma ,\delta)}(x)= \sqrt{\frac{(2n+\gamma+\delta+1)\Gamma(n+1)\Gamma(n+\gamma+\delta+1)}{2^{\gamma +\delta +1}\Gamma(n+\gamma+1)\Gamma(n+\delta+1)}} P_n^{(\gamma, \delta)}(x)
\end{align}
provides the sequence $\{\tilde{P}_{n}^{(\gamma,\delta)}(x)\}_{n=0}^{\infty}$ as an  orthonormal system  on $[-1,1]$ with respect to the weight function $w(x)=(1-x)^{\gamma}(1+x)^{\delta}, \gamma,\delta >$ $-1.$
\newline
Considering \eqref{Other form of kernel polynomial} for the Jacobi polynomials with the particular value  $k=1$, we get 

$$P_n^{(\gamma,\delta) ^*}(1;x)= \lambda_1\lambda_2...\lambda_{n+1}(P_{n}^{(\gamma,\delta)}(1))^{-1}\tilde{P}_{n}^{(\gamma+1,\delta)} (x),$$
where \text{\cite[p. 153]{Chihara book}}
\begin{align*}
	\lambda_{n+1}=\frac{4n(n+\gamma)(n+\delta)(n+\gamma+\delta)}{(2n+\gamma+\delta)^2(2n+\gamma+\delta+1)(2n+\gamma +\delta-1)}.
\end{align*}
Hence, the ratio of the kernel of Jacobi polynomials with parameters $\gamma >-1, \delta>0$ for $k=1$ is given by
\begin{align*}
	\frac{P_{n-1}^{(\gamma+1,\delta) ^*}(1;x)}{P_n^{(\gamma+1,\delta -1) ^*}(1;x)}=\frac{\lambda_1\lambda_2...\lambda_{n}}{\tilde{\lambda}_1\tilde{\lambda}_2...\tilde{\lambda}_{n+1}}\frac{P_n^{(\gamma+1,\delta -1)}(1)\tilde{P}_{n-1}^{(\gamma +1,\delta)}(x)}{P_{n-1}^{(\gamma+1,\delta) }(1)\tilde{P}_{n}^{(\gamma +1,\delta-1)}(x)}.
\end{align*}
The above ratio can be simplified as
\begin{align}
	\frac{P_{n-1}^{(\gamma+1,\delta) ^*}(1;x)}{P_n^{(\gamma+1,\delta -1) ^*}(1;x)}=C(n,\gamma,\delta)\frac{F\left(-n+1 , n+\gamma+\delta+1;\gamma +2 ; \frac{1-x}{2}\right)}{F\left(-n , n+\gamma+\delta+1;\gamma +2 ; \frac{1-x}{2}\right)},
\end{align}
where

\begin{align*}
	C(n, \gamma ,\delta)=\sqrt{\frac{(\gamma +\delta +2)^2(2n+\gamma +\delta +1)(2n+\gamma +\delta )^3}{32n^3(n+\gamma +1)(\gamma +1)^2\delta^2}}.
\end{align*}
Hence, we can use \eqref{Gauss} to write the ratio of kernel of Jacobi polynomials in terms of continued fractions as
\begin{align}
	\nonumber \frac{P_{n-1}^{(\gamma+1,\delta) ^*}(1;x)}{P_n^{(\gamma+1,\delta -1) ^*}(1;x)}=C(n,\gamma,\delta)\left(\frac{1}{1}\fminus\frac{\left(1-e_{0}\right) e_{1} (\frac{1-x}{2})}{1}\fminus\frac{\left(1-e_{1}\right) e_{2} (\frac{1-x}{2})}{1}\right.\\
	\left. \fminus\frac{\left(1-e_{2}\right) e_{3} (\frac{1-x}{2})}{1}\fminus\cdots\right),
\end{align}
with
\begin{align}\label{CF-Jacobi-chain-coefficients}
e_{j}=e_{j}(n,\gamma,\delta):= \begin{cases}0 & \text { for } j=0, \\ \frac{-n+k}{\gamma+2 k+1} & \text { for } j=2 k, k \geq 1, \\ \frac{n+\gamma +\delta+k}{\gamma+2 k} & \text { for } j=2 k-1 , k \geq 1.\end{cases}
\end{align}
\section{Concluding remarks}
In this work the quasi-type kernel polynomials are introduced and one of our main objective that was established was the following. Given a quasi-type kernel polynomial, to find a suitable orthogonal polynomial which is an outcome of specific spectral transformation, whose linear combination with the quasi-type kernel polynomial recovers the orthogonality property. Besides this, several observations are made which are useful for future research and the same is outlined in this section.

The identity \eqref{OP in terms of Kernel} in Proposition $\ref{Prop:Quasi Kernel TTRR for CDKernel}$ was proved using TTRR \eqref{TTRR} and the same was established using Christoffel-Darboux kernel \eqref{CD identity} in \cite[eq. 2.5]{Kernel polynomials_Paco_2001}. Hence, it would be interesting to revisit many other results proved in the literature using Christoffel-Darboux kernel \eqref{CD identity}, and give an attempt to prove using TTRR.

	In the hypothesis of Theorem \ref{Recovery from iterated kernel and quasi-type kernel}, we required  the coefficients $\tilde{L}_n$ and $\tilde{M}_n$ of quasi-type kernel polynomial of order two to satisfy the expression \eqref{condition coeff. of quasi-type kernel order 2}. As a result, we obtained two unique sequences of constants $\alpha_n$ and $\beta_n$ that are useful in recovering the orthogonality given by the polynomial $\mathcal{P}_n(x)$. It would be interesting to remove the hypothesis of this particular choice of the coefficients $\tilde{L}_n$ and $\tilde{M}_n$. More specifically, we end this point of discussion with the following question.
	\begin{Problem}
		Is it possible to obtain three sequences of constants so that relaxation of the hypothesis \eqref{condition coeff. of quasi-type kernel order 2} is permissible?
		\end{Problem}

In theorems $\ref{QK+OP+KP}$, $\ref{QK+OP+Geronimus}$, $\ref{QK+OP+Uvarov}$ and $\ref{Recovery from iterated kernel and quasi-type kernel}$, the quasi-type Kernel polynomial is written in combination with a specific spectral transformed polynomial and it has been established that the resultant polynomial has the same orthogonality given by the moment functional ${\mathcal{L}}$. This leads to the question of decomposing the original orthogonal polynomial $\{P_n\}$, given by the moment functional ${\mathcal{L}}$ into the linear combination of quasi-type Kernel polynomial and another orthogonal polynomial, related to the given orthogonal polynomial and the relation between these decompositions. Hence we propose the following problem.
\begin{Problem}
To find the conditions under which an orthogonal polynomial can be decomposed into two parts, viz., a quasi-type kernel polynomial and a specific orthogonal polynomial related to the given polynomial.
\end{Problem}
It is expected that the decomposed part of the orthogonal polynomial, from the proved results, is a specific spectral transformation of the given orthogonal polynomial. However, it may be some other orthogonal polynomial with different properties, other than the spectral transformation of the given polynomial. Further, it is possible that the decomposed orthogonal polynomial and the quasi-type kernel polynomial have an orthogonality between them, leading to the biorthogonality property given in the sense of Konhauser\cite{Konhauser-biorthogonal-JMAA-1965}. For details of this biorthogonality, we refer to \cite{Kiran-Swami-PAMS, Konhauser-biorthogonal-JMAA-1965}. We formulate this as another problem.
\begin{Problem}
Given the decomposition of an orthogonal polynomial into its quasi-type kernel polynomial and another orthogonal polynomial, is there any biorthogonality relation between these two decomposed polynomials?
\end{Problem}

The $g_n$'s given by \eqref{Gauss} while finding the ratio of Gaussian hypergeometric functions constitute the $g$-sequence and hence the $g$-fraction, see \cite{Chihara book}. Hence, the $\tilde{d}_n$'s given by \eqref{CF coeffcients} for the ratio related to the Laguerre polynomials and the $e_n$'s given by \eqref{CF-Jacobi-chain-coefficients} for the ratio related to the Jacobi polynomials lead to the study of chain sequences \cite{Chihara book}. In fact, the sequence $\displaystyle\left\{\dfrac{\lambda_{n+1}}{c_nc_{n+1}}\right\}$ obtained from the TTRR \eqref{TTRR} is a chain sequence, for $c_n>0$, $n\geq 1$. A sequence $\{l_n\}$ that satisfies $l_n=(1-g_{n-1})g_n$, $n\geq 1$ is a positive chain sequence, where the $g_n$'s are called parameter sequence with $0\leq g_0<1$ and $0<g_n<1$ for $n\geq 1$ \cite{Chihara book}. Hence, given $c_n$, using this parameter sequence, we can find $\lambda_n$ and hence the TTRR \eqref{TTRR} can be formed and the sequence of orthogonal polynomials can be extracted for the given moment functional ${\mathcal{L}}$.  Further, the parameter sequence $\{g_n\}$ is called minimal parameter sequence and denoted by $\{m_n\}$, if $g_0: =m_0=0$. In fact, every parameter sequence has a minimal parameter sequence \cite[p.91-92]{Chihara book}. The sequence $\{M_n\}$ is called the maximal parameter sequence for the fixed chain sequence $\{l_n\}$, where
\begin{align*}
M_n = inf\{ g_n, \, \, {\mbox{for each}} \,  n, \{g_k\}\in {\mathcal{G}}\},
\end{align*}
with ${\mathcal{G}}$ to be the set of all parameter sequence $\{g_k\}$ of $\{l_n\}$.
If $m_n=M_n$, then the parameter sequence is unique and the chain sequence $\{l_n\}$ is called the Single Parameter Positive Chain Sequence or SPPCS in short.
For the details of this terminology, we refer to \cite{Chihara book, Costa_Ranga_OPUC and chain sequence_JAT_2013} and for recent results in this direction, we refer to \cite{2022_PAMS_Ranga_ChainSeq}.

Note that chain sequences are useful in studying various properties of the corresponding orthogonal polynomials including the moment problems. In this context, it would be useful, if it is a SPPCS. In case the chain sequence is not SPPCS, there are many ways of finding a SPPCS and one such method is given in \cite{2016-KiranRangaSwami-Symmetry-CCS}, where given a chain sequence $\{l_n\}$ the complementary chain sequence $\{k_n\}$ is defined as $k_n:=1-l_n$. It was established in \cite{2016-KiranRangaSwami-Symmetry-CCS} that either $\{l_n\}$ or $\{k_n\}$ must be a SPPCS. Hence we end this manuscript with the following problem which would provide an interesting future research in this direction.

\begin{Problem}
To find the nature of the SPPCS related to the sequences given by \eqref{CF coeffcients} and \eqref{CF-Jacobi-chain-coefficients} and its significance in studying the properties of the corresponding orthogonal polynomials.
\end{Problem}

\textbf{Acknowledgement}: The work of the second author is supported by the project No. CRG/2019/00200/MS of Science and Engineering Research Board, Department of Science and Technology, New Delhi, India.



\end{document}